\documentclass[11pt,leqno]{amsart}
\usepackage{amsthm,amsfonts,amssymb,amsmath,oldgerm}
\usepackage{graphicx}
\usepackage{makecell}
\numberwithin{equation}{section}
\usepackage{fullpage}

\allowdisplaybreaks[2]

\newcommand{\Z}{\mathbb Z}


\renewcommand\d{\partial}

\def\g{\gamma}

\def\eps{\varepsilon }

\newcommand\tRR{\widetilde{\mathbb R}}

\renewcommand\d{\partial}

\newcommand\R{\mathbb R}
\newcommand\C{\mathbb C}

\def\g{\gamma}

\def\eps{\varepsilon}


\newcommand\br{\begin{remark}}
\newcommand\er{\end{remark}}
\newcommand\bp{\begin{pmatrix}}
\newcommand\ep{\end{pmatrix}}
\newcommand{\be}{\begin{equation}}
\newcommand{\ee}{\end{equation}}
\newcommand\ba{\begin{equation}\begin{aligned}}
\newcommand\ea{\end{aligned}\end{equation}}
\newcommand\ds{\displaystyle}


\newcommand{\bap}{\begin{app}}
\newcommand{\eap}{\end{app}}
\newcommand{\begs}{\begin{exams}}
\newcommand{\eegs}{\end{exams}}
\newcommand{\beg}{\begin{example}}
\newcommand{\eeg}{\end{example}}
\newcommand{\bpr}{\begin{proposition}}
\newcommand{\epr}{\end{proposition}}
\newcommand{\bt}{\begin{theorem}}
\newcommand{\et}{\end{theorem}}
\newcommand{\bc}{\begin{corollary}}
\newcommand{\ec}{\end{corollary}}
\newcommand{\bl}{\begin{lemma}}
\newcommand{\el}{\end{lemma}}
\newcommand{\bd}{\begin{definition}}
\newcommand{\ed}{\end{definition}}
\newcommand{\brs}{\begin{remarks}}
\newcommand{\ers}{\end{remarks}}



\newcommand{\NN}{{\mathbb N}}

\newcommand{\const}{\text{\rm constant}}
\newcommand{\Id}{{\rm Id }}

\newtheorem{theorem}{Theorem}[section]
\newtheorem{proposition}[theorem]{Proposition}
\newtheorem{corollary}[theorem]{Corollary}
\newtheorem{lemma}[theorem]{Lemma}

\theoremstyle{remark}
\newtheorem{remark}[theorem]{Remark}
\theoremstyle{definition}
\newtheorem{definition}[theorem]{Definition}
\newtheorem{assumption}[theorem]{Assumption}

\newtheorem{example}[theorem]{Example}




\newcommand{\RM}{\mathbb{R}}
\newcommand{\ZM}{\mathbb{Z}}

\newcommand{\CM}{\mathbb{C}}
\newcommand{\NM}{\mathbb{N}}









\newcommand{\beq}{\begin{equation}}
\newcommand{\eeq}{\end{equation}}




\title{Spectral stability of inviscid roll waves}
\author{ Mathew A. Johnson}
\address{University of Kansas, Lawrence, KS 66045}
\email{matjohn@ku.edu}
\thanks{Research of M.J. was partially supported under NSF grant no. DMS-1614785.}

\author{Pascal Noble}
\address{Institut de Math\'ematiques de Toulouse, UMR5219, Universit\'e de Toulouse, CNRS, INSA, F-31077 Toulouse, France}
\email{pascal.noble@math.univ-toulouse.fr}
\thanks{Research of P.N. was partially supported by the French ANR Project BoND ANR-13-BS01-0009-01}

\author{L.~Miguel Rodrigues}
\address{Univ Rennes, CNRS, IRMAR - UMR 6625, F-35000 Rennes, France}
\email{luis-miguel.rodrigues@univ-rennes1.fr}
\thanks{Research of M.R. was partially supported by the French ANR Project BoND ANR-13-BS01-0009-01 and the city of Rennes}

\author{Zhao Yang}
\address{Indiana University, Bloomington, IN 47405}
\email{yangzha@indiana.edu}
\thanks{Research of Z.Y. was partially supported by the Hazel King Thompson Summer Reading Fellowship}

\author{Kevin Zumbrun}
\address{Indiana University, Bloomington, IN 47405}
\email{kzumbrun@indiana.edu}
\thanks{Research of K.Z. was partially supported
under NSF grant no. DMS-1400555 and DMS-1700279}

\begin{document}

\begin{abstract}
We carry out a systematic analytical and numerical study of spectral stability of discontinuous roll wave solutions of the inviscid Saint Venant equations, based on a periodic Evans-Lopatinsky determinant analogous to the periodic Evans function of Gardner in the (smooth) viscous case, obtaining a complete spectral stability diagram useful in hydraulic engineering and related applications. In particular, we obtain an explicit low-frequency stability boundary, which, moreover, matches closely with its (numerically-determined) counterpart in the viscous case. This is seen to be related to but not implied by the associated formal first-order Whitham modulation equations.
\end{abstract}
\date{\today}
\maketitle

{\it Keywords}: shallow water equations; roll waves; stability of periodic waves; Evans-Lopatinsky determinant; modulation equations; hyperbolic balance laws.

{\it 2010 MSC}:  35B35, 35L67, 35Q35, 35P15.

\tableofcontents

\section{Introduction}\label{s:introduction}

In this paper, we study the spectral stability of periodic ``roll wave'' solutions of the inviscid St.~Venant equations
\ba\label{sv_intro}
\d_th+\d_x q&=0, \\
\d_t q +\d_{x}\left(\frac{q^2}{h}+\frac{h^2}{2F^2}\right)&=h-\frac{|q|\,q}{h^2}
\ea
modeling inclined shallow water flow.  Here, $t$ and $x$ denote elapsed time and spatial location along the incline, $h$ and $q=hu$ denote the fluid height and total flux at point $(x,t)$, $u$ is a vertically averaged velocity, and
$
\frac{h^2}{2F^2}$
is an effective fluid-dynamical force analogous to pressure in compressible flow\footnote{Indeed, the lefthand side of \eqref{sv_intro} may be recognized as the equations of isentropic compressible gas dynamics, with $h$ and $q$ playing the roles of density and momentum, and a polytropic ($\gamma=2$) equation of state.}. The parameter $F$ is a nondimensional {\it Froude number}, defined explicitly as
$F=\frac{U_0}{ \sqrt{H_0 g \cos \theta }}$,
where $\theta$ is angle from horizontal of the incline, $g$ the gravitational constant, and $H_0$ and $U_0$ are fixed chosen reference values of height and velocity, i.e., units of measurement for $h$ and $u$. Source terms $h$ and $-|q|q/h^2=-|u|u$ on the righthand side represent opposing accelerating and resisting forces of gravity and turbulent bottom friction, the latter approximated by {\it Chezy's formula} as proportional to fluid speed squared \cite{D49,BM}.

System \eqref{sv_intro} and its viscous counterpart, obtained by including the additional term $\nu \d_x (h \d_x(q/h))$, with $\nu>0$ a non dimensional constant (the inverse of a Reynolds number), on the righthand side of \eqref{sv_intro}(ii), are both commonly used in hydraulic engineering applications, for example to model shallow fluid flow in a canal or spillway. See \cite{BM} for an interesting survey of this topic and applications, including a description of the reduction to nondimensional form \eqref{sv_intro} \cite[p. 14]{BM}.

Our particular interest here is in the phenomena of {\it roll waves} \cite{C34,Br1,Br2}, a well-known hydrodynamic instability associated  with destabilization of constant flow \cite{J25} consisting of a periodic series of shocks, or ``bores", separated by smooth monotone wave profiles, advancing down the incline with constant speed. Such waves are important due to their destructive capacity, both through overflow of a confining channel due to increased amplitude variation and through the ``water-hammer" effect of periodic shock impacts on hydraulic structures \cite[p.149]{D49}, \cite[p.1]{BM}, \cite[p.7]{Huang}. This motivates the study of both the {existence}, and, as the physical selection mechanism determining naturally occurring amplitudes and frequencies,
dynamical {\it stability} of roll waves.

The existence problem is by now well understood. Existence of roll waves for \eqref{sv_intro} was established by Dressler by exact solution \cite{D49}, and for the viscous version of \eqref{sv_intro} by H\"arterich \cite{H03} using singular perturbation analysis in the limit as $\nu\to 0^+$.
See also \cite{BJNRZ2} for existence of a full family of small-amplitude viscous long waves in the near-onset regime $F>2$, $|F-2|\ll1$, by a Bogdanev-Takens bifurcation from cnoidal waves of the Korteweg-de Vries equation and
\cite{NM} for existence of
nearly harmonic
small-amplitude roll waves for general viscosity by a Hopf bifurcation analysis
from constant states.
In the inviscid case, it is known that there exists a 3-parameter family of 
%
wave profiles, 
parametrized by the Froude number $F>2$ and parameters $H_s>0$ and $H_-\in(H_{min}(F,H_s),H_s)$, where $H_s$ and $H_-$ denote, respectively, the fluid heights at a distinguished ``sonic" point
(where wave profile equations are degenerate)
and at the left endpoint of the (monotone increasing) continuous profiles separating jumps; see Figures~\ref{fig:roll}(a)-(b) for a typical solution and physical example.\footnote{The latter reproduced from \cite{C34}.} By scaling invariance, this may be reduced to a 2-parameter family, taking without loss of generality $H_s\equiv 1$. See Section~\ref{s:existence} for details. In the viscous case,
for each fixed $\nu$
there is likewise a 3-parameter family of waves, converging as $\nu\to 0^+$ to matched asymptotic expansions of the inviscid profiles \cite{H03}.

The stability problem, by contrast, despite substantial results in various asymptotic limits, remains on the whole somewhat mysterious, especially in the medium-to-large Froude number regime $2.5 \lessapprox F\lessapprox 20$ that is relevant to hydraulic engineering applications \cite{J25,Br1,Br2,A,RG1,RG2,FSMA}. As described, e.g., in \cite{YK,YKH,Kr92,BM,BN,BJNRZ2}, the near-onset regime, $F>2$, $|F-2|\ll 1$, in the viscous case is well-described by a weakly nonlinear ``amplitude equation" consisting of a singularly perturbed Korteweg-de Vries equation modified by Kuramoto-Sivashinsky diffusion, for which stability boundaries may be explicitly determined in terms of integrals of elliptic functions. This formal description (in which viscosity plays an important role), has at this point been rigorously validated at the linear and nonlinear level in \cite{JNRZ12,JNRZ13,B,BJNRZ2}, in part using rigorous computer-assisted proof. However, numerical investigations of \cite{BJNRZ2} indicate that its regime of validity is limited to approximately $2<F\lessapprox 2.3$, which is outside the regime of interest for typical hydraulic engineering applications.
\begin{figure}
\begin{center}
$
\begin{array}{lr}
(a) \includegraphics[height=5cm,width=8cm]{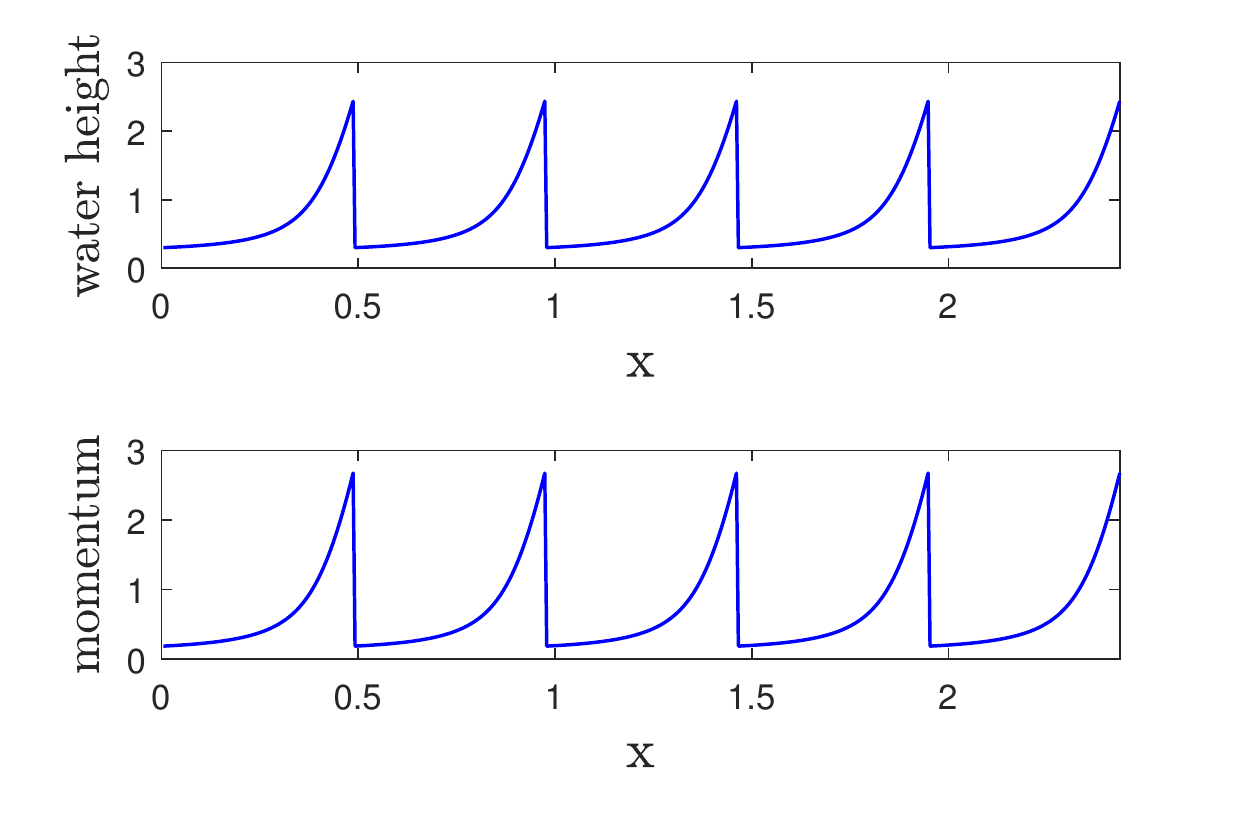}
   & (b)\ \includegraphics[height=6cm,width=6cm]{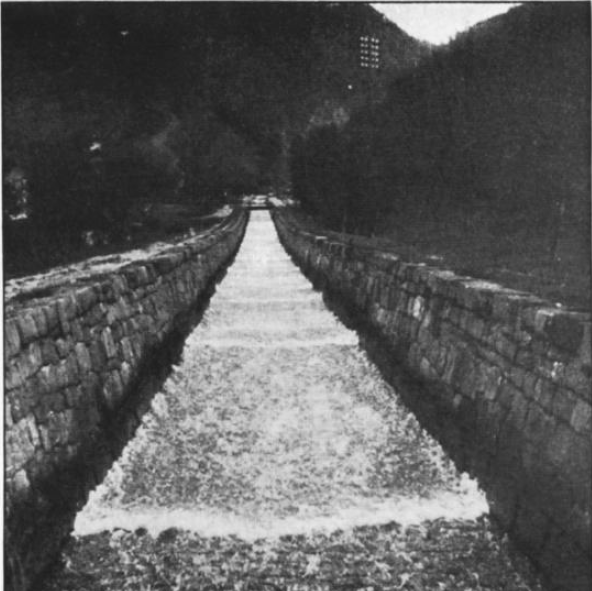}
\end{array}
$
\end{center}
\caption{(a) Simulated Dressler roll waves. (b) Roll waves in the Grunnbach conduit.}
\label{fig:roll}
\end{figure}

Away from onset, a standard approach in pattern formation is to replace the weakly nonlinear amplitude equation approximation by a formal Whitham modulation expansion \cite{W}, a multiscale expansion in similar spirit, but built around variations in the manifold of periodic solutions rather than linear perturbations of a constant state. In contrast to the near-onset case, where
the full spectral stability is proved to be encoded in a relevant weakly-nonlinear amplitude equation,
such Whitham expansions yield only
%
stability information that is low-frequency on wave parameters, in particular low-Floquet or {\it side-band} for the original equations.
See \cite{NR} for a precise discussion of what may rigorously deduced from this kind of approach, including higher-order versions thereof.
Formal analyses of this general type were carried out by Tamada and Tougou in \cite{TT,T} to obtain formal
side-band
stability criteria.  
More recently, Boudlal and Liapidevskii have proposed a different formal necessary stability condition based on direct spatial averaging \cite{BL02}, presumably also related to  (but not necessarily limited to) low-frequency stability.

In a different direction, Jin and Katsoulakis \cite{JK} have carried out weakly nonlinear asymptotics in the high-frequency limit, obtaining an amplitude equation consisting of a negatively damped Burgers equation, from which one may conclude formally instability of roll waves with sufficiently small period. Working by very different techniques, based on a direct linearized eigenvalue analysis, and WKB approximation, Noble \cite{N1,N2} has obtained complementary {\it high-frequency} stability criteria, from which he was able 
to conclude, for waves with sufficiently large period, high-frequency stability, that is, stability with respect to perturbations of sufficiently high 
time 
frequency. However, in the large-Froude number regime,
(i) the various low-frequency criteria do not agree, leading to confusion as to what precisely they capture;
(ii) the high-frequency condition of Noble, though theoretically conclusive, is difficult to analyze outside of the large-period limit computed by Noble; and
(iii) up to now, except in the unstable high-frequency limit studied by Jin-Katsoulakis, complete stability information has been given for no single wave away from onset. In particular,
for any $F$ that is not close to $2$,
{\it stability has not been verified for any roll wave solution of \eqref{sv_intro}.}

In summary, the stability theory for large-Froude number roll waves remains far from clear, consisting of disparate, mainly formal, pieces with no unified whole. Our goal in this paper is to shed light on the situation by a systematic investigation combining rigorous analysis of the exact eigenvalue equations with numerical investigation {\it to obtain a complete spectral stability diagram for the family of discontinuous roll wave solutions of the inviscid system \eqref{sv_intro}}, and at the same time determining a precise connection between low-frequency stability and the formal Whitham modulation equations.

\subsection{Viscous stability, Whitham equations, and the Evans-Lopatinsky determinant}\label{s:lop2}

\subsubsection*{Viscous stability} Our main impetus for the present \emph{inviscid} stability analysis is the recent stability analysis carried out away from onset in \cite{NR,JNRZ13,RZ,BJNRZ2} for smooth roll wave solutions of the viscous version of the Saint Venant equations \eqref{sv_intro}.
There, it was shown rigorously that (i) nonlinear modulated stability follows from diffusive spectral stability; (ii) in stable cases associated second-order formal Whitham modulation systems provide accurate large-time approximations; and (iii) in any case (stable or unstable),
%
spectral perturbations of neutral modes with respect to Floquet/Bloch frequency
agree to second order with those predicted by an associated formal
second-order
Whitham modulation system. Finally, the spectrum was approximated numerically for the linearized problem about the waves using the periodic Evans function $D(\lambda, \xi)$, an analytic function introduced by Gardner \cite{G} whose zeros correspond to spectra $\lambda$ and associated Bloch/Floquet numbers $\xi$; see \cite{BJNRZ2} for further description.

The surprising result in the viscous case was that, away from onset, i.e., for $F\gtrapprox 2.5$, the stability diagram has a different but equally simple power-law description from the one near onset, with stable wave parameters corresponding to the region within a lens-shaped region bounded by an upper and a lower stability boundary. This observation was obtained purely numerically in \cite{BJNRZ2}, with
no explanation of any kind, not even formal.
However, there it was noted that in the region of stability, wave profiles seemed to converge to discontinuous ``Dressler-wave" solutions for large values of $F$, suggesting that an explanation of the observed behavior might be found in the study of the inviscid equations \eqref{sv_intro} and its singular perturbation via the zero viscosity limit. Moreover, the numerical computations in the viscous case, lying in simultaneous large period and small viscosity limits, were quite delicate, with a total reported computation time exceeding 40 days (machine time) on the IU supercomputer cluster \cite[\S 9.2]{BJNRZ2}.

From the viewpoint of both physical insight and reliability/efficiency of numerical computations, these findings motivate the study of the inviscid equations as an organizing center for the observed viscous results. More, they suggest the analytical strategy that we shall follow here, of adapting to the inviscid case the tools that have proved successful in the viscous one, Whitham modulation, direct spectral expansion, Floquet-Bloch analysis, and the periodic Evans function of Gardner.

\subsubsection*{Inviscid Whitham system}
We begin by identifying a first-order Whitham modulation system analogous to that of the viscous case \cite{W,Se,NR,JNRZ13}. In the viscous case, this appears as
\begin{equation}\label{whit}
\begin{aligned}
\d_t \overline{H}+\d_x \overline{Q}&=0, \\
\d_t k - \d_x \omega&=0,
\end{aligned}
\end{equation}
where $k=1/X$ and $\omega=-kc$ are spatial and temporal wave numbers for roll wave solutions
$$
(h,q)(x,t)=(H,Q)(\omega t + kx)
$$
propagating with speed $c$, with periodic profile $(H,Q)$ of period one, upper bars denoting averages over one period. As described in \cite{Se,NR,JNRZ13}, the system \eqref{whit} may be obtained 
in the viscous case by averaging all conservative equations of the original PDE \eqref{sv_intro} (here, just the first, $h$ equation), and augmenting the resulting system with the ``eikonal equation" relating $k$ and $\omega$. For a derivation, see \cite{Se}, \cite{NR} and \cite[Appendix B.1.1]{JNRZ13}. 

Following the usual Whitham formalism \cite{W,OZ1,Se,NR,BNR,KR}, the system \eqref{whit} is expected to describe
%
critical side-band
behavior of a perturbed periodic viscous roll wave as a
low-frequency, or ``long wave''
modulation along the 2-parameter family (holding fixed the third, physical parameter $F$) of nearby viscous roll waves. Note that, indeed, since all terms are functions of wave parameters, as follows from the ODE existence theory for the viscous roll waves, we may view \eqref{whit} as a $2\times 2$ system of first-order conservation laws governing the evolution of these two parameters. This yields via a consistency argument the formal
side-band
stability condition of well-posedness, or {\it hyperbolicity} of \eqref{whit}.

Considering now the inviscid case, we note the lack of a systematic, multi-scale expansion to derive a corresponding inviscid Whitham system, due to the discontinuous ``shock" nature of the profiles (invalidating standard derivations based on smooth solutions). Nevertheless, based on the convergence of profiles proved in \cite{H03}, together with expected convergence of low-frequency behavior in the zero-viscosity limit, we propose as an inviscid Whitham system simply the same system of equations \eqref{whit}, but substituting the \emph{inviscid} dependence of $\overline{H}$, $\overline{Q}$, $k$, and $\omega$ on wave parameters $(H_-,H_s)$. While this is merely an optimistic guess at this point, we will justify this system in Section~\ref{s:comparison} by a direct comparison with the associated spectral problem.  
We note that a remarkable feature of the system~\eqref{whit} associated with \eqref{sv_intro} is that all of its coefficients may be given \emph{explicitly},
since the wave profiles may themselves be explicitly given as the resolution of a scalar ODE with a rational vector field

For comparison, we present also the alternative low-frequency model proposed in \cite{BL02}:
\ba\label{av}
\d_t \overline{H}+\d_x \overline{Q}&=0, \\
\d_t \overline{Q}+\d_{x}\left(\overline{
\frac{Q^2}{H}+
\frac{H^2}{2F^2}}\right)&=0.
\ea
This model was proposed based on the observation that the average of undifferentiated terms, being equal to the jump across the shock in differentiated quantities, must vanish by the Rankine-Hugoniot jump condition at the shock. Here, again, \eqref{av} is to be interpreted as a $2\times 2$ system of conservation laws determining the evolution of wave parameters, with hyperbolicity corresponding to
some necessary condition for spectral stability.
Like \eqref{whit}, the system \eqref{av} was proposed on heuristic grounds rather than systematic 
expansion, hence likewise requires validation by external means. To rigorously justify/compare such heuristic 
stability predictions is an important tangential goal of our investigations. We show in Section \ref{s:comparison} that \eqref{av} despite its intuitive appeal does {\it not} accurately predict stability,
side-band
or otherwise, giving rise to false negative and positive results, and in some cases wrongly predicting instability of (globally) linearly stable waves.
{\it That is, the averaged system \eqref{whit} suggested by the Whitham formalism accurately predicts low-frequency stability also in the inviscid case, while the seemingly similar averaged system \eqref{av} does not successfully predict stability on any scale.}

\subsubsection*{Periodic Lopatinsky determinant}\label{s:lop}
Our primary approach to the stability study of inviscid roll waves, bypassing issues of formal approximation as described above, is, following \cite{N1,N2}, to work directly with the exact eigenvalue equations for the linearization of \eqref{sv_intro} about the wave. Using the rigorous abstract conclusions so obtained, we then attempt to deduce useful approximations and to evaluate the validity of proposed formal stability criteria such as, for example, hyperbolicity of modulation systems \eqref{whit} and \eqref{av}. Our main contributions beyond what was done in \cite{N1,N2} are the characterization of normal modes as spectra in the usual sense of an appropriate linear operator, clarifying the connection of the framework of \cite{N1,N2} to standard Floquet/Bloch theory,\footnote{In particular, bearing on the issue of ``completeness'' of normal modes solutions.} and the introduction of a {\it stability function} or periodic Evans-Lopatinsky determinant analogous to the periodic Evans function of Gardner \cite{G} in the viscous case. The latter proves to be extremely useful for both numerical and analytical computations; indeed, it is the central object in our development.

Here, we give a quick heuristic derivation of the stability function and associated eigenvalue equations. A rigorous derivation is given in Section \ref{s:evans-lop}. Consider a general system of balance laws
\be\label{ab}
\d_tw+\d_x(F(w))=R(w),
\ee
with $w$ valued in $\R^n$, smooth except at a sequence of shock positions $x_j=x_j(t)$, satisfying the Rankine-Hugoniot jump conditions at the $x_j$
\be\label{rh}
x_j'(t)[w]_j=[F(w)]_j
\ee
where $[h]_j:= h(x_j^+)-h(x_j^-)$. Let us assume that $W$ is a $X$-periodic traveling wave solution of \eqref{ab}, with 
wave speed $c$ and 
a single shock by period. 
Working in a co-moving coordinate frame turns $W$ in a stationary solution, with shocks located at $X_j=jX$.
Then, in this frame, 
formally linearizing \eqref{ab} about $W$ (i.e., taking a pointwise, or Gateaux, derivative not required to be uniform in $x,t$), we obtain the ``interior equations''
\be\label{abint}
\d_tv+ \d_x(Av)=Ev \quad\textrm{on}\quad\tRR\,:=\,\bigcup_{j\in Z} (jX, (j+1)X),
\ee
where here
$$
A(x):=d_wF(W(x))
-c\Id
\qquad\textrm{and}\qquad
E(x):=d_wR(W(x)),$$
while at the shock locations $X_j=jX$ we obtain linearized jump conditions
\be\label{abjump}
y_j'\,[W]_j=y_j\,[A\,W']_j+ [Av]_j,
\ee
with $v$ and $y_j$ now denoting perturbations in $w$ and $x_j$, where the second term in \eqref{abjump} is obtained due to displacement $y_j$ in the location of the jump (error terms by $H^1$ extension remaining of higher order so long as solutions remain bounded in $H^1$ and $y_j$ remains sufficiently small).

As the system \eqref{abint}-\eqref{abjump} has $X$-periodic coefficients, it may be analyzed via Floquet theory, i.e., through the use of the Bloch-Laplace transform.  Precisely, decomposing solutions of \eqref{abint}-\eqref{abjump} into ``normal mode" solutions of the form
\be\label{Bloch}
v(x,t)=e^{\lambda t}e^{i\xi x}\check{v}(x) ,\quad  y_j(t)= \chi e^{\lambda t}e^{i\xi (j-1)X},
\qquad \lambda \in \C,\quad\xi\in [-\pi/X,\pi/X],
\ee
with $\check{v}(\cdot)$ $X$-periodic and $\chi$ constant, and setting $w(x)=\check{v}(x)e^{i\xi x}$, we use the quasi-periodic structure in \eqref{Bloch} to reduce the spectral problem to a one-parameter family of Floquet eigenvalue systems
\ba\label{eval}
(Aw)'=(E-\lambda \Id) w\textrm{ on }(0,X),\\
\chi (\lambda \{W\} - \{AW'\})=\{Aw\}_\xi,\\
\ea
parametrized by $\xi\in[-\pi/X,\pi/X]$, where here $'$ denotes $\d_x$. Here above we have used periodic jump notation
\be\label{jumpnote}
\{f\}:= f|_0^X,\qquad
\{f\}_\xi:= f(X^-)- e^{i\xi X}f(0^+)\,.
\ee
Note that this reduces the whole-line problem \eqref{abint}-\eqref{abjump} to consideration of a family of problems posed on a single periodic cell $(0,X)$.

As a result, the $X$-periodic wave $W$ is said to be (spectrally) \emph{unstable} if there exists a $\xi\in[-\pi/X,\pi/X]$ such that the eigenvalue problem \eqref{eval} has a non-trivial solution $w$ for some $\lambda\in\CM$ with $\Re(\lambda)>0$. Otherwise, the wave $W$ is said to be (spectrally) \emph{stable}. For details, see Section~\ref{s:evans-lop} below.

By stationarity of $W$, we have 
$AW'=(F(W)-cW)'=R(W)$. 
Thus we find that existence of a non-trivial solution to the Floquet eigenvalue problem is equivalent to linear dependence of $\{\lambda W -R(W)\} $ and $\{Aw\}_\xi$, for some solution $w$ to \eqref{eval}(i) or, equivalently under Assumption~\ref{local} from Section~\ref{s:evans-lop}, vanishing of the determinant
\be\label{det}
\Delta(\lambda,\xi):=
\det \bp \{\lambda W -R(W)\} & \{Aw_1\}_\xi&\dots & \{Aw_{n-1}\}_\xi \ep \, ,
\ee
$\lambda \in \C$, $\xi \in [-\pi/X,\pi/X]$, where $w_1,\dots, w_{n-1}$ form a basis of solutions of interior eigenvalue equation \eqref{eval}(i). Here in the abstract discussion we are taking for granted minimal degeneracy of the structure of interior equations, responsible for a loss of one dimension in the space of solutions,
and existence of an analytic in $\lambda$ choice of basis 
$w_1,\ldots,w_{n-1}$;
see Assumptions~\ref{local}-\ref{extension} below. 
With this respect, note that a consequence of the periodic single-shock structure together with the Lax characteristic condition, already observed in \cite{N1,N2}, is the presence of one sonic, or characteristic, point of $F$ in $W$, that is, existence of a singular point of \eqref{eval}(i); see Section~\ref{s:existence}. We refer the reader to Section~\ref{s:evans-lop} and Appendix~\ref{s:local} for further details and in particular proofs that 
on one hand the underlying Assumptions~\ref{local}-\ref{extension} hold if each periodic cell contains only one singular point and this singular point is a regular-singular point, and on the other hand this scenario includes the case of System~\eqref{sv_intro}.

\begin{definition}\label{specdef}
	We call $\Delta$ the\footnote{Note that we slightly abuse the terminology here, since ``the" determinant is not canonically defined but depends on the choice of basis $w_1,\dots, w_{n-1}$ 
	in Assumption~\ref{extension} below. However, this
	does not affect our notion of spectra or stability.}
	{\it periodic Evans-Lopatinsky determinant}, or ``stability function'' for $W$. We define the {\it nonstable spectrum} of $W$ as the set of roots $\lambda$ with $\Re \lambda >0 $ of $\Delta(\cdot, \xi)$ for some $\xi \in [-\pi/X,\pi/X]$. Likewise, we define {\it spectral stability} of $W$ as the absence of zeros of $\Delta$ with $\Re \lambda > 0$.
\end{definition}

In Section~\ref{s:evans-lop}, we show by a study of the associated resolvent equation that the spectrum as defined above agrees with the usual $H^1(\tRR)\times \ell^2(\Z)$-spectrum of the linearized problem about $W$.

\br\label{comprmk}
For comparison, the periodic Evans function of Gardner may be written as
$$
D(\lambda,\xi):= \det \bp \{Aw_1 \}_\xi & \dots & \{Aw_n \}_\xi\ep,
$$
where $w_1, \dots, w_n$ are a basis of solutions to the eigenvalue ODE. Meanwhile, the Evans-Lopatinsky determinant associated with a detonation-type solution of \eqref{ab}, featuring a single shock discontinuity at $x=0$, appears \cite{E,JLW,Z1}, with $\{h\}$ now denoting $h|^{+\infty}_{-\infty}$, as
$$
\delta(\lambda):= \det \bp \{\lambda W -R(W)\} & Aw_1(0^+)&\dots & Aw_{j_0}(0^+)&Aw_{j_0+1}(0^-)&\dots & Aw_{n-1}(0^-)\ep,
$$
where $w_1$, $\dots$, $w_{j_0}$ are functions on $(0,\infty)$ that form a basis of solutions to the eigenvalue ODE decaying to zero at $\infty$ and $w_{j_0+1}$, $\dots$, $w_{n-1}$ are functions on $(-\infty,0)$ that form a basis of solutions to the eigenvalue ODE decaying to zero at $-\infty$. In particular, the periodic Evans-Lopatinsky determinant interpolates between the periodic Evans function of the viscous case and the Lopatinsky determinant of inviscid shock/detonation theory.
\er

\subsection{Main results}\label{s:results}
We now describe our main results, both analytical and numerical.

\subsubsection*{Low-frequency expansion and Whitham modulation equations}\label{s:lf}
Let $W$ be an $X$-periodic roll wave solution of \eqref{sv_intro} with speed $c$. Following Section~\ref{s:lop}, we make the change of coordinates $x\to x-ct$ to a co-moving frame in which $W$ is stationary, and define the periodic Evans-Lopatinsky determinant $\Delta(\lambda,\xi)$ as in \eqref{det}.  Our first observation is that $\Delta$ possesses the following surprisingly special structure.

\bpr\label{factor}
$\lambda=0$ is a root of $\Delta(\cdot,\xi)$ for all $\xi\in [-\pi/X,\pi/X]$.
Equivalently, $\Delta$ factors as
\be\label{fact}
\Delta(\lambda,\xi)=\lambda \hat \Delta(\lambda, \xi)
\ee
for all $\lambda\in\CM$ and $\xi\in[-\pi/X,\pi/X]$, where $\hat \Delta$ is an analytic function in both $\lambda$ and $\xi$. Moreover, there exist coefficients $\alpha_0$ and $\alpha_1$ such that uniformly in $\xi\in[-\pi/X,\pi/X]$,
\be\label{fact2}
\hat\Delta(\lambda,\xi)
=\alpha_0\,\lambda+\frac{e^{i\xi X}-1}{iX}\,\alpha_1\,+O(|\lambda|\,(|\lambda|+|\xi|))\,.
\ee
\epr

In particular in \eqref{fact2}
$$
\alpha_0=\d_\lambda\hat\Delta(0,0)\,,\qquad\qquad
\alpha_1=\d_\xi\hat\Delta(0,0)=-\frac{iX}{2}\hat\Delta(0,\pi/X)\,,
$$
and, for any $\xi\in[-\pi/X,\pi/X]$,
$$
\hat\Delta(0,\xi)
=\frac{e^{i\xi X}-1}{iX}\,\alpha_1\,.
$$

From \eqref{fact}-\eqref{fact2}, we obtain quite detailed information on low-frequency spectral expansions.

\bc\label{alternative}
$\lambda=0$ is a double root of $\Delta(\cdot,\xi)$ for some non zero $\xi\in[-\pi/X,\pi/X]$ if and only if it is a double root for any $\xi\in[-\pi/X,\pi/X]$.
\ec

\bc\label{exp}
Assume that $\Delta(\cdot,0)$ has a root of multiplicity exactly two at $\lambda=0$, i.e., $\alpha_0\neq 0$. In particular, there are two spectral curves $\lambda_j(\cdot)$, $j=1,2$, that are analytic in $\xi$ near $\xi=0$ and such that, for $(\lambda,\xi)$ near $(0,0)$, $\Delta(\lambda,\xi)=0$ is equivalent to $\lambda=\lambda_j(\xi)$ for some 
$j\in \{1,2\}$, where
\be\label{specexp}
\lambda_1(\xi)=- i\xi \alpha \big( 1 - i\xi \gamma +O(\xi^2)\big),
\qquad
\lambda_2 (\xi)\equiv 0 \quad \hbox{\rm for $|\xi|$ sufficiently small,}
\ee
%
for some real constants $\alpha,\gamma\in\RM$. Moreover, $\alpha$ is obtained from \eqref{fact2} by
\begin{equation}\label{alpha}
\alpha=\frac{\alpha_1}{i\alpha_0}
=\frac{\d_\xi\hat \Delta(0,0)}{i\d_\lambda\hat \Delta(0,0)}
=-\frac{X \hat \Delta(0,\pi /X)}{2 \d_\lambda\hat \Delta(0,0)}.
\end{equation}
\ec

Our numerical investigations suggest that $\alpha_0$ never vanishes so that indeed expansions \eqref{specexp} are directly relevant everywhere. The first-order expansions of $\lambda_j(\xi)$ by \eqref{specexp} are purely imaginary, or neutral, hence {\it do not yield directly low-frequency stability information}. However, the special structure of \eqref{specexp} allows us to conclude that second-order low-frequency stability information does depend on the sign of the first-order coefficient. Indeed, the second-order term $-\alpha \gamma \xi^2$ in $\lambda_1(\xi)$, determining low-frequency stability or instability according as $\Re \alpha \gamma>0$ or $<0$, has sign depending on that of the first-order coefficient $\alpha$. Moreover the term $\gamma$ in \eqref{specexp} is found numerically not to vanish on the closure of the set of stable waves, hence numerical evidence that {\it the transition stability/instability for roll waves through low-frequency stability boundary is the curve in parameter space on which $\alpha=0$.}
\br
The above argument relies partly on numerical observations. However it is consistent with the intuition that since when $\alpha=0$ a full loop of spectrum is present at $\lambda=0$ we expect that a full loop passes through zero when $\alpha$ changes sign; see Figure~\ref{spectrum}(a). In contrast when only the curvature changes sign, see Figure~\ref{spectrum}(b), one expects that some 
medium-frequency 
part of the spectrum, far from zero, has already gone a while ago from the stable to the unstable half-plane; see Figure~\ref{medfrespectrum}. We will give another independent supporting argument below, based on a subharmonic stability index, that does not require information on $\gamma$ and proves that when $\alpha$ changes sign the number of real positive roots of $\Delta(\cdot,\pi/X)$ changes.
\er
Recall {\it Serre's Lemma} \cite{Se,NR,JNRZ13} in the viscous case, that the characteristics of the first-order Whitham modulation system, after a coordinate change $x\to x-ct$ to comoving frame (taking characteristics $\tilde \alpha_j$ to $\tilde \alpha_j-c$), agree with the coefficients of the first-order expansion at $(\xi, \lambda) =(0,0)$ of the exact spectrum of the linearized operator about the wave, encoding
side-band
behavior. 
For a general first-order system
$f^0(w)_t + f(w)_x=0$, $w\in \R^n$, define the associated {\it dispersion function }
$$D^W(\lambda, \xi):=\det \left(\lambda \frac{df^0}{dw} +i\xi \frac{df}{dw}\right),$$
an $n$-degree homogeneous polynomial in $\xi$, $\lambda$.  The roots $\lambda=\lambda_j(\xi)$ of $D^W(\lambda,\xi)=0$ are then the dispersion
relations associated with the system at a given background point $w$ (typically not mentioned), themselves first-order homogeneous
The following results give a generalization to the inviscid case, {\it rigorously justifying the formal Whitham modulation system \eqref{whit} as a predictor of low-frequency behavior.}

\bpr[Inviscid Serre's Lemma]\label{serreprop}
There exists an explicit non zero constant $\Gamma_0$ such that as $\lambda\to0$, uniformly in $\xi\in[-\pi/X,\pi/X]$,
$$
\Delta(\lambda,\xi)
\,=\,
\Gamma_0\ 
D^W\left(\lambda-ic\frac{e^{i\xi X}-1}{iX},\frac{e^{i\xi X}-1}{iX}\right)
+O(|\lambda|^2\,(|\lambda|+|\xi|))
$$
where $D^W$ denotes the dispersion 
function
associated with system~\eqref{whit}.
\epr

\bc\label{calculable}
Coefficients $\alpha_0$ and $\alpha_1$ in \eqref{fact2}, thus also $\alpha$ in  \eqref{specexp}, are explicitly calculable\footnote{Daunting, though. See \eqref{alpha_1} for the expression of $\alpha_1$, which is by far the nicest one.}.
\ec

\bc\label{doubleroot}
$\lambda=0$ is exactly a double root of $\Delta(\cdot,0)$ if and only if System~\eqref{whit} is evolutionary.
\ec

\bc\label{groupvelocities}
Assume that $0$ is a double root of $\Delta(\cdot,0)$. Then the characteristics of \eqref{whit} are given by the values $c+ \alpha$ and $c$, with $\alpha$ as in \eqref{alpha}, or, after a change $x\to x-ct$ to comoving coordinates, the first-order coefficients $\alpha$ and $0$ of $\lambda_1$ and $\lambda_2$, respectively, in \eqref{specexp}.
\ec

\br
Taylor expanding $\frac{e^{i\xi X}-1}{iX},\frac{e^{i\xi X}-1}{iX}$ for small $\xi$, we obtain the simpler but less detailed formulation
$\Delta(\lambda,\xi) = \Gamma_0\ D^W( \lambda-ic \xi , \xi ) +O( (|\lambda|+|\xi|)^3) $ familiar from \cite{Se}, etc.
\er

\br Though valid, {\it the first-order Whitham modulation system does not directly yield instability information}, being hyperbolic except on the boundary $\alpha=0$, where, numerically, hyperbolicity is seen to fail due to presence of a nontrivial Jordan block. Nevertheless, recall that due to the special structure \eqref{specexp}, it turns out that the non-zero characteristic $\alpha$ for the first order modulation system gives also second order side-band and mod-$2$ subharmonic stability information, a fact not seen at the ``first-order" level of the modulation system \eqref{whit}. This is reminiscent of the case of viscous shock theory (see, e.g., \cite[\S 6]{Z3}), where failure of a first-order low-frequency stability condition (in this case an inviscid Lopatinsky determinant), marks a boundary for full viscous stability.
\er

\subsubsection*{High- and 
medium-frequency stability indices}\label{s:index}
We now present our two main analytical results, comprising rigorous high- and low-frequency stability conclusions. The first is directly based on 
an approximate high-frequency diagonalization 
as in \cite{N1}; see \cite{Z1,Z2,LWZ1} and \cite{BJRZ,BJNRZ2} for related analyses in respectively detonation and viscous roll wave stability.

\bt[High-frequency stability criteria]\label{hfthm}
For any roll wave $W$ of \eqref{sv_intro}:\\
(i) (potential) unstable spectra has bounded real part.\\
(ii) there is a {\it stability index} $\bf I$, explicitly calculable, such that when ${\bf I}<1$ (potential) unstable spectra is bounded, whereas when ${\bf I}>1$ there is an unbounded curve of unstable Floquet eigenvalues asymptotic to $\Re \lambda=\eta$ as $|\lambda|\to \infty$, for some $\eta>0$.
\et

The high-frequency stability index ${\bf I}$ is seen numerically to be identically $<1$, across the entire parameter-range of existence. Thus, we have the important conclusion, generalizing observations of \cite{N1,N2}, that
{\it for any roll wave unstable modes have both bounded time growth rates and bounded time frequencies.} In short, we have excluded high-frequency instabilities, with the convention used above and from now on that high-, mid-, and low-frequency refer to the size of the spectral 
parameter\footnote{Note that we abuse terminology by calling $\lambda$ instead of $\Im(\lambda)$ a time frequency. Besides, note that the relevance of this distinction is in contrast with the fact that the size of spatial frequencies is not readily accessible in periodic spectral problems since in any given Floquet parameter are grouped together spatial frequencies of arbitrary large size.} $\lambda$.
Such a result is important in numerical stability investigations, truncating frequency space to a compact domain on which computations can be carried out with uniform error bounds, in a theoretically justified way.
See \cite{BJRZ,BJNRZ2} for similar bounds and their numerical use in the viscous case.
Theorem \ref{hfthm} is a corollary of the more detailed Proposition \ref{hfprop} below.

\br\label{loprmk} Intuitively, the upper bound on growth rates from Theorem \ref{hfthm}(i), associated with local well-posedness of the linearized equations, follows from the expectation that $\Delta$ converges as $\Re(\lambda)\to +\infty$ to a nonvanishing multiple of the shock Lopatinsky condition for the component shock at endpoints $jX$ of $W$, which is nonvanishing by Majda's theorem \cite{M} giving stability of shock waves for isentropic gas dynamics with any polytropic equation of state. That is, {\it well-posedness involves only the behavior near component shocks} \cite{N3}. See \cite{Z2} for a similar argument in the detonation case.
\er

The second result gives a rigorous justification of $\alpha=0$ as a potential stability boundary.

\bt[Nonoscillatory co-periodic and subharmonic instabilities]\label{lfthm}
Let $W$ be an $X$-periodic roll wave of \eqref{sv_intro} and $\alpha_0$ and $\alpha_1$ be as in \eqref{fact2} with a normalization ensuring that $\Delta(\lambda,0)$ and $\Delta(\lambda,\pi/X)$ are real when $\lambda$ is real. Assume that $\alpha_0\neq0$ and let $\alpha$ be as in \eqref{specexp}.  Then:
\begin{enumerate}
\item[(i)] The sign of $\alpha_0=\d_\lambda^2\Delta(0,0)/2$ determines the parity of the number of real positive roots of $\Delta(\cdot,0)$.
\item[(ii)] $\alpha_1$ is purely imaginary and if $\alpha_1\neq0$ then the sign of $\alpha_1/i=\d_{\lambda\xi}^2\Delta(0,0)/i$ determines the parity of the number of real positive roots of $\Delta(\cdot,\pi/X)$.
\item[(iii)] In particular if $\alpha<0$ then $W$ is spectrally unstable.
\end{enumerate}
\et

The proof follows by a global stability index computation adapting those for continuous waves, see for instance \cite{Z3,BJRZ,BMR}, noting that with the above normalization $\hat \Delta(\lambda,0)$ and $\hat \Delta(\lambda,\pi/X)$ are real-valued for $\lambda$ real, determining their signs when $\lambda$ is real and large and invoking the intermediate value theorem. Numerical observations suggest that only subharmonic instabilities occur through a change in the sign of $\alpha$, consistently with the low-frequency analysis.

\subsubsection*{Numerical stability analysis}\label{s:numres}
We complete our investigations with a full numerical stability analysis across the entire parameter-range of existence. This is performed following the general approach of \cite{BJNRZ2} in the viscous case by numerical approximation of the periodic Evans-Lopatinsky determinant combined with various root finding and tracking procedures. Additional difficulties in the present case are induced by the sonic point at which the eigenvalue ODE becomes singular; this is handled by a hybrid scheme in which the solution is approximated by series expansion in a neighborhood of the sonic point, then continued by Runga-Kutta ODE solver forward and backward to boundaries $x=0,X$. The resulting inviscid Evans-Lopatinsky solver proves to be both numerically well-conditioned and fast. See Section~\ref{s:numerics} for further details.

\begin{figure}[t]
\includegraphics[scale=0.65]{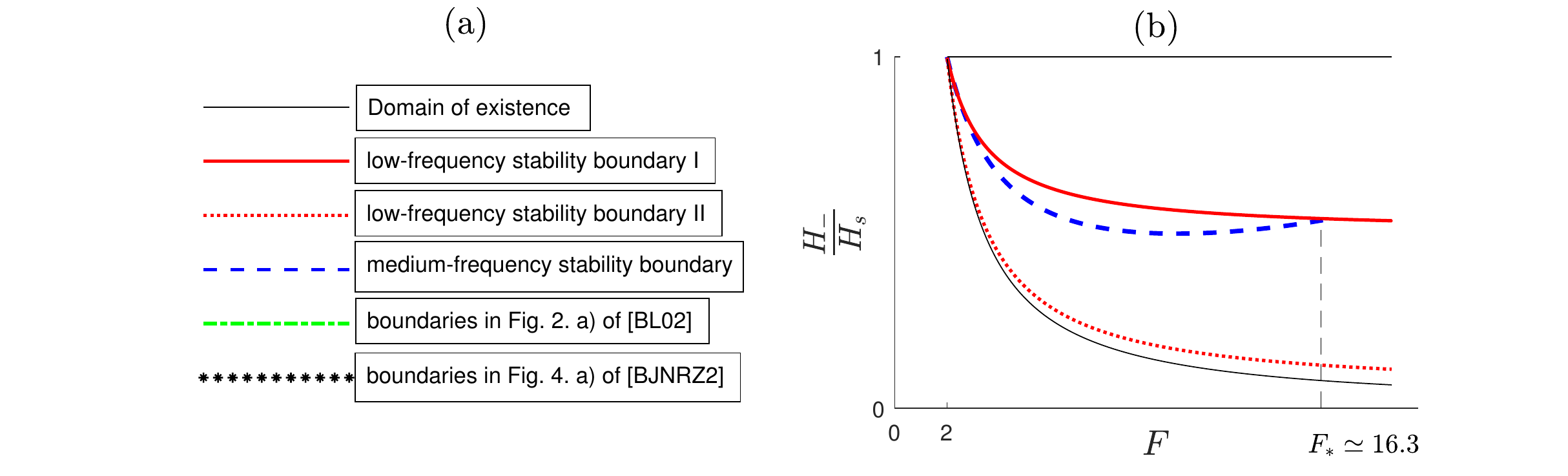}
\caption{Throughout our figures, we use the scheme in (a) for labeling the various stability boundaries. In (b), in an effort to illustrate fine details difficult to observe in the numerical results in Figure~\ref{fig_rosetta}, we present a cartoon of the stability boundaries in $H_-/H_s$ vs. $F$ coordinates.  In the actual stability boundaries in Figure~\ref{fig_rosetta}(a) below, the 
mid-frequency stability boundary is nearly indistinguishable from the lower existence boundary,
while the low-frequency boundary II seemingly asymptotes to this lower existence curve as $F$ increases, and so the detail in (b) above is invisible to the eye.
We stress in particular that large-amplitude transition to instability does not originate in 
low frequencies, 
	stability being encountered between low-frequency stability boundary I and 
the mid-frequency boundary.
We note also that relevant stability boundaries meet at $H_-/H_s\approx 0.09201$ when $F=F_*$. }
\label{fig_scheme}
\end{figure}

To interpret our numerical results, we introduce a general scheme for labeling the various stability boundaries in Figure~\ref{fig_scheme}(a). The outcome of our numerical investigations, displayed in Figure~\ref{fig_rosetta}, is a {\it complete and rather simple stability diagram for all inviscid roll wave solutions of the Saint Venant equations \eqref{sv_intro}}, an object of considerable interest in hydraulic engineering and related applications. In panel (a), given in coordinates $H_-/H_s$ versus $F$, the upper line $H_-/H_s=1$ corresponds to the small-amplitude limit while the lower curve corresponds to the large-amplitude (homoclinic) limit.
Here, the stability boundaries are labeled following the scheme in Figure~\ref{fig_scheme}(a); the region below the low-frequency boundary I, and above the mid-frequency boundary corresponds to spectrally stable roll waves; all other parameter values are spectrally unstable to either low or mid-frequency perturbations, as described in our forthcoming analysis. The numerically-determined low-frequency stability boundaries I and II agree well with the explicitly calculable boundaries
$\alpha=0$ and $\gamma=0$, a useful confirmation of numerical accuracy of the code.  In Figure~\ref{fig_scheme}(b), in an effort to emphasize the key, yet difficult to see, features in Figure~\ref{fig_rosetta}(a), we provide a cartoon version of the numerical results in Figure~\ref{fig_rosetta}(a): in Figure~\ref{fig_scheme}(b), we exaggerated the horizontal and vertical scales to emphasize the relative positions of the nearly indistinguishable stability and existence boundary curves in Figure~\ref{fig_rosetta}(a)). Panel~\ref{fig_rosetta}(b) depicts the same diagram with minimum wave height $H_-/H_s$ replaced by maximum wave-height $H_+/H_s$, addressing the question of maximum wave overflow mentioned earlier. Panel (c), given in terms of relative period $X/H_s$, addresses the ``water hammer'' issue, determining stable wavelengths $X$ and temporal frequencies $\omega= -c(H_s)(H_s/X)$. Similarly as in the viscous case \cite{BJNRZ2}, there is seen to be a transition at about $F\approx 2.75$ to a different asymptotic regime, in which the 
mid-frequency
stability curve has a different shape. This is displayed in the enlarged diagram of panel (d).

In Figure~\ref{fig_compare}(a), we compare the above stability results to the low-frequency stability predictions of model \eqref{av}, given in terms of average relative height $\overline{H}/H_s$ as in \cite{BL02}, where here $\overline{H}$ denotes the average of the height profile $H$ over a period. Here, the green curves are the stability boundaries of \cite{BL02} and the red and blue boundaries the stability boundaries of Figure~\ref{fig_rosetta}. We see clearly that \eqref{av}
\emph{does not correctly predict low-frequency spectral behavior nor long-time dynamical stability of roll waves} of any kind. Indeed, it gives both false positive and false negative predictions of spectral stability, invalidating in a strong sense the model \eqref{av} proposed in \cite{BL02}. 
This resolution, along with the analytical and numerical validation of the inviscid Whitham system \eqref{whit} , demonstrates the enormous benefit of working with the exact eigenvalue equations \eqref{eval} derived from sound mathematical bases.

In Figure~\ref{fig_compare}(b), we compare to a viscous stability diagram obtained through intensive numerical computations in \cite{BJNRZ2}. Upper and lower inviscid stability boundaries are 
again depicted according to the scheme in Figure~\ref{fig_scheme}(a). Both inviscid boundaries are seen to lie near the lower viscous boundary, with the upper viscous boundary deviating substantially from the upper inviscid curve. A closeup view, given in Figure~\ref{fig_blowup}, shows that lower viscous and inviscid curves are in fact \emph{extremely} close, giving (since carried out by separate codes and techniques) confirmation of the numerical accuracy of both computations. An important consequence from the engineering point of view is that
for practical purposes the small-amplitude part of the explicitly calculable inviscid stability boundary $\alpha=0$ {\it appears to suffice as an excellent approximation of the small-amplitude transition to stability also in the viscous case.}
Note that the viscosity coefficient associated with Figures~\ref{fig_compare}(b)-\ref{fig_blowup} has the value $\nu=1$; that is, part of the inviscid stability diagram seems to persist beyond a ``small-viscosity'' approximation, for moderate values of $\nu$ as well.

\begin{figure}[htbp]
\begin{center}
\includegraphics[scale=0.6]{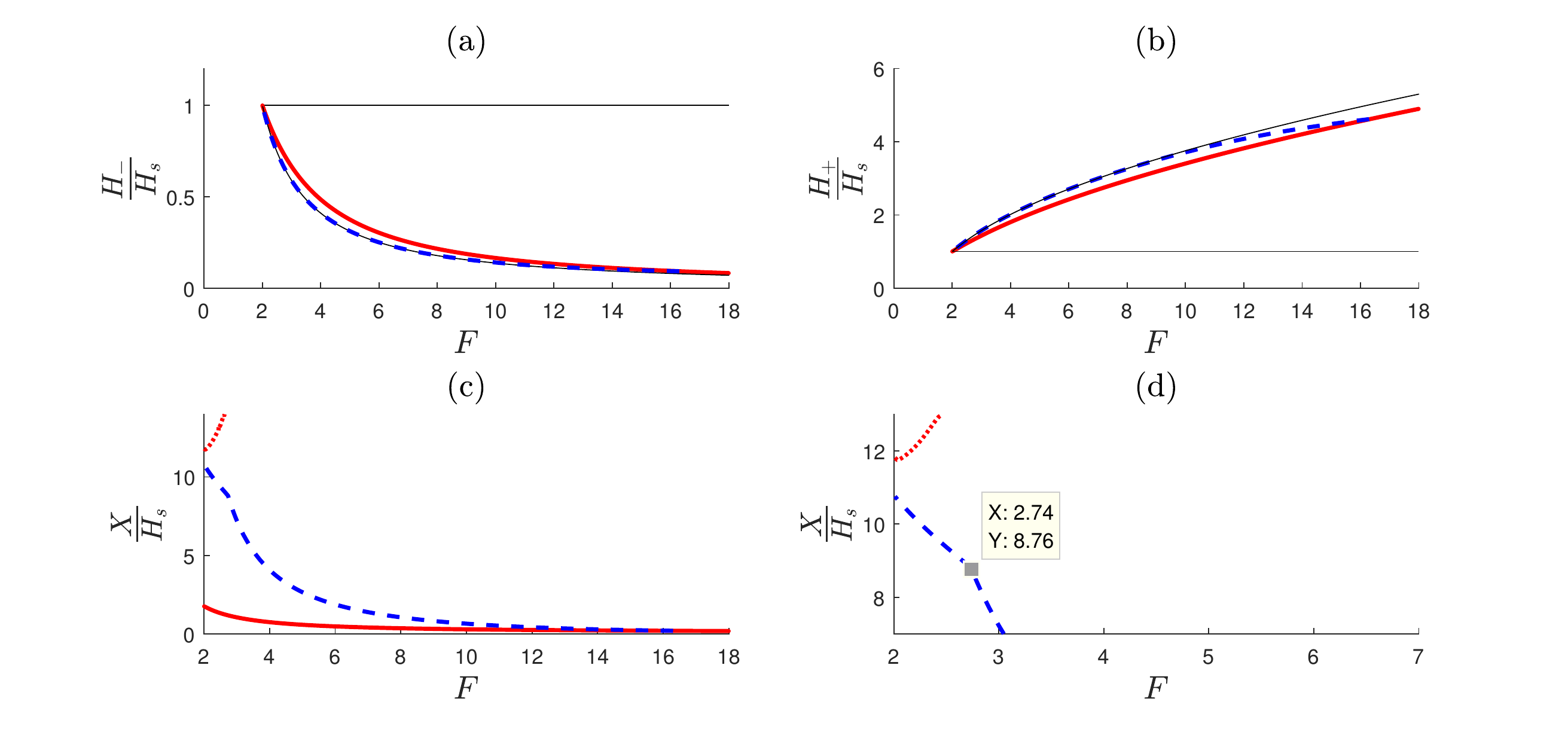}
\end{center}
\caption{Complete inviscid stability diagram: (a) $H_-/H_s$ v.s $F$. (b) $H_+/H_s$ vs. $F$. (c) $X/H_s$ vs. $F$.
(d) enlarged view of $X/H_s$ vs. $F$ near onset.}
\label{fig_rosetta}\end{figure}

\begin{figure}[htbp]
\begin{center}
\includegraphics[scale=0.6]{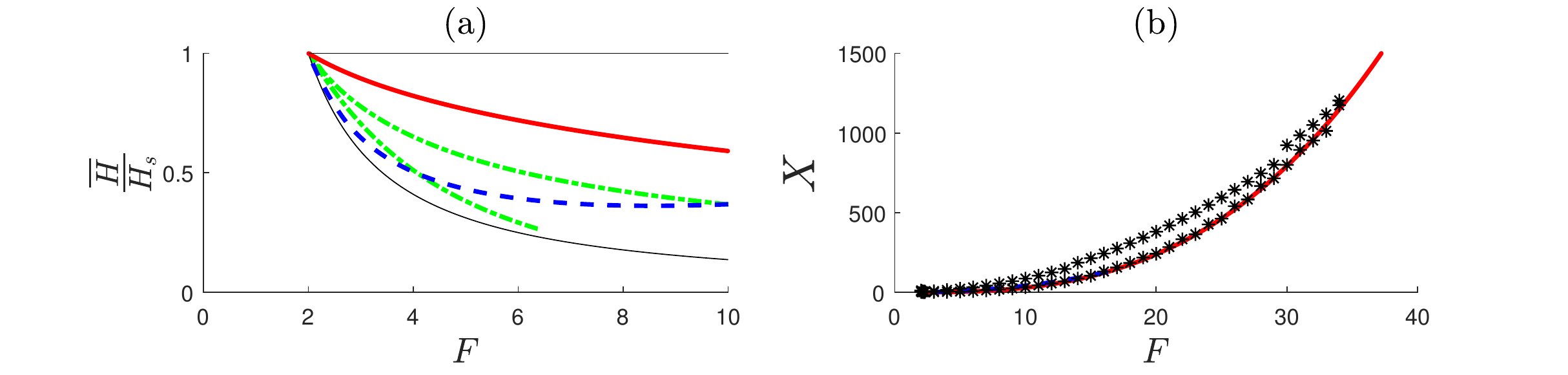}
\end{center}
\caption{Comparison of inviscid stability diagram (red and blue) with:
(a) modulational stability diagram of \cite{BL02} (green). (b) viscous stability diagram of \cite{BJNRZ2} (black).}
\label{fig_compare}\end{figure}

\begin{figure}[htbp]
\begin{center}
\includegraphics[scale=0.6]{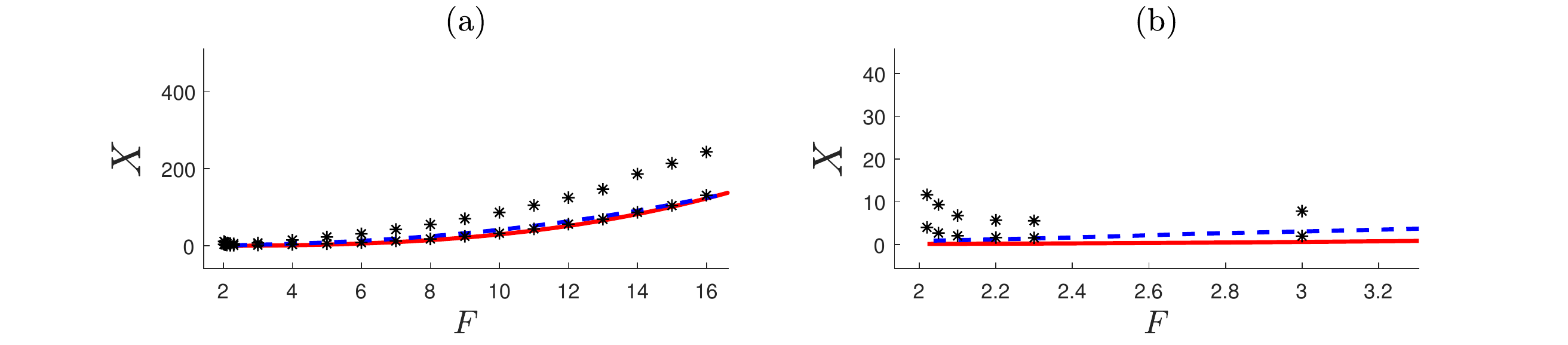}
\end{center}
\caption{Blowups of Figure~\ref{fig_compare}(b). (a) Close correspondence of lower curves away from
onset. (b) Different scaling of viscous vs. inviscid boundaries near onset.}
\label{fig_blowup}\end{figure}

\subsection{Discussion and open problems }\label{s:discussion}
We have obtained, similarly as in the viscous investigations of \cite{BJNRZ2},
surprisingly simple-looking
curves bounding the region of spectral stability in parameter space from above and below, across which particular low- and intermediate-frequency stability transitions for inviscid roll waves occur. This stability region is bounded; in particular, all waves are unstable for $F\gtrapprox 16.3$. As also observed in the viscous case, there seems to be a transition between low-Froude number, or ``near-onset'' behavior for $2<F \lessapprox 2.5$ and high-Froude number behavior for $F\gtrapprox 2.5$, in the inviscid case occurring at $F\approx 2.74.$

In contrast to the viscous case, however,
in this inviscid case, the small-amplitude  transition is seen to agree with
the low-frequency stability boundary that is obtained {\it explicitly}, being given as the solution of a cubic equation in wave parameters. Moreover, numerical computations of this boundary are fast and well-conditioned, even for large $F$. Using scale-invariance of \eqref{sv_intro} these findings are compactly displayed in a single figure (Figure~\ref{fig_rosetta}), for four different choices of wave parametrizations which we hope convenient for hydraulic engineering applications.

Our results validate {\it but are not implied by} the associated formal Whitham modulation system \eqref{whit}; being obtained by direct spectral analysis via the periodic Evans-Lopatinsky determinant \eqref{det} rather than by formal WKB asymptotics. Furthermore, our results invalidate in a strong sense the alternative averaged model \eqref{av} as a predictor of stability/instability of inviscid roll waves, since it is shown to give false positive and false negative predictions of spectral stability. Part of our explicit low-frequency stability boundary appears to be accurate also in the viscous case, at least for a range of viscosity $0<\nu\leq 1$.

As noted throughout the presentation, there is a substantial analogy between roll wave stability and the more developed detonation theory, both in the structure of the equations \eqref{ab} and phenomena/mathematical issues involved. It is our hope that the periodic stability function \eqref{det} and numerical stability diagram introduced here will play a similar role for stability of roll waves as have Erpenbeck's stability function and systematic numerical investigation for detonations \cite{E}.

Our results suggest a number of directions for further investigation. For example, it should be possible to carry out rigorous asymptotics on the periodic Evans-Lopatinsky determinant in the $F\to 2$ regime where numerical computations become singular, complementing our current analysis and determining the validity of the various formal amplitude equations proposed near onset. Interestingly, the inviscid asymptotics appear to have a different scaling than the viscous ones; see Figure~\ref{fig_blowup}(b). A related problem is to derive the Whitham equations \eqref{whit} from first principles via
a systematic
multiscale expansion, and, continuing, to obtain a second-order expansion (similar to \cite{NR}), presumably recovering the second-order low-frequency stability condition obtained here via expansion of the Evans-Lopatinsky determinant.

A further very intriguing puzzle left by our analysis is the close correspondence of low-frequency boundaries in the inviscid and viscous case; Figure~\ref{fig_blowup}(a). This is reminiscent of the situation in the case of denotations, where it has been shown rigorously that low-frequency limits for inviscid and viscous models agree \cite{JLW}.\footnote{Curiously, this does not yield instability results in the detonation case, but only low-frequency {\it stability}.} Here, the corresponding object would appear to be the Whitham modulation equations, or low-frequency spectral expansion. However, these clearly do not agree, since numerical computations of \cite{BJNRZ2} show that characteristics of the viscous modulation systems do not vanish, whereas inviscid characteristics do. Moreover, the upper and lower stability curves clearly diverge near onset $F\to 2^+$, Figure~\ref{fig_blowup}(b), with viscous periods going to infinity while inviscid periods approach finite limits. Thus, in the present case, the approximate agreement of low-frequency stability boundaries appears to be 
limited to the large-$F$ regime $F\gtrapprox 2.5$. Nonetheless, the correspondence seems of practical use in hydraulic engineering applications lying in this same regime, and as such this would be very interesting to shed some light on this coincidence.

More generally, the study of the singular zero-viscosity limit and viscosity-dependence of upper {\it and} lower stability boundaries appears to be the main outstanding problem in stability of roll waves. See \cite{Z2} for a corresponding study in the detonation case. The roll wave case is significantly complicated by the presence of sonic points for the inviscid profile, corresponding to loss of normal hyperbolicity in the singular limit; see the treatment of the related existence problem in \cite{H03}.

A natural further question is to what extent nonlinear stability is related to the spectral stability properties studied here. Here, we face the conundrum pointed out in \cite{JLW}, that the strongest nonlinear stability results proven to date for solutions containing shocks, are {\it short time} stability results in the sense of the original shock stability work of Majda \cite{M} (see also \cite{Majda_existence_multiD_shock-fronts,Majda_stability_multiD_shock-fronts,FrancheteauMetivier,BenzoniGavage-Serre_multiD_hyperbolic_PDEs}), yet these results can be obtained equally well assuming only stability of the component shocks, corresponding in terms of spectrum of the full wave to nonexistence of spectra with sufficiently large real part; see again Remark~\ref{loprmk} and reference \cite{N3}. To obtain a full nonlinear asymptotic stability result
could
require an argument set not in Sobolev setting, but in a setting like BV accommodating formation of additional shocks, presumably involving a Glimm or shock-tracking scheme. This is a very interesting problem, but has not so far been carried out even in the simpler detonation setting. An alternative approach to nonlinear dynamics is to combine the rather complete nonlinear stability theory of the viscous case \cite{BJNRZ2} with the detailed spectral stability picture of the inviscid case carried out here, closing the logical loop by a comprehensive study of viscous spectra in the inviscid limit following \cite{Z1,Z2}.

Finally, we mention the very interesting recent work of Richard and Gavrilyuk \cite{RG1,RG2} introducing a refined version of \eqref{sv_intro} modeling additional vorticity effects. For roll waves, this takes the form of the full non isentropic ($3\times 3$) equations of gas dynamics plus source terms, and yields profiles matching experimental observations of \cite{C34,Br1,Br2} to an amazing degree, removing shock overshoot effects of the Dressler approximations. It would be very interesting to apply our methods toward the stability of these waves. Other natural directions for generalization are the study of multi-shock roll waves as mentioned in Remark~\ref{comprmk2} below, and the study of multidimensional stability incorporating transverse as well as longitudinal perturbations.

\medskip
{\bf Acknowledgement:} Thanks to Olivier Lafitte for stimulating discussions regarding
normal forms for singular ODE, and to Blake Barker for his generous help in sharing source computations from \cite{BJNRZ2}. The numerical computations in this paper were carried out in the MATLAB environment; analytical calculations were checked with the aid of MATLAB's symbolic processor. Thanks to Indiana Universities University Information Technology Services (UITS) division for providing the Karst supercomputer environment in which most of our computations were carried out. This research was supported in part by Lilly Endowment, Inc., through its support for the Indiana University Pervasive Technology Institute, and in part by the Indiana METACyt Initiative. The Indiana METACyt Initiative at IU was also supported in part by Lilly Endowment, Inc.

\section{Dressler's roll waves}\label{s:existence}

\subsection{Profile equations}\label{s:profiles}

We first review the derivation by Dressler \cite{D49} of periodic traveling-wave solutions of \eqref{sv_intro}. Let $(h,q)=(H,Q)(x-ct)$ denote a solution of \eqref{sv_intro} with $c$ constant and $(H,Q)$ piece-wise smooth and periodic with period $X$,
with discontinuities at $jX$, $j\in\ZM$.
In smooth regions, we have therefore
\be\label{smooth}
-cH'+(Q)'=0\,,\quad -cQ'+\left(\frac{Q^2}{H}+ \frac{H^2}{2F^2}\right)'=H-\frac{|Q|Q}{H^2}\,,
\ee
and across curves of discontinuity $(H,Q)$ are chosen to satisfy the Rankine-Hugoniot jump conditions
\be\label{jump}
-c[H]+[Q]=0\,,\quad -c[Q]+\left[\frac{Q^2}{H}+\frac{H^2}{2F^2}\right]=0,
\ee
augmented following standard hyperbolic theory \cite{La,Sm,Serre-conservation-I} with the Lax characteristic conditions
\be\label{lax}
a_1(X^-)<c,\ a_2(X^-)>c>a_2(X^+) \qquad\hbox{\rm or }\qquad
a_2(X^+)>c,\ a_1(X^-)>c>a_1(X^+) \, ,
\ee
where
$$
a_1=\frac{q}{h}-\sqrt{\frac{h}{F^2}}\,,\qquad
a_2=\frac{q}{h}+\sqrt{\frac{h}{F^2}}
$$
are the characteristics associated with \eqref{sv_intro}. Recall that the conservative part of \eqref{sv_intro} is the system of isentropic gas dynamics with velocity $u=q/h$ and pressure law $p(h)=h^2/2F^2$, thus above formulas coincide with $a_j=u\pm \sqrt{p'(h)}$.

Integrating the first equation of \eqref{smooth}
jointly with the first equation of \eqref{jump}, we obtain
\be\label{UHrel}
Q-c H\equiv \const =:-q_0,
\ee
whence, substituting in the second equation of \eqref{smooth}, we obtain the scalar ODE
\be\label{altsmooth}
\left(\frac{-q_0^2}{H^2}+\frac{H}{F^2}\right)H'=H-\left|-q_0+cH\right|(-q_0+cH)/H^2
\ee
and, substituting in the second equation of \eqref{jump}, the scalar jump condition
\be\label{altjump}
\left[\frac{q_0^2}{H}+\frac{H^2}{2F^2}\right]=0.
\ee

From \eqref{altjump} we deduce that there is a special {\it sonic value} $H_s\in (H_-,H_+)$ such that
$$
\frac{-q_0^2}{H^2}+\frac{H}{F^2}=0
\qquad\textrm{when}\qquad
H=H_s\,,
$$
in particular, there, one of the characteristic speeds $a_j$ equals the wave speed $c$, hence the terminology. The latter argument uses in a fundamental way the scalar nature of the reduced profile equation~\eqref{smooth} but a similar conclusion may be obtained as a more robust consequence of the Lax condition, from which stems that at least one of the characteristic speeds $a_j$ must change position with respect to speed $c$ along the wave profile.
It follows that \eqref{altsmooth} is {\it singular} at the value $H_s$, from which we can draw a number of useful conclusions. First, we may check that there is indeed only one sonic value and that reciprocally we may solve the sonic equation to obtain, up to a sign indetermination, $q_0$ as a function of $H_s$ (and $F$); then, substituting this value in \eqref{altsmooth} evaluated at $H=H_s$, we obtain, again up to a sign indetermination, $c$ as a function of $H_s$ (and $F$) as well, leaving
$$
\frac{c}{H_s^{1/2}}=1\pm\frac1F\,,\qquad \frac{q_0}{H_s^{3/2}}=\pm\frac1F.
$$
At this stage, by monotonicity of solutions to \eqref{altsmooth}, one may notice that only the $+$ sign, corresponding to a $2$-shock, is compatible with the Lax condition \eqref{lax} and it requires $F>2$.

Combining information, assuming $F>2$, and setting $H_-:=H(0^+)$, $H_+:=H(X^-)$, we obtain the defining relations
\ba\label{pode}
H'= F^2\frac{H^2+(H_s-c^2)H+\frac{q_0^2}{H_s}}{H^2+H_s\,H+H_s^2}, \qquad H_-\leq H\leq H_+ ,\\
\ea
\be\label{rhrel}
\frac{q_0^2}{H_-}+\frac{H_-^2}{2F^2}\,=\,\frac{q_0^2}{H_+}+\frac{H_+^2}{2F^2},
\ee
\be\label{cq}
q_0=q_0(H_s)=\frac{H_s^{\frac32}}{F}\,,\qquad
c=c(H_s)= H_s^{\frac12}\Big(1+\frac{1}{F}\Big)\,,
\ee
where $'$ denotes $d/dx$. From the solution $H$ of \eqref{pode}, we may recover
$Q=-q_0+ cH$ using \eqref{UHrel}.
Note that, for any $0<H_-<H_s$, equation~\eqref{rhrel} defines a unique $H_+=H_+(H_-,H_s)>H_s$. Thus solvability reduces to the condition that there is no equilibrium of \eqref{pode} in $(H_-,H_+)$, which takes the form $H_->H_{hom}$ for some $H_{hom}(H_s)$. We make this latter condition explicit below. At last we observe that the shape of $H$ does not really depend on $H_-$ but is obtained as a piece of the maximal solution of \eqref{pode} passing through $H_s$.

\br\label{comprmk2}
Here, we have decided to consider only roll waves containing a single shock per period. By the analysis above, it is clear that we may construct multi-shock profiles consisting of arbitrarily many smooth pieces on intervals surrounding $H_s$, connected by shocks satisfying \eqref{altjump}. However these solutions do not persist as traveling waves under viscous perturbations \cite[\S 1.3.4]{N1}. A similar situation occurs in phase transitions models:  at the inviscid level one can form steady traveling patterns consisting of essentially arbitrary noninteracting (since traveling with common speed) under-compressive phase-transitional shocks switching from one phase to another. Turning on viscosity makes their ``tails'' interact, and so they do not persist as a noninteracting pattern. Numerical simulations \cite{AMPZ} show that these slowly interacting patterns can persist for a very long time, but eventually ``coarsen''  with waves overtaking and absorbing each other as happens for 
the St.~Venant equations
in some (unstable) cases \cite{BM}.
\er

\subsection{Scale-invariance}
Following \cite{BL02}, we note the useful scale-invariance
\ba\label{scaleinv}
H(x)=H_s \underline{H}(x/H_s)\,,&
\qquad
X=H_s\underline{X}\,,
\qquad
c=H_s^{1/2}\underline{c}\,,\\
q_0=H_s^{3/2}\underline{q}_0\,,&
\qquad
Q(x)=H_s^{3/2} \underline{Q}(x/H_s)\,,
\ea
of \eqref{pode}, where $\underline{H}$ is the solution of \eqref{pode} with $H_s=1$, and correspondingly
$$
\underline{c}=1+\frac{1}{F}\,,
\qquad
\underline{q}_0=\frac1F\,,
$$
i.e.,
\be\label{rode}
\underline{H}' =
\Psi(\underline{H}):= \frac{F^2\,\underline{H}^2-(1+2F)\underline{H}+1}{\underline{H}^2+\underline{H}+1},
\qquad \underline{H}_- \leq \underline{H} \leq \underline{H}_+.
\ee
with
\be\label{zrhrel}
\frac{1}{\underline{H}_-}+\frac{\underline{H}_-^2}{2}\,=\,\frac{1}{\underline{H}_+}+\frac{\underline{H}_+^2}{2},
\qquad\textrm{\emph{i.e.}}\qquad
\underline{H}+= \mathcal{Z}_+(\underline{H}_-)=-\frac{\underline{H}_-}{2}
+\sqrt{\frac{\underline{H}_-^2}{4}+\frac{2}{\underline{H}_-}} \, .
\ee
This is quite helpful in simplifying computations; in particular, we see that all profiles are just rescaled pieces of a single solution $\underline{H}$ of the scalar ODE \eqref{rode}.
Note that the the two real roots $\frac{1+2F\pm\sqrt{1+4F}}{2F^2}$ of the numerator of $\Psi$ are smaller than the sonic point $1$, so that the condition that $(\underline{H}_-,\underline{H}_+)$  avoid these stationary points of \eqref{rode} is
\be
\label{Hnrange}
\underline{H}_{hom}:=\frac{1+2F+\sqrt{1+4F}}{2F^2}<\underline{H}_-< 1.
\ee
We note in passing that the denominator of $\Psi$ never vanishes, being always positive.

\subsection{Wave numbers and averages}
Denoting averages over a single periodic cell by upper bars, from equation~\eqref{rode} we have
\ba\label{zave}
\underline{X}=\ell(\underline{H}_-):=\int_{\underline{H}_-}^{\underline{H}_+} \frac{dh}{\Psi(h)}\,,&\qquad
\overline{\underline{H}}=\frac{1}{\ell(\underline{H}_-)}\int_{\underline{H}_-}^{\underline{H}_+}\frac{h\,dh}{\Psi(h)}\,,\qquad
\overline{\underline{Q}}=\underline{c}\,\overline{\underline{H}}-\underline{q}_0\,,\\
\overline{\frac{\underline{q}_0^2}{\underline{H}}+\frac{\underline{H}^2}{2F^2}}=\overline{\gamma}(\underline{H}_-)&:=\frac{1}{\ell(\underline{H}_-)}\int_{\underline{H}_-}^{\underline{H}_+} \frac{1}{F^2}\left(\frac{1}{h} + \frac{h^2}{2}\right) \frac{dh}{\Psi(h)},
\ea
with $\underline{H}_+=\mathcal{Z}_+(\underline{H}_-)$. As integrals of rational functions, all the above integrals may be computed explicitly. We record also formulas
\be\label{komega}
\underline{k}=\frac{1}{\underline{X}}\,,\qquad
\underline{\omega}= -\underline{c}\underline{k}\,,
\ee
for the (scaled) spatial and temporal wave numbers $\underline{k}$ and $\underline{\omega}$, also explicitly computable, and corresponding scaling rules
\be\label{scaleinvbis}
k=\frac{\underline{k}}{H_s}\,,\qquad
\omega=\frac{\underline{\omega}}{H_s^{1/2}}\,.
\ee

At last, note that at any $\underline{H}_{hom}<\underline{H}_-< 1$
$$
\ell'(\underline{H}_-)=\frac{1}{\Psi(\underline{H}_+)}\left(\frac{\underline{H}_+}{\underline{H}_-}\right)^2\frac{\underline{H}_+^3-1}{\underline{H}_-^3-1}-\frac{1}{\Psi(\underline{H}_-)}<0
$$
(as a sum of negative terms) so that one could alternatively parametrize wave profiles by $(H_s,X)$ or $(H_s,\underline{X})$ instead of $(H_s,H_-)$ or $(H_s,\underline{H}_-)$.

\section{Modulation systems}\label{s:modulation}
We next study modulation systems \eqref{whit} and \eqref{av}, using the computations of Section \ref{s:existence}.

\subsection{Dispersion relations and hyperbolicity}
Both of the systems \eqref{whit} and \eqref{av} are of the form
\begin{equation}\label{system}
\d_tG^0 + \d_x G^1=0,
\end{equation}
where $G^j=G^j(H_s, \underline{H}_-)$, of which the characteristics are the eigenvalues $\tilde \alpha_j$, $j=1,2$ of
$$
(A^0)^{-1}A^1\,, \qquad
A^j:=d_{(H_s,\underline{H}_-)}G^j,
$$
or, alternatively, coefficients of the dispersion relations $\lambda_j(\xi)=i\alpha_j \xi$ determined by
$$
\det\big(\lambda_j(\xi) A^0+ i\xi A^1\big)=0.
$$
The characteristics $\alpha_j$ are evidently invariant under nonsingular changes of parameters, corresponding to nonsingular changes of coordinates in the first-order system, with hyperbolicity corresponding to the $\alpha_j$ being real and semisimple\footnote{That is, the algebraic and geometric multiplicities of the real $\alpha_j$ agree, hence the eigenspaces contain no Jordan Blocks.}.
Below, we compute the characteristics $\alpha_j$ for both the Whitham system \eqref{whit} and the averaged system \eqref{av}.

\subsection{Whitham system}\label{s:Whitham}
We first compute the characteristics associated to the system \eqref{whit}.
Using \eqref{scaleinv}-\eqref{scaleinvbis}, the system \eqref{whit} may be written in the form
\be\label{Gs}
G^0=\bp H_s \overline{\underline{H}} \\ 1/(H_s\ell)\ep, \qquad
G^1=\bp H_s^{3/2}(\underline{c}\,\overline{\underline{H}}-\underline{q}_0)\\ \underline{c}/(H_s^{1/2}\ell)\ep
\ee
which, taking partial derivative with respect to $H_s,\underline{H}_-$, yields
\be\label{dGs}
A^0=\bp \overline{\underline{H}} & H_s \overline{\underline{H}}' \\ -1/(H_s^2\ell)& -\ell'/(H_s \ell^2)\ep, \qquad
A^1=\bp \frac32H_s^{1/2}(\underline{c}\,\overline{\underline{H}}-\underline{q}_0) &
H_s^{3/2}\underline{c}\,\overline{\underline{H}}'\\
-\frac12\underline{c}/(H_s^{3/2}\ell)&-\underline{c}\ell'/(H_s^{1/2}\ell^2)\ep .
\ee
Thus, so long as $A^0$ is invertible we find
$$
(A^0)^{-1}=
\frac{H_s\ell^2}{\ell\overline{\underline{H}}'-\overline{\underline{H}}\ell'}
\bp-\ell'/(H_s \ell^2)& -H_s \overline{\underline{H}}' \\
1/(H_s^2\ell)&\overline{\underline{H}}\ep
$$
hence
\begin{equation}\label{chars}
(A^0)^{-1}A^1= c\,\Id
+\frac{1}{\ell\overline{\underline{H}}'-\overline{\underline{H}}\ell'}
\bp H_s^{1/2}(\tfrac32\ell'\underline{q}_0-\tfrac12\underline{c}\,(\ell\overline{\underline{H}})')&0\\H_s^{-1/2}\ell(\underline{c}\,\overline{\underline{H}}-\tfrac32\underline{q}_0) & 0 \ep;
\end{equation}
here, we have verified numerically $\det A^0= \frac{\ell\overline{\underline{H}}'-\overline{\underline{H}}\ell'}{H_s\ell^2}\neq 0$. From \eqref{chars}, we have evidently that the characteristics of the Whitham system \eqref{whit} are
\be\label{whitchar}
\tilde \alpha_1=c\,,\qquad
\tilde\alpha_2=c+
H_s^{1/2}\frac{\tfrac32\ell'\underline{q}_0-\tfrac12\underline{c}\,(\ell\overline{\underline{H}})'}{\ell\overline{\underline{H}}'-\overline{\underline{H}}\ell'}.
\ee
As these are both real, we see the Whitham system \eqref{whit} is {\it strictly hyperbolic} whenever $\tilde \alpha_2\neq c $. On the boundary curve $\tilde\alpha_2=c$, it can be checked numerically that $\underline{c}\,\overline{\underline{H}}-\tfrac32\underline{q}_0\neq 0$, hence the system fails to be hyperbolic due to the presence of a non-trivial Jordan block; see Figure~\ref{fig_M21 BL}(a).

\subsection{Averaged system}\label{s:BLsys}
We next compute the characteristics of \eqref{av}. Using \eqref{scaleinv}-\eqref{scaleinvbis}, we may rewrite the system in the form \eqref{system} with
\be\label{BLGs}
G^0=\bp H_s \overline{\underline{H}} \\ H_s^{3/2}(\underline{c}\,\overline{\underline{H}} -\underline{q}_0)\ep,\qquad
G^1=\bp H_s^{3/2}(\underline{c}\,\overline{\underline{H}} -\underline{q}_0)\\
H_s^2(\underline{c}^2\overline{\underline{H}}-2\underline{c}\,\underline{q}_0+\bar\gamma)\ep,
\ee
yielding 
\be\label{BLdGs}
A^0=\bp \overline{\underline{H}}&H_s\overline{\underline{H}}'\\
\frac32H_s^{1/2}(\underline{c}\,\overline{\underline{H}} -\underline{q}_0)&
H_s^{3/2}\,\underline{c}\,\overline{\underline{H}}'\ep, \quad
A^1=\bp \frac32H_s^{1/2}(\underline{c}\,\overline{\underline{H}} -\underline{q}_0)&
H_s^{3/2}\underline{c}\,\overline{\underline{H}}'\\
2H_s(\underline{c}^2\overline{\underline{H}}-2\underline{c}\,\underline{q}_0+\bar\gamma)
&H_s^2(\underline{c}^2\overline{\underline{H}}'+\bar\gamma')\ep .
\ee
Thus, so long as $A^0$ is invertible we have
$$
(A^0)^{-1}=
\frac{1}{(\tfrac32\underline{q}_0-\tfrac12\underline{c}\,\overline{\underline{H}})\overline{\underline{H}}'}
\bp \underline{c}\,\overline{\underline{H}}'&-H_s^{-1/2}\overline{\underline{H}}'\\
-\frac32H_s^{-1}(\underline{c}\,\overline{\underline{H}} -\underline{q}_0)&
H_s^{-3/2}\overline{\underline{H}}\ep
$$
hence
\ba
\mathcal{A}:=(A^0)^{-1}A^1&=c\,\Id +
\frac{1}{(\tfrac32\underline{q}_0-\tfrac12\underline{c}\,\overline{\underline{H}})\overline{\underline{H}}'}
\bp
H_s^{1/2}\overline{\underline{H}}'(\underline{c}\,\underline{q}_0-2\bar\gamma)&
-{H_s}^{3/2}\bar\gamma'\overline{\underline{H}}'
\\
H_s^{-1/2}(2\bar\gamma\overline{\underline{H}}-\tfrac14\underline{c}^2\overline{\underline{H}}^2+\tfrac12\underline{c}\,\underline{q}_0\,\overline{\underline{H}}-\tfrac94\underline{q}_0^2)&
H_s^{1/2}\overline{\underline{H}}\bar\gamma'
\ep.
\ea
Next, solving for curves corresponding to vanishing of the discriminant of the quadratic polynomial $\det(\mathcal{A}-\lambda\Id)=0$, i.e.,
$({\rm Trace}(\mathcal{A}-c\,\Id))^2= 4\det(\mathcal{A}-c\,\Id)$, we get the equation
$$
(\overline{\underline{H}}'(\underline{c}\,\underline{q}_0-2\bar\gamma)
+\overline{\underline{H}}\bar\gamma')^2
=-\bar\gamma'\overline{\underline{H}}'\left(\underline{c}\,\overline{\underline{H}}-3\underline{q}_0\right)^2\,,
$$
or, more explicitly,
\be\label{hyp}
(\overline{\underline{H}}'(F+1-2F^2\bar\gamma)
+\overline{\underline{H}}F^2\bar\gamma')^2
+F^2\bar\gamma'\overline{\underline{H}}'\left((F+1)\,\overline{\underline{H}}-3\right)^2=0
\ee
for the boundaries of the region of hyperbolicity. Tracing the roots of \eqref{hyp}, we get, up to numerical error, the same boundaries reported in \cite[Fig.2.a)]{BL02} see Figure~\ref{fig_M21 BL} (b).

We point out, in particular, that the boundaries for the regions of hyperbolicity associated  to \eqref{whit} and \eqref{av} are \emph{different}.  In order to determine which, if either, give accurate information regarding the local dynamics about a roll wave solution of \eqref{sv_intro}, we next perform a mathematically rigorous investigation of the spectral stability of roll wave solutions of \eqref{sv_intro}.
\begin{figure}[htbp]
\begin{center}
\includegraphics[scale=.6]{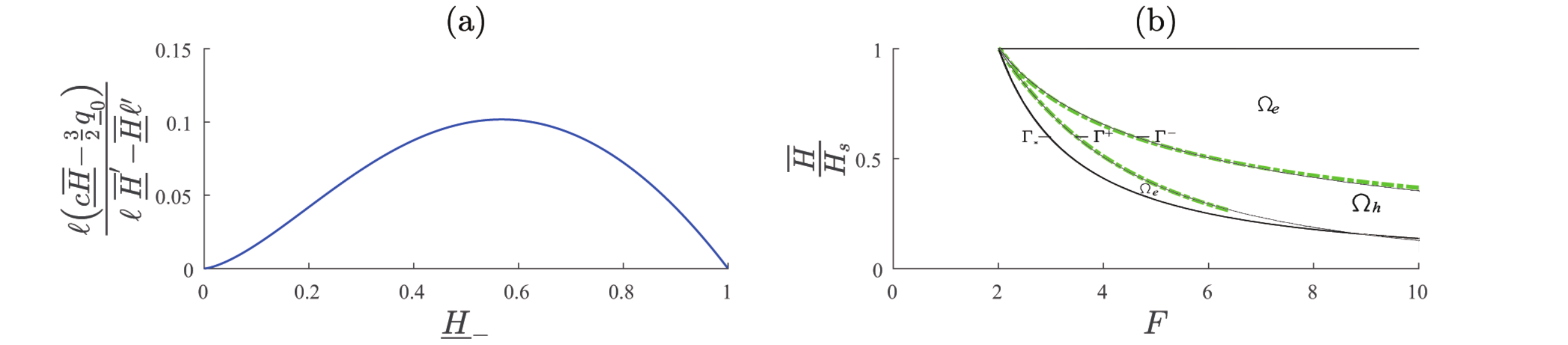}
\end{center}
\caption{(a)Value of
$\ell(\underline{c}\underline{\bar{H}}-\tfrac32\underline{q}_0)/(\ell\underline{\bar{H}}'-\underline{\bar{H}}\ell')$ on the strictly hyperbolic boundary $\tilde\alpha_2=c$. (b) Hyperbolic boundaries from \eqref{hyp} superimposed on corresponding figure from \cite{BL02}.
In (b), $\Omega_h$ and $\Omega_e$ denote domains of hyperbolicity and ellipticity, $\Gamma_\pm$ (thin grey line) the boundaries of $\Omega_h$, and $\Gamma_*$ the boundary of existence for roll wave solutions of \eqref{sv_intro}; the curves with thicker dark line (green in color plates) were computed using \eqref{hyp}. Labels $\bar{\zeta}$ and $Fr$ in \cite{BL02} correspond in our notation to $\overline{\underline{H}}$ and $F$.}
\label{fig_M21 BL}\end{figure}

\section{General spectral stability framework}\label{s:evans-lop}
We now turn to the exact spectral stability problem, replacing the formal development of Section~\ref{s:lop} with a 
treatment as rigorous as possible.
Our goal is to connect as closely as possible our spectral framework with a notion of linear stability relevant also at nonlinear level. Unfortunately we cannot rely on any general nonlinear stability framework since none is known for any class of discontinuous waves of hyperbolic systems. Instead we shall argue by comparison with, on one hand, local well-posedness theory near single-shock waves pioneered by Majda \cite{M,Majda_existence_multiD_shock-fronts,Majda_stability_multiD_shock-fronts,FrancheteauMetivier,BenzoniGavage-Serre_multiD_hyperbolic_PDEs}, in particular \cite{N3} devoted to short-time persistence of roll waves, and, on the other hand, nonlinear stability of continuous periodic waves \cite{JNRZ13}.

\subsection{Linear space-modulated stability}\label{s:linearizing}

As in \eqref{ab}--\eqref{rh}, consider a general system of balance laws $\d_tw +\d_x(F(w))=R(w)$, $w\in \R^n$, $w$ piecewise smooth, with jump conditions $[F(w)]_j=x_j'(t)[w]_j$ at discontinuities $x_j$, and a traveling roll wave solution $W$ with shocks at $X_j+ct=jX+ct$. We complement those jump conditions with Lax characteristic conditions but assume, as in Section~\ref{s:profiles}, that they are satisfied in a strict sense by $W$ thus they will not appear at the linearized level.

From the analysis of continuous periodic cases, the best stability that we expect to hold in general is what was coined as \emph{space-modulated stability} in \cite{JNRZ13}. This corresponds to showing that starting close from $W$ a solution $w$ will remain close in the sense that for some $(\tilde w,\psi)$
$$
w(x-ct-\psi(x,t),t)=W(x)+\tilde w(x,t)
$$
with $(\tilde w,\d_x\psi,\d_t\psi)(\cdot,t)$ small in suitable norms. See related detailed discussions in \cite{JNRZ13,R_HDR,R_Roscoff,R_linearKdV}. Given the regularity structure of $W$ it is natural to measure the smallness of $\tilde w(\cdot,t)$ in $H^s(\tRR)$, with $s\geq0$ and
$$\tRR\,:=\,\bigcup_{j\in \Z} (jX, (j+1)X)\,.$$
Note that when $s>1/2$ this implicitly requires that $\psi(\cdot,t)$ fixes discontinuities $(x_j(t))_{j\in\Z}$ of $w(\cdot,t)$ through
\be\label{e:discont}
x_j(t)\,=\,jX-ct-\psi(jX,t)\,,\qquad j\in\Z\,.\ee
Observe that whereas for continuous waves the role of the resynchronization by $\psi$ is to allow a capture of long-time preservation of shape beyond divergence of positions, that is, to ensure that the norm of $\tilde w(\cdot,t)$ remains small, for discontinuous waves it is already necessary to ensure that it remains finite in finite time. In particular, a synchronization ensuring \eqref{e:discont} is also needed in definitions and proofs of local-in-time well-posedness in piecewise smooth settings \cite{N3}. At last note that at time $t$ smallness of $\d_x\psi(\cdot,t)$ encodes both that $\Id_\R-\psi(\cdot,t)$ is a diffeomorphism and that the distance between consecutive shocks remain bounded away from zero, hence they do not interact directly.

In the new coordinates, the perturbation of shape and phase shifts $(\tilde{w},\psi)$ evolves according to
$$
\d_t\tilde w+\d_t\psi\,W'+\d_x(A\,\tilde w)-E\,\tilde w+\d_x\psi\,R(W)
\,=\,\mathcal{N}_1(\tilde w,\d_t\tilde w,\d_x\tilde w,\d_t\psi,\d_x\psi)\qquad\textrm{on }\tRR
$$
and, for any $j\in\ZM$,
$$
\d_t\psi(jX,t)\,[W]+[A\tilde w]_j\,=\,\mathcal{N}_2(\tilde w,\d_t\tilde w,\d_x\tilde w,\d_t\psi,\d_x\psi)
$$
where, here, $[h]_j:= h((jX)^+)-h((jX)^-)$, $\mathcal{N}_1$, $\mathcal{N}_2$ are at least quadratic in their arguments and, as in Section~\ref{s:lop},
$$
A(x):=d_wF(W(x))-c\Id
\qquad\textrm{and}\qquad
E(x):=d_wR(W(x))\,.$$
To get closer to equations \eqref{abint}-\eqref{abjump}, we now introduce
$$
y_j(t):=x_j(t)+ct-jX=-\psi(jX,t)
\qquad\textrm{and}\qquad
v:=\tilde w+\psi W'
$$
and write the above equations equivalently as
$$
\d_tv+\d_x(A\,v)-E\,v
\,=\,\mathcal{N}_1(v-\psi W',\d_t(v-\psi W'),\d_x(v-\psi W'),\d_t\psi,\d_x\psi)\qquad\textrm{on }\tRR
$$
and, for any $j\in\ZM$,
$$
y_j'(t)\,[W]+y_j(t)[AW']-[Av]_j\,=\,-\mathcal{N}_2(v-\psi W',\d_t(v-\psi W'),\d_x(v-\psi W'),\d_t\psi,\d_x\psi)\,.
$$
We now drop nonlinear terms to focus on linear stability issues. 
Note however that our relsovent estimates will not gain derivatives so that the presence of derivatives in nonlinear terms will necessarialy 
induce a derivative loss in a nonlinear scheme proving stability and relying directly on linearized estimates. 
Recall that, up to now, this issue has been bypassed only as far as short-time local well-posedned is concerned;
 again see \cite{N3} and \cite{BenzoniGavage-Serre_multiD_hyperbolic_PDEs} and references therein.

In view of the foregoing discussion the natural linear stability problem consists in 
considering the bounded (continuous) solvability of the system
\be\label{e:lin-evolve}
\d_tv+\d_x(A\,v)-E\,v\,=\,f\quad\textrm{ on }\tRR
\qquad\textrm{and}\qquad
\textrm{for any }j\,,\quad
y_j'\,[W]-y_j[AW']+[Av]_j\,=\,g_j
\ee
for functions $f$ and sequences $(g_j)$ belonging to an appropriate space, i.e. determining if \eqref{e:lin-evolve} has solutions such that
there exists a $\psi$ such that, for any $j$, $\psi(jX,t)=-y_j(t)$, and $(v-\psi W',\d_x\psi,\d_t\psi)$ may be bounded in terms of $(f,(g_j)_j,v(\cdot,0),(y_j(0)))$. The question of rigorously elucidating how this is connected to spectral properties considered here would lead us too far and we leave it for further investigation. See \cite{JNRZ13,R_linearKdV} for examples of such considerations for different classes of equations. We stress however that based on analyses of continuous cases the kind of time growth expected to arise from neglecting the dynamical role of $\psi$ - in particular trying to bound $\psi$ rather than $\d_x\psi$ - is algebraic, thus may be safely omitted when focusing, as we shall do now, on precluding exponential growths. Regarding local well-posedness, at the linearized level one only needs to exclude growths faster than exponential and $\psi$ may be chosen independently of dynamical considerations, in an essentially arbitrary way from the discontinuity positions, for instance cell-wise affine such that $\psi(jX,t)=-y_j(t)$ as in \cite{N1,N2,N3}.

For our purpose it is actually sufficient to consider solvability of \eqref{e:lin-evolve} in $H^1(\tRR)\times \ell^2(\ZM)$. As is well-known, see for instance \cite[\S 3.1.1]{BenzoniGavage-Serre_multiD_hyperbolic_PDEs}, thanks to the equation traces are still defined in lower-regularity settings, however we use $H^1(\tRR)$ setting for another purpose here: to discard algebraic or logarithmic singularities that arise from sonic points in $\tRR$. Once those are cast away, one may actually transfer conclusions from $H^1(\tRR)$ to $H^s(\tRR)$, for any $s\geq1$. To motivate the $\ell^2(\Z)$ framework for $(y_j)_j$, we add one more comment concerning its relation with $\psi$. Without loss of generality one expects to be able to enforce that the phase shift $\psi$ is low-frequency and centered - see \cite[Section~3.1]{R_linearKdV} - so that its high-regularity norms are controlled by its lower ones and
$$
\psi(x,t)\,=\,\int_{-\pi/X}^{\pi/X} e^{i\xi x}\widehat \psi (\xi, t) d\xi
$$
where $\widehat{\cdot}$ denotes the Fourier transform in the $x$-variable. Besides, under these conditions, from Parseval identities for the Fourier transform and Fourier series
$$
\|\psi(\cdot,t)\|_{L^2(\RM)}=\sqrt{X}\|(y_j(t))_j\|_{\ell^2(\ZM)}\,,\qquad
\|\d_t\psi(\cdot,t)\|_{L^2(\RM)}=\sqrt{X}\|(y_j'(t))_j\|_{\ell^2(\ZM)}\,.
$$

\subsection{Structure of the spectral problem}\label{s:lineartospectral}

Focusing on solutions to~\eqref{e:lin-evolve} that grow at most linearly in time naturally lead to the consideration of Laplace transforms in time on time frequencies $\lambda$ with $\Re(\lambda)>0$. This transforms \eqref{e:lin-evolve} into
\be\label{e:lin-line}
\lambda v+\d_x(A\,v)-E\,v\,=\,f\quad\textrm{ on }\tRR
\qquad\textrm{and}\qquad
\textrm{for any }j\,,\quad
\lambda y_j\,[W]-y_j[AW']+[Av]_j\,=\,g_j
\ee
with notational changes that the new $(v,(y_j)_j)$ is the Laplace transform of the old one at frequency $\lambda$ and that the new $(f,(g_j)_j)$ mixes Laplace transforms at $\lambda$ of old ones and initial data for former $(v,(y_j)_j)$.

Observing that \eqref{e:lin-line} is periodic-coefficient in space, it is natural to introduce the Bloch-wave representation of $v$ (and $f$) and to interpret $y=(y_j)_j$ (and $(g_j)_j$) as Fourier series of $(2\pi/X)$-periodic functions\footnote{Note that in terms of the above low-frequency
assumption on
$\psi$, $\check y(\xi)=-\widehat{\psi}(\xi)$ when $\xi\in[-\pi/X,\pi/X]$.}
\ba\label{blochrep}
v(x)&=\int_{-\pi/X}^{\pi/X} e^{i\xi x}\check v(\xi,x) d\xi,\qquad\qquad
y_j=\int_{-\pi/X}^{\pi/X} e^{i\xi jX}\check y(\xi) d\xi,
\ea
where each $\check v(\xi,\cdot)$ is $X$-periodic. For sufficiently smooth $v$ and sufficiently localized $(y_j)_j$, the former transforms are defined pointwise by
$$
\check{v}(\xi,x):=\sum_{k\in\ZM}e^{i\frac{2k\pi}{X}x}\,\widehat{v}\big(\tfrac{2\pi k}{X}+\xi\big)=\sum_{k\in\ZM}e^{-i\xi(x+k)}v(x+kX),
\qquad
\check{y}(\xi):=\sum_{j\in\ZM}e^{-ijX\,\xi}\,y_j\,.
$$
General definitions follow by a density argument in $L^2$ (respectively, $\ell^2$) based on Parseval identities
$$
\|\check{v}\|_{L^2(-\pi/X,\pi/X;L^2(0,X))}
=\tfrac{1}{\sqrt{2\pi}}\|v\|_{L^2(\R)}
\,,
\qquad
\|\check{y}\|_{L^2(-\pi/X,\pi/X)}=\sqrt{\frac{X}{2\pi}}\|(y_j(t))_j\|_{\ell^2(\ZM)}\,.
$$
In particular the Bloch transform identifies $L^2(\R)$ with $L^2(-\pi/X,\pi/X;L^2(0,X))$,  and this identification may be extended to $H^s(\tRR)$ with $L^2(-\pi/X,\pi/X;H^s(0,X))$ by
observing
$$
\|(\d_x+i\xi)^k\check{v}\|_{L^2(-\pi/X,\pi/X;L^2(0,X))}
=\tfrac{1}{\sqrt{2\pi}}\|\d_x^kv\|_{L^2(\R)}\,,\qquad k\in\NM\,.
$$
For comparison, note the key distinction that $H^s(\R)$ is identified with $L^2(-\pi/X,\pi/X;H_{\rm per}^s(0,X))$ where $H_{\rm per}^s(0,X)$ is the $H^s(0,X)$-closure of smooth $X$-periodic functions on $\R$, hence is a set of $H^s(0,X)$ functions satisfying suitable periodic boundary conditions as soon as $s>1/2$.

Applying the above transformations to \eqref{e:lin-line} diagonalizes it into single-cell problems parametrized by the Floquet exponent $\xi$, namely
\be\label{e:lin-cell}
\lambda v+(\d_x+i\xi)(A\,v)-E\,v\,=\,f\quad\textrm{ on }(0,X)
\qquad\textrm{and}\qquad
y(\lambda\,[W]-[AW'])+[Av]\,=\,g
\ee
where we have dropped $\check{ }$ signs for the sake of readability. This clean characterization in \eqref{e:lin-cell} originates in \cite{N1,N2}, without discussion of underlying integral transforms. Here, we are making the new observation of ``completeness'' of the representation \eqref{blochrep}, providing a rigorous basis to the normal form analysis. 
Under the substitution $w(x)=e^{i\xi x}v(x)$, $\chi=e^{i\xi X}y$ the system \eqref{e:lin-cell} is recognized as an inhomogeneous version of \eqref{eval} from Section~\ref{s:lop}.

At this point, we need some knowledge of the structure of the interior ODE's in \eqref{e:lin-cell}.
We begin with the following definition, analogous to \emph{consistent splitting} in standard Evans function theory \cite{AGJ}.

\begin{definition}[Local $H^1$ solvability]\label{def:local}
We say that at $\lambda\in\CM$ local $H^1$ solvability holds it there exists a constant $C$ such that for any $f\in H^1(0,X)$, the interior equations
$$
\lambda w+\d_x(A\,w)-E\,w\,=\,f
$$
have an $(n-1)$-dimensional affine space of $H^1(0,X)$ solutions whose minimum $H^1(0,X)$ norm is bounded by $C\|f\|_{H^1(0,X)}$.
\end{definition}

We will call the set $\Lambda$ of $\lambda\in\CM$ that satisfies Definition~\ref{def:local} the \emph{domain of $H^1$ local solvability}, echoing the classical Evans function terminology.  
It follows that for $\lambda\in\Lambda$, bounded invertibility of \eqref{e:lin-cell} depends continuously on $\xi\in[-\pi/X,\pi/X]$. 
Combined with the isometric properties of the integral transforms discussed above, this shows that bounded invertibility of \eqref{e:lin-line} is equivalent 
to the problem \eqref{e:lin-cell} being boundedly invertible for each $\xi\in [-\pi/X,\pi/X]$, justifying the  above normal form reduction; see for instance \cite[p.30-31]{R_HDR}. It follows that
for $\lambda\in\Lambda$ bounded invertibility of \eqref{e:lin-cell} is equivalent to an $n$-dimensional square matrix problem, encoded by  $\Delta(\lambda,\xi)\neq 0$ with $\Delta$ as in \eqref{det}, hence in particular bounded invertibility is equivalent to injectivity. Further, on $\Lambda$ where $D(\lambda,\xi)\neq0$ one may define resolvent-like operators as products of $D(\lambda,\xi)^{-1}$ and functions that are analytic in $\lambda$ on $\Lambda$ so that one may check that those resolvent-like operators have poles exactly where $D(\cdot,\xi)$ vanishes and that multiplicities as poles of resolvent-like operators agree with multiplicities as roots of $D(\cdot,\xi)$.

With the above in mind, we now make the following assumption, verified for \eqref{sv_intro} in Appendix~\ref{s:local}, regarding the structure of the set $\Lambda$ for the general
system \eqref{e:lin-cell}.

\begin{assumption}[Structure of $\Lambda$]\label{local}
At any $\lambda\in\CM$ such that $\Re \lambda \geq 0$ local $H^1$ solvability holds.  That is,
\[
\left\{\lambda\in\CM:\Re(\lambda)\geq 0\right\}\subset\Lambda.
\]
\end{assumption}

This justifies the Definition~\ref{specdef} of spectral instability given in Section~\ref{s:lop} as equivalent to the fact that for some $\lambda\in\CM$ with $\Re(\lambda)>0$ 
the problem \eqref{e:lin-line} is not boundedly invertible in $H^1(\tRR)\times\ell^2(\ZM)$. For the Saint Venant equations \eqref{sv_intro}, the domain of local $H^1$ solvability is shown in Appendix~\ref{s:local} to satisfy
$$
\left\{\lambda\,:\,\Re(\lambda)>-\frac{F-2}{4\sqrt{H_s}}\right\}\subset \Lambda,
$$
evidently verifying Assumption~\ref{local}, hence Definition~\ref{specdef}, for the St. Venant equations.  Moreover, in Appendix \ref{s:local} we also demonstrate that the complex right-half plane is 
included in domain of local $H^s$ solvability for \eqref{sv_intro} for any $s\geq1$.


Observe that we use local $H^1$ solvability at $\lambda\in\CM$ only to ensure that vanishing of the Evans-Lopatinsky determinant $\Delta(\lambda,\xi)$ for some $\xi$ is the only way in which the 
bounded solvability of the associated resolvent-like $H^1\times\ell^2$ problem may fail. To study and compute $\Delta$ for general systems \eqref{e:lin-cell}
the following, seemingly weaker, assumption is sufficient.

\begin{assumption}[Homogeneous local analytic solvability]\label{extension}
There exists an open connected set $\Lambda_0\subset\CM$ containing $\{\lambda\,:\,\Re(\lambda)\geq0\}$ on which one may choose a basis $w_1, \dots, w_{n-1}$ of the space of analytic solutions to
$$
\lambda w+\d_x(A\,w)-E\,w\,=\,0\,,
$$
that depends analytically on $\lambda$.
\end{assumption}

In Appendix~\ref{s:local} we check that for the special case of the Saint Venant equations \eqref{sv_intro}, one may choose
$$
\Lambda_0\ =\ \left\{\lambda\,:\,\Re(\lambda)>-\frac{F-2}{2\sqrt{H_s}}\right\}\,,
$$
hence verifying Assumption \ref{extension} in that case.

\subsection{Specialization to Saint-Venant equations}\label{s:special}

Preparatory to our further investigations, we now specialize the abstract theory of Sections \ref{s:linearizing}-\ref{s:lineartospectral} above to the case of the Saint Venant equations. System \eqref{sv_intro} can be put in form \eqref{ab} with $n=2$ as
$$
w=\begin{pmatrix}h\\q\end{pmatrix},\qquad
F(w)=\begin{pmatrix}q\\p(h)+\frac{q^2}{h}\end{pmatrix},\qquad
R(w)=\begin{pmatrix}0\\r(w)\end{pmatrix}
$$
where
$$
p(h)=\frac{h^2}{2F^2}\,,\qquad\qquad r(w)=h-\frac{q^2}{h^2}\,.
$$
Linearizing about $W:=(H,Q)^T$ in a co-moving frame and taking a Laplace transform in time, we obtain interior equations
\be\label{linearizedeq}
\displaystyle
\lambda h+\d_x(-c h+ q)=0,\quad \lambda q+\d_{x}\left( \big(\frac{2Q}{H}-c \big) q+ \big(-\frac{Q^2}{H^2}+\frac{H}{F^2} \big) h\right)= h-\frac{2Q}{H^2} q+2\frac{Q^2}{H^3} h
\ee
corresponding to \eqref{eval}(i), with $v_1=(h,q)^T$, 
$$
Av=\begin{pmatrix}-c h+ q\\(\frac{2Q}{H}-c) q+(-\frac{Q^2}{H^2}+p'(H)) h\end{pmatrix}
=\begin{pmatrix}-c h+ q\\(\frac{2Q}{H}-c) q+(-\frac{Q^2}{H^2}+\frac{H}{F^2}) h\end{pmatrix}\,,
$$
and
$$
Ev=\begin{pmatrix}0\\\d_hr(W) h+\d_qr(W)q\end{pmatrix}
=\begin{pmatrix}0\\ h-\frac{2Q}{H^2} q+2\frac{Q^2}{H^3} h\end{pmatrix}\,.
$$

To define an Evans-Lopatinsky determinant we need to choose a normalization of solutions to \eqref{linearizedeq}. It is convenient to enforce 
\be\label{eigen-normalization}
\begin{pmatrix}h\\q\end{pmatrix}(x_s)
\,=\,\frac{(F-2)}{2(F+1)}\sqrt{H_s}\begin{pmatrix}\left(\lambda+\frac{2}{3}\frac{F+1}{\sqrt{H_s}}\right)F\\\left(\lambda\sqrt{H_s}(F-1)+\frac23(F+1)^2\right)\end{pmatrix}
\ee
where $x_s$ denotes the sonic point in $(0,X)$, that is, the point in $(0,X)$ where $H(x_s)=H_s$. The facts that this parametrization is possible and that the analytic dependence of values at $x_s$ transfers to the joint analyticity of $(h,q)$ follows from the analysis in Appendix~\ref{s:local}. The  above normalization is chosen to ensure that at $\lambda=0$, $(h,q)\equiv(H',Q')$.

With such a choice in hands, as prescribed in \eqref{det}, we introduce the Evans-Lopatinsky determinant:
\begin{align}\label{lopatinskii}
\Delta(\lambda, \xi)&=
\det \begin{pmatrix}
\{\lambda H\} & \{-c h+ q\}_\xi\\
\{\lambda Q-(H-\frac{Q^2}{H^2})\}&\{(-\frac{Q^2}{H^2}+\frac{H}{F^2}) h+(\frac{2Q}{H}-c) q\}_\xi
\end{pmatrix}.
\end{align}
where here we recall the notation in $\{\cdot\}$ and $\{\cdot\}_\xi$ in \eqref{jumpnote}.
Similarly as in \eqref{scaleinv}, we may reduce the spectral problem to the case when $H_s=1$ by performing the rescaling 
$$
\lambda=\frac{\underline{\lambda}}{\sqrt{H_s}}\,,\qquad
\xi=\frac{\underline{\xi}}{H_s}\,,\qquad
h=H_s\,\underline{h}\left(\frac{\cdot}{H_s}\right)\,,\qquad
q=H_s^{3/2}\,\underline{q}\left(\frac{\cdot}{H_s}\right)\,.
$$
This results (with obvious notation) in
$$
\Delta(\lambda,\xi)\,=\,H_s^{5/2}\underline{\Delta}(\underline{\lambda},\underline{\xi})\,.
$$

\section{Low-frequency analysis}\label{s:Lowboundary}

\subsection{Comparison with modulation equations}\label{s:comparison}
The goal of this section is to obtain expansions of the Evans-Lopatinsky determinant $\Delta(\lambda,\xi)$ when $(\lambda,\xi)$ is sufficiently small.  To this end,
we exploit cancellations that have their robust origin in the structure of neighboring periodic traveling waves. In this way, we prove Propositions~\ref{factor} and~\ref{serreprop} and their corollaries at once.

With this in mind, on one hand we temporarily translate profiles to enforce that sonic points occur at $jX$, $j\in\ZM$. Thus jumps are now located at $x_-+jX$, $j\in\ZM$, where $x_-=-x_s$, $x_s$ being the original position of the sonic point in $(0,X)$. We also modify the definition of jump accordingly. Note that the reduced spectral problems are now posed on $(x_-,x_-+X)$. However, for readability's sake, we keep notation unchanged. The gain is that with this new normalization of invariance by translation it is clearly apparent that profiles are parametrized in terms of\footnote{Incidentally we observe that we could also use a parametrization by $(c,X)$, a more robust choice that would slightly simplify the present computations but would bring us farther from choices of other sections of the paper.} $(H_s,H_-)$ as 
$$
(H,Q)(x)=(H,Q)(x;H_s)\,,\qquad x\in(x_-,x_-+X) 
$$
with
$$
x_-=x_-(H_s,H_-)\,,\qquad X=X(H_s,H_-)\,,\qquad c=c(H_s)\,,\qquad q_0=q_0(H_s)\,.
$$
In particular the interior shape of profiles is obviously independent of $H_-$, and the quantities $x_-$ and $X$ are implicitly defined from it by
$$
H(x_-;H_s)=H_-\,,\qquad \left(\frac{H^2}{2F^2}+\frac{Q^2}{H}-cQ\right)(x_-+X)=\left(\frac{H^2}{2F^2}+\frac{Q^2}{H}-cQ\right)(x_-).
$$

On the other hand, let us denote the solutions $(h,q)$ of \eqref{linearizedeq} as $(h,q)(\cdot;\lambda)$, or just $(h,q)(\lambda)$, to mark dependence on $\lambda$.  
Noting that differentiating spatially the interior profile equations \eqref{smooth} one finds that both $(h,q)(\cdot;0)$ and $(H',Q')$ satisfy the same first-order system and that by normalization \eqref{eigen-normalization} they share the same value at the sonic point $0$, we conclude by uniqueness that 
$$
(h,q)(\cdot;0)\,=\,(H',Q')\,.
$$
Let us now introduce the analytic functions
$$
(\tilde h,\tilde q)(\cdot;\lambda)\,=\,\lambda^{-1}\,((h,q)(\cdot;\lambda)-(H',Q')),
$$
Now, first integrating \eqref{linearizedeq} then expanding with $(\tilde h,\tilde q)$ and using interior profile equations yields
\begin{align*}
\Delta(\lambda,\xi)&=\det 
\begin{pmatrix}
\{\lambda H\} & \{-c h+ q\}_\xi\\
\{\lambda Q-(H-\frac{Q^2}{H^2})\}&\{(-\frac{Q^2}{H^2}+\frac{H}{F^2}) h+(\frac{2Q}{H}-c) q\}_\xi
\end{pmatrix}\\
&=\lambda\,\det \begin{pmatrix}
\{ H\} &\ds-\lambda\int_{x_-}^{x_-+X}\tilde h\,+\,(e^{i\xi X}-1)(c \tilde h- \tilde q)(x_-)\\[1em]
\{\lambda Q-(H-\frac{Q^2}{H^2})\}
&\lambda\{(-\frac{Q^2}{H^2}+\frac{H}{F^2})\tilde h+(\frac{2Q}{H}-c) \tilde q\}_\xi
\,+\,\lambda\{Q\}\\
&\,-\,(e^{i\xi X}-1)(H-\frac{Q^2}{H^2})(x_-)
\end{pmatrix},\\
\end{align*}
where here we have used the fact that $\{f\}=\{f\}_\xi+(e^{i\xi X}-1)f(x_-)$ as well as the observation that
$$
\left(-\frac{Q^2}{H^2}+\frac{H}{F^2}+c^2\right) H'+\left(\frac{2Q}{H}-c\right) Q'
=\left(-\frac{q_0^2}{H^2}+\frac{H}{F^2}\right) H'=H-\frac{Q^2}{H^2}\,.
$$
This motivates the introduction of the factored determinant
\be\label{e:factored}
\hat{\Delta}(\lambda,\xi)
=\det \begin{pmatrix}
\{ H\} &\ds-\lambda\int_{x_-}^{x_-+X}\hspace{-0.5em}\tilde h\,+\,(e^{i\xi X}-1)(c \tilde h- \tilde q)(x_-)\\[1em]
\{\lambda Q-(H-\frac{Q^2}{H^2})\}
&\lambda\{(-\frac{Q^2}{H^2}+\frac{H}{F^2})\tilde h+(\frac{2Q}{H}-c) \tilde q\}_\xi
\,+\,\lambda\{Q\}\\
&\,-\,(e^{i\xi X}-1)(H-\frac{Q^2}{H^2})(x_-)
\end{pmatrix}\,.
\ee

Now, note that\footnote{Here, $\tilde h(0)$ denotes the function $\tilde h(\cdot,\lambda)$ at $\lambda=0$, and similarly for $\tilde q(0)$.} $(\tilde{h}(0),\tilde{q}(0))$ satisfies the system 
\begin{align*}
&H'+\d_x(-c \tilde h(0)+ \tilde q(0))=0,\\
&Q'+\d_{x}\left( \left(\frac{2Q}{H}-c \right)\tilde q(0)+ \left(-\frac{Q^2}{H^2}+\frac{H}{F^2} \right) \tilde h(0)\right)=  \tilde h(0)-\frac{2Q}{H^2}  \tilde q(0)+2\frac{Q^2}{H^3}\tilde h(0),
\end{align*}
which is recognized as the linearized equation \eqref{linearizedeq} at $\lambda=0$ with inhomogeneous forcing term $(H',Q')$, which is exactly the $X$-periodic kernel of \eqref{linearizedeq}.
Further, differentiating the interior profile equation \eqref{smooth} with respect to $H_s$ one finds this system is also solved by
$$-\frac{1}{c'(H_s)}(\d_{H_s}H,\d_{H_s}Q)$$
and hence this function differs from $(\tilde{h}(0),\tilde{q}(0))$ by a constant multiple of  $(H',Q')$.  Consequently, from \eqref{e:factored} we find 
for $|\lambda|\ll 1$ and uniformly in $\xi\in[-\pi/X,\pi/X]$ that
\begin{align*}
&-c'(H_s)\hat{\Delta}(\lambda,\xi)\\
&\ =\det \begin{pmatrix}
\{ H\} &\ds-\lambda\int_{x_-}^{x_-+X}\hspace{-1em}\d_{H_s}H\,+\,i\xi X(c\d_{H_s}H-\d_{H_s}Q)(x_-)\\[1em]
-\{H-\frac{Q^2}{H^2}\}
&\lambda\{(-\frac{Q^2}{H^2}+\frac{H}{F^2})\d_{H_s}H+(\frac{2Q}{H}-c)\d_{H_s}Q\}\\
&\ -\,\lambda c'(H_s)\{Q\}\,+\,(e^{i\xi X}-1)c'(H_s)(H-\frac{Q^2}{H^2})(x_-)
\end{pmatrix}+O(|\lambda|\,(|\lambda|+|\xi|))\\
&\ =\lambda\Big(\{ H\}\Big\{\Big(-\frac{Q^2}{H^2}+\frac{H}{F^2}\Big)\d_{H_s}H+\Big(\frac{2Q}{H}-c\Big)\d_{H_s}Q\Big\}
-c'(H_s)\,c\,\{H\}-\Big\{H-\frac{Q^2}{H^2}\Big\}\int_{x_-}^{x_-+X}\hspace{-1em}\d_{H_s}H\Big)\\
&\ \ +(e^{i\xi X}-1)\left(\Big\{H-\frac{Q^2}{H^2}\Big\}\Big(-c'(H_s)H(x_-)+q_0'(H_s)\Big)
+c'(H_s)\{ H\}\Big(H-\frac{Q^2}{H^2}\Big)(x_-)\right)+O(|\lambda|\,(|\lambda|+|\xi|))
\end{align*}
where we have used the fact that $Q=cH-q_0$.  This verifies the expansion in Proposition \ref{factor}.  In particular,
it follows that, using the notation in \eqref{fact2}, $\alpha_0\in\RM$ and $\alpha_1\in\RM i$ are explicitly determined.

Next, we aim to prove Proposition \ref{serreprop} by comparing the above expansions with the dispersion relation from the Whitham system \eqref{whit}.  To this end, 
observe that, with notation from Section~\ref{s:Whitham},
$$
A^0=\begin{pmatrix}\ds
-\frac{\d_{H_s}X}{X^2}\int_{x_-}^{x_-+X}\hspace{-1em}H
+\frac{1}{X}\int_{x_-}^{x_-+X}\hspace{-1em}\d_{H_s}H&\vline&\ds
-\frac{\d_{H_-}X}{X^2}\int_{x_-}^{x_-+X}\hspace{-1em}H+\frac{\{H\}}{X}\d_{H_-}x_-\\
\ds+\frac{\{H\}}{X}\d_{H_s}x_-+H(x_-+X)\,\d_{H_s}X
&\vline&\ds
+H(x_-+X)\,\d_{H_-}X\\
\hline\\[-1.2em]\ds
-\frac{\d_{H_s}X}{X^2}&\vline&\ds-\frac{\d_{H_-}X}{X^2}\end{pmatrix}
$$
and
$$
A^1=c\,A^0
+\begin{pmatrix}\ds\frac{c'(H_s)}{X}\int_{x_-}^{x_-+X}\hspace{-1em}H-q_0'(H_s)&0\\
\ds\frac{c'(H_s)}{X}&0\end{pmatrix}\,.
$$
Therefore, a direct calculation yields
\begin{align*}
&\det\left(\left(\lambda-c\frac{e^{i\xi X}-1}{X}\right)A^0+\frac{e^{i\xi X}-1}{X} A^1\right)\\
&\qquad=\det
\begin{pmatrix}\ds
\frac{\lambda}{X}\int_{x_-}^{x_-+X}\hspace{-1em}\d_{H_s}H
+\lambda\frac{\{H\}}{X}\d_{H_s}x_-&\ds
\lambda\frac{\{H\}}{X}\d_{H_-}x_-+\lambda\,\frac{H(x_-+X)}{X}\,\d_{H_-}X\\\ds
+\lambda\,\frac{H(x_-+X)}{X}\,\d_{H_s}X
-\frac{e^{i\xi X}-1}{X} q_0'(H_s)&\\[1em]\ds
-\lambda\frac{\d_{H_s}X}{X^2}
+\frac{e^{i\xi X}-1}{X}\,\frac{c'(H_s)}{X}
&\ds-\lambda\frac{\d_{H_-}X}{X^2}\end{pmatrix}\\
&\qquad=\frac{\lambda}{X^3}\Big[
\lambda
\left(\{H\}\,\big(\d_{H_s}X\,\d_{H_-}x_-\,-\,\d_{H_-}X\,\d_{H_s}x_-\big)
-\d_{H_-}X\,\int_{x_-}^{x_-+X}\hspace{-1em}\d_{H_s}H\right)\\
&\qquad\qquad+(e^{i\xi X}-1)\left(\d_{H_-}X\,q_0'(H_s)
-c'(H_s)\,\{H\}\,\d_{H_-}x_--\,H(x_-+X)\,\,\d_{H_-}X\,\,c'(H_s)\right)\Big]\,.
\end{align*}
To compare this to the expansion of $\Delta(\lambda,\xi)$ above, we note that by differentiating 
the defining equation for $X$, one finds
\begin{align*}
\left(H-\frac{Q^2}{H^2}\right)(x_-+X)\,\d_{H_-}X
&=-\left\{H-\frac{Q^2}{H^2}\right\}\d_{H_-}x_-\,,\\
\left(H-\frac{Q^2}{H^2}\right)(x_-+X)\,\d_{H_s}X&=
-\left\{H-\frac{Q^2}{H^2}\right\}\d_{H_s}x_-
+c'(H_s)\ c\{H\}\\
&\qquad-\left\{\left(\frac{H}{F^2}-\frac{Q^2}{H^2}\right)\d_{H_s}H+\left(\frac{2Q}{H}-c\right)\d_{H_s}Q\right\}
\end{align*}
so that, in particular,
\begin{align*}
&\left(H-\frac{Q^2}{H^2}\right)(x_-+X)\,\big(\d_{H_s}X\,\d_{H_-}x_-\,-\,\d_{H_-}X\,\d_{H_s}x_-\big)\\
&\qquad=
\d_{H_-}x_-\left(c'(H_s)\ c\{H\}
-\left\{\left(\frac{H}{F^2}-\frac{Q^2}{H^2}\right)\d_{H_s}H+\left(\frac{2Q}{H}-c\right)\d_{H_s}Q\right\}\right)
\end{align*}
and
\begin{align*}
&\left(H-\frac{Q^2}{H^2}\right)(x_-+X)\,\left(\d_{H_-}X\,q_0'(H_s)
-c'(H_s)\,\{H\}\,\d_{H_-}x_--\,H(x_-+X)\,\,\d_{H_-}X\,\,c'(H_s)\right)\\
&\qquad=
-\d_{H_-}x_-\,\left(\left\{H-\frac{Q^2}{H^2}\right\}\,(q_0'(H_s)-\,H(x_-+X)\,c'(H_s))+c'(H_s)\,\{H\}\,\left(H-\frac{Q^2}{H^2}\right)(x_-+X)\right)\\
&\qquad=
-\d_{H_-}x_-\,\left(\Big\{H-\frac{Q^2}{H^2}\Big\}\Big(-c'(H_s)H(x_-)+q_0'(H_s)\Big)
+c'(H_s)\{ H\}\Big(H-\frac{Q^2}{H^2}\Big)(x_-)\right)\,.
\end{align*}
Therefore as $\lambda\to0$, uniformly in $\xi\in[-\pi/X/\pi/X]$, we find that
\[
\Delta(\lambda,\xi)
\,=\,
\frac{X^3\left(H-\frac{Q^2}{H^2}\right)(x_-+X)}{c'(H_s)\,\d_{H_-}x_-}
\det\left(\left(\lambda-c\frac{e^{i\xi X}-1}{X}\right)A^0+\frac{e^{i\xi X}-1}{X}A^1\right)
+O(|\lambda|^2\,(|\lambda|+|\xi|))\,.
\]
This calculation proves Proposition \ref{serreprop}, thus rigorously justifying the formal Whitham modulation system \eqref{whit} as a predictor of low-frequency
stability for periodic traveling wave solutions of the inviscid St. Venant equation \eqref{sv_intro}.

\subsection{Low-frequency stability boundaries}\label{s:lf}

During the computations in the previous section, we found that the Evans-Lopatinsky determinant can be decomposed for sufficiently small $\lambda$, uniformly in
$\xi\in[-\pi/X,\pi/X]$ as
$$
\Delta(\lambda,\xi)=\lambda\,\hat{\Delta}(\lambda,\xi)
$$
with
$$
\hat{\Delta}(\lambda,\xi)=
\lambda\d_\lambda\hat{\Delta}(0,0)
+\frac{e^{i\xi X}-1}{i\,X}\d_\xi\hat{\Delta}(0,0)
+O(|\lambda|(|\lambda|+|\xi|))\,.
$$
In particular, observe that if $\d_\xi\hat{\Delta}(0,0)=0$ then $0$ is actually a double root of $\Delta(\cdot,\xi)$ for any $\xi$. 
This suggests that when $\d_\xi\hat{\Delta}(0,0)$ changes sign, a loop of spectrum, parametrized by $\xi$, may switch from one half-plane to the other.  Consequently,
in the absence of additional instabilities, the vanishing of $\d_\xi\hat{\Delta}(0,0)$ may indicate a transition from stability to instability (or vice versa) of the underlying periodic solution.  
Later on we shall offer another partial support to 
this scenario by proving that the parity of the number of real positive subharmonic --- i.e. corresponding to $\xi=\pi/X,-\pi/X$ --- eigenvalues changes when this occurs.

Likewise, if $\d_\lambda\hat{\Delta}(0,0)$ were changing sign somewhere then this may indicate a change in the parity of the number of real positive co-periodic --- i.e. corresponding to $\xi=0$ --- eigenvalues\footnote{Such a parity change would follow immediately provided one can determine, say, the sign of $\Delta(\lambda,0)$ for large $\lambda\in\RM$.}. 
However, as discussed in the introduction, we have checked numerically that $\d_\lambda\hat{\Delta}(0,0)$ never vanishes over the entire range of existence of periodic traveling 
wave solutions of \eqref{sv_intro}.  Motivated by this numerical observation, we now restrict our theoretical discussion to the case when  $\d_\lambda\hat{\Delta}(0,0)\neq 0$.

From the above expansion of $\Delta(\lambda,\xi)$,  a direct computation shows that the non-trivial small root of $\Delta(\cdot,\xi)$ expands for $|\xi|\ll 1$ as 
\begin{equation}\label{second}
\lambda(\xi)=-i\alpha\xi-\beta\xi^2+O(|\xi|^3),
\end{equation}
with $\alpha$ and $\beta$ real numbers\footnote{The reality of $\alpha,\beta$ follows by the explicit expansion of $\hat\Delta(\lambda,\xi)$ in the previous section.} given by
$$
\alpha=\frac{\d_\xi\hat{\Delta}(0,0)}{i\d_\lambda\hat{\Delta}(0,0)}
$$
and
$$
\beta=\alpha\,\left(-\frac12X-\frac12\alpha\frac{\d^2_\lambda\hat{\Delta}(0,0)}{\d_\lambda\hat{\Delta}(0,0)}-i\frac{\d^2_{\lambda\xi}\hat{\Delta}(0,0)}{\d_\lambda\hat{\Delta}(0,0)}\right)
=:\alpha\gamma\,.
$$
In particular, a transition instability/stability of small eigenvalues associated with small Floquet numbers necessarily  takes place whenever, by changing parameters of wave,
the parameter $\beta$ changes sign.
For this reason we use the vanishing of $\beta$ as an indicator of a low-frequency stability boundary.  Furthermore, since $\beta=\alpha\,\gamma$, 
we distinguish between two types of potential low-frequency stability boundaries:
\begin{enumerate}
\item{low-frequency stability boundary I: when $\alpha$ vanishes, and in this case we know that a full loop of spectrum passes through the origin;}
\item{low-frequency stability boundary II: when $\alpha$ does not vanish but $\gamma$ changes sign, and we at least know that the curvature at the origin of the non trivial small root changes sign.}
\end{enumerate}

We now explain how to carry out the computations of the quantities $\d_\lambda\hat{\Delta}(0,0)$, $\alpha$ and $\gamma$.  First we remind the reader that all coefficients in $A^0$ and $A^1$ from Section~\ref{s:Whitham} may be explicitly computed by simple but tedious algebraic manipulations. In particular, both $\d_\xi\hat{\Delta}(0,0)$ and $\d_\lambda\hat{\Delta}(0,0)$ have explicit expressions in terms of $H_-$, $H_s$ and $F$. The expression of the latter is both long and quite daunting, therefore we omit it and resort to numerical evaluation to determine its sign. Let us only mention that its ugliest parts involve $\ell$ and $\ell\overline{\underline H}$.

The evaluation of $\d_\xi\hat{\Delta}(0,0)$ provides a nicer form,
\begin{align}\label{alpha_1}
&\frac{\d_\xi\hat{\Delta}(0,0)}{iX}=
\Big\{H-\frac{Q^2}{H^2}\Big\}\Big(H_--\frac{q_0'(H_s)}{c'(H_s)}\Big)
-\{ H\}\Big(H_--\frac{(c(H_s)H_--q_0(H_s))^2}{H_-^2}\Big)\\\nonumber
&=
\frac{H_s^2\,\{\underline H\}}{F^2(F+1)\underline H_-^2\underline H_+^2}\Big[
\left(F^2\underline H_-^2\underline H_+^2
-2(F+1)\underline H_-\underline H_+
+(\underline H_-+\underline H_+)
\right)\left((F+1)\underline H_--3\right)\\\nonumber
&\qquad\qquad\qquad\qquad
-(F+1)\underline H_+^2\left(F^2\underline H_-^3-(F+1)^2\underline H_-^2
+2(F+1)\underline H_--1\right)\Big]\\\nonumber
&=\frac{H_s^2\,\{\underline H\}}{F^2(F+1)\underline H_-^2\underline H_+^2}\Big[
F^3\underline H_-^2\underline H_+^2
-2F^2\underline H_-\underline H_+(\underline H_+-\underline H_-)\\\nonumber
&\qquad\qquad\qquad\qquad+F(3\underline H_-^2\underline H_+^2-4\underline H_-\underline H_+(\underline H_-+\underline H_+)+7\underline H_-\underline H_++\underline H_-^2+\underline H_+^2)\\\nonumber
&\qquad\qquad\qquad\qquad
+\underline H_-^2\underline H_+^2
-2\underline H_-\underline H_+(\underline H_-+\underline H_+)
+\underline H_-^2+\underline H_+^2
+7\underline H_-\underline H_+
-3(\underline H_-+\underline H_+)
\Big]\,.
\end{align}
Once the sign of $\d_\lambda\hat{\Delta}(0,0)$ is known, determining the sign of $\alpha$ is thus reduced to the study of the sign of a third order polynomial in $F$ with coefficients given as polynomials in $\underline H_-$ and $\underline H_+$, thus as functions of $\underline{H}_-$. This task may be achieved by exact algebraic computations. In particular, by root solving symbolically in MATLAB we have obtained the low-frequency stability boundary~I expounded in Figure~\ref{fig_low_boundaries}(a).   While we have restricted to the case when $H_s=1$ in Figure \ref{fig_low_boundaries}(a), we recall that 
conclusions for general values of $H_s$ can be deduced thanks to the scaling invariance 
\eqref{scaleinv}.

In contrast, the sign of $\gamma$ is not directly related to the first-order\footnote{Yet we expect that it could be related to a suitable second-order system as in \cite{NR,JNRZ-RD-W,JNRZ13,KR}.} modulation system \eqref{whit}, 
and it is unclear to us whether $\gamma$ may be obtained in closed form.  However, the only missing piece in obtaining a closed form expression for $\gamma$ is knowledge of $\d_\lambda(\tilde h,\tilde q)(\cdot;0)=\tfrac12\d_\lambda^2(h,q)(\cdot;0)$ (with notation from Section~\ref{s:comparison}). By differentiating twice defining equations for $(h,q)(\cdot;\lambda)$, we find that $\d_\lambda(\tilde h,\tilde q)(x_s;0)=0$ and that
\begin{align*}
&\tilde h(0)+\d_x(-c \d_\lambda\tilde h(0)+ \d_\lambda\tilde q(0))=0,\\
&\tilde q(0)+\d_{x}\left( \left(\frac{2Q}{H}-c \right)\d_\lambda\tilde q(0)+ \left(-\frac{Q^2}{H^2}+\frac{H}{F^2} \right) \d_\lambda\tilde h(0)\right)=  \d_\lambda\tilde h(0)-\frac{2Q}{H^2} \d_\lambda\tilde q(0)+2\frac{Q^2}{H^3}\d_\lambda\tilde h(0)
\end{align*}
with
$$
(\tilde h,\tilde q)(\cdot;0)
=-\frac{1}{c'(H_s)}\,(\d_{H_s}H,\d_{H_s}Q)+\frac{3(F+2)}{2(F+1)(F-2)}\sqrt{H_s}\,(H',Q')\,.
$$
In the foregoing we have also used \eqref{eigen-normalization} to determine the multiple of $(H',Q')$ in $(\tilde h,\tilde q)(\cdot;0)$. To determine $\gamma$ we have indeed used  a numerical approximation of $\d_\lambda(\tilde h,\tilde q)(\cdot;0)$ obtained by solving the above singular ODE Cauchy problem. Our numerical outcome is that $\gamma$ vanishes only near the lower boundary of the existence domain, that is, the potential low-frequency stability boundary II actually sits between the low-frequency stability boundary I and the lower boundary of the existence domain. As $F$ increases the low-frequency stability boundary II gets even closer to the lower boundary of the existence domain. See Figure~\ref{fig_low_boundaries}(b) for an enlarged figure of the low-frequency stability boundary II near $F=2$.

\begin{figure}[htbp]
\begin{center}
\includegraphics[scale=.6]{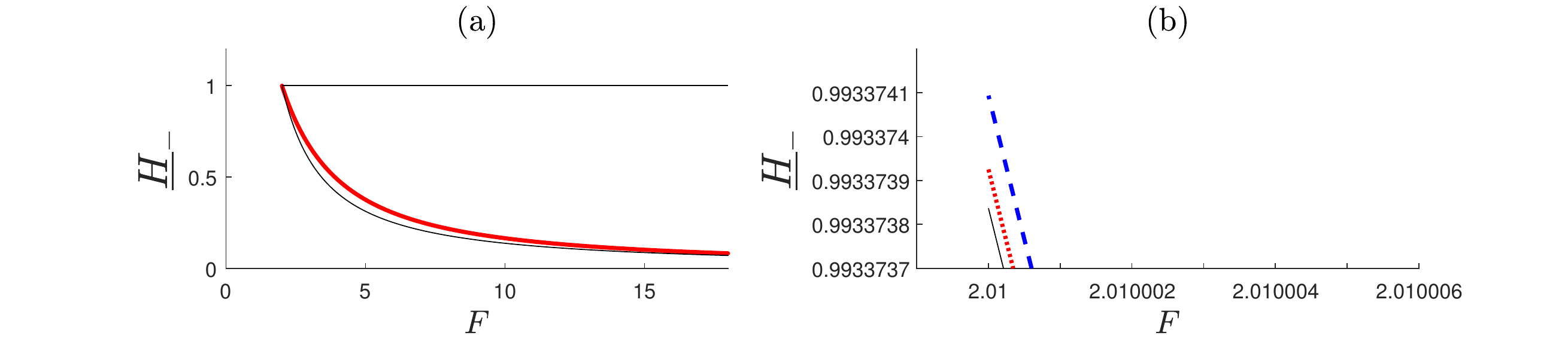}
\end{center}
\caption{(a) A plot of the limits of the domain of existence, as well as the low-frequency stability boundaries (recall the labeling scheme from Figure \ref{fig_scheme}).  As the low-frequency
stability boundary II is nearly indistinguishable from the lower existence boundary, we show in (b) an enlargement of (a) near $F=2$.  In (b), we are showing
the lower existence boundary, the low-frequency stability boundary II, and the mid-frequency stability boundary.}
\label{fig_low_boundaries}\end{figure}

\section{High-frequency analysis}

In this section we analyze the behavior of solutions of to the interior eigenvalue problems \eqref{linearizedeq}-\eqref{eigen-normalization} when $|\lambda|\to\infty$.
These conclusions will then be used to deduce high- and 
medium-frequency
stability indices. 

Though detailed computations depend on specificities of \eqref{sv_intro}, the method of our analysis applies to general hyperbolic systems of balance laws \eqref{ab}. Indeed, the tenet of the argument is that expansions when $|\lambda|\to\infty$ perturb from purely hyperbolic modes (of the system without source term) and  that the first correction to this picture encodes how the relaxation damps these modes. Note that this occurs even when, as here, the relaxation is degenerate in the sense that it appears only in one equation of the system. In particular the condition on the index $\mathbf{I}$ introduced below is analogous to the averaged slope condition of the viscous case that was shown to always hold in \cite{RZ}.

\subsection{High-frequency expansion}
As in Section~\ref{s:application}, we begin by changing the unknown from $(h,q)$ to $W:=(w_1,w_2)=(ch-q,h)$, so that the system \eqref{linearizedeq} becomes
$$
\left(p'(H)-\frac{q_0^2}{H^2}\right)\,W'
=(\lambda\,B_0+B_1)\,W,
$$
where
\begin{align}
B_0&=\begin{pmatrix}0&p'(H)-\frac{q_0^2}{H^2}\\1&-\frac{2q_0}{H}\end{pmatrix}\nonumber\\
B_1&=\begin{pmatrix}0&0\\
-\d_qr(H,Q)-\left(\frac{2q_0}{H}\right)'
&-\left(p'(H)-\frac{q_0^2}{H^2}\right)'+\d_hr(H,Q)+c\d_qr(H,Q)\end{pmatrix},\label{e:B1}
\end{align}
and normalization \eqref{eigen-normalization} becomes
$$
W(x_s)=\frac{(F-2)}{2(F+1)}\sqrt{H_s}\begin{pmatrix}
2\lambda\sqrt{H_s}\\
\left(\lambda+\frac{2}{3}\frac{F+1}{\sqrt{H_s}}\right)F\end{pmatrix}
$$

Now, note that $B_0=P_0\,\Gamma\,P_0^{-1}$ with
$$
\Gamma=\begin{pmatrix}\mu_+&0\\0&\mu_-\end{pmatrix}\,,\qquad
\mu_{\pm}=-\frac{q_0}{H}\pm\sqrt{p'(H)}\,,
$$
and
\begin{align}
P_0&=\begin{pmatrix}\frac{q_0}{H}+\sqrt{p'(H)}&\frac{q_0}{H}-\sqrt{p'(H)}\\
1&1\end{pmatrix}\label{e:P0}\\
P_0^{-1}&=\frac{1}{2\sqrt{p'(H)}}
\begin{pmatrix}1&\sqrt{p'(H)}-\frac{q_0}{H}\\
-1&\sqrt{p'(H)}+\frac{q_0}{H}\end{pmatrix}\nonumber\,.
\end{align}
By setting $W_0=P_0^{-1}W$, the above system is transformed into
$$
\left(p'(H)-\frac{q_0^2}{H^2}\right)\,W_0'
=(\lambda\,\Gamma+C_0)\,W_0
$$
with
\be
C_0=P_0^{-1}B_1P_0-\left(p'(H)-\frac{q_0^2}{H^2}\right)P_0^{-1}(P_0)'\,.\label{e:C0}
\ee
Observe that, by direct calculations similar to those in Section~\ref{s:application}, we have 
$$
\Gamma(x_s)=\begin{pmatrix}
0&0\\
0&-\frac{2\sqrt{H_s}}{F}\end{pmatrix}
\qquad\textrm{and}\qquad
C_0(x_s)=\begin{pmatrix}
0&0\\
\frac43\frac{(F+1)}{F}&0\end{pmatrix}
$$
and that the normalization \eqref{eigen-normalization} becomes
$$
W_0(x_s)=
\frac{F(F-2)}{4(F+1)}
\begin{pmatrix}1&0\\
-1&\frac{2\sqrt{H_s}}{F}\end{pmatrix}
\begin{pmatrix}
2\lambda\sqrt{H_s}\\
\left(\lambda+\frac{2}{3}\frac{F+1}{\sqrt{H_s}}\right)F\end{pmatrix}
=\frac{F(F-2)}{4(F+1)}
\begin{pmatrix}2\lambda\sqrt{H_s}\\
\frac43(F+1)
\end{pmatrix}\,.
$$

We now set 
$$
P_1(\cdot;\lambda)=I_2+\frac1\lambda\tilde P_1\,,\qquad 
\tilde P_1=\begin{pmatrix}0&-\frac{(C_0)_{1,2}}{\mu_+-\mu_-}\\
\frac{(C_0)_{2,1}}{\mu_+-\mu_-}&0\end{pmatrix}
$$
to ensure commutator relations
$$
[\lambda\Gamma,P_1(\cdot;\lambda)]
=[\Gamma,\tilde P_1(\cdot;\lambda)]
=-\begin{pmatrix}0&(C_0)_{1,2}\\
(C_0)_{2,1}&0\end{pmatrix}\,.
$$
For $|\lambda|$ sufficiently large, $P_1(\cdot;\lambda)$ is point-wise invertible.  For such $\lambda$, setting $W_1=P_1^{-1}W_0$ transforms the interior spectral system into
$$
\left(p'(H)-\frac{q_0^2}{H^2}\right)\,W_1'
=\left(\begin{pmatrix}\lambda\,\mu_++(C_0)_{1,1}&0\\
0&\lambda\,\mu_-+(C_0)_{2,2}\end{pmatrix}
+\frac1\lambda C_1(\cdot;\lambda)\right)\,W_1
$$
with
$$
C_1(\cdot;\lambda)
=-P_1^{-1}\tilde P_1 \begin{pmatrix}(C_0)_{1,1}&0\\
0&(C_0)_{2,2}\end{pmatrix}+P_1^{-1}C_0\tilde P_1
-\left(p'(H)-\frac{q_0^2}{H^2}\right)P_1^{-1}(\tilde P_1)'\,.
$$
Note that $C_1$ vanishes at $x_s$ and that the normalization \eqref{eigen-normalization} now reads
$$
W_1(x_s)
=\frac{F(F-2)}{4(F+1)}
\begin{pmatrix}1&0\\
-\frac1\lambda\frac23\frac{(F+1)}{\sqrt{H_s}}&1\end{pmatrix}
\begin{pmatrix}2\lambda\sqrt{H_s}\\\frac43(F+1)\end{pmatrix}
=\frac{F(F-2)}{4(F+1)}\begin{pmatrix}2\lambda\sqrt{H_s}\\0\end{pmatrix}\,.
$$

In order not to suggest spurious singularities, we introduce the following smooth factorizations:
\begin{align}
\mu_+&=\left(p'(H)-\frac{q_0^2}{H^2}\right)\tilde\mu_+\,,
&&&
(C_0)_{1,1}&=\left(p'(H)-\frac{q_0^2}{H^2}\right)\tilde \gamma_+\,,\label{e:tilde}\\
(C_0)_{2,2}&=\left(p'(H)-\frac{q_0^2}{H^2}\right)\tilde \gamma_-\,,&&&
C_1&=\left(p'(H)-\frac{q_0^2}{H^2}\right)\tilde C_1\,.\nonumber
\end{align}
The above computations suggests that $W_1(x)$ behaves to leading order as
$$
\frac{F(F-2)}{4(F+1)}\begin{pmatrix}\ds
2\lambda\sqrt{H_s}\,e^{\int_{x_s}^x(\lambda\,\tilde\mu_+(y)+\tilde \gamma_+(y))d y}\\0\end{pmatrix}
$$
as $|\lambda|\to\infty$. To prove this claim, we introduce 
$$
\widetilde W_1 (x)=e^{-\int_{x_s}^x(\lambda\,\tilde\mu_+(y)+\tilde \gamma_+(y))d y}W_1(x)
$$
and note that $\widetilde W_1$ satisfies the fixed point equation $\widetilde W_1=\Phi(\widetilde W_1)$, where $\Phi$ is defined by 
\begin{align*}
&\Phi(\widetilde W)(x)
=\frac{F(F-2)}{4(F+1)}\begin{pmatrix}2\lambda\sqrt{H_s}\\0\end{pmatrix}\\
&+\frac1\lambda
\int_{x_s}^x
e^{-\int_y^x(\lambda\,\tilde\mu_+(z)+\tilde \gamma_+(z))d z}
\begin{pmatrix}1&0\\
0&e^{-\lambda\int_y^x\frac{\mu_+-\mu_-}{p'(H)-q_0^2\,H^{-2}}(z)\,d z-
\int_y^x(\tilde \gamma_+-\tilde \gamma_-)(z)\,dz}\end{pmatrix}
\tilde C_1(y)\,\widetilde W (y)\,d y\,.
\end{align*}
Since both functions
$$
\frac{\mu_+-\mu_-}{p'(H)-q_0^2\,H^{-2}}(x)
\qquad\textrm{and}\qquad
\tilde\mu_+(x)
$$
have the same sign as $(x-x_s)$ over $[0,X]$, it follows that $\Phi$ is a Lipschitz function on $C^0([0,X];\CM^2)$ with Lipschitz constant $K/|\lambda|$ for some constant $K>0$ uniform on $\lambda$ when $\Re(\lambda)\geq-\theta$, with $\theta\in [0,(F-2)/2\sqrt{H_s})$ held fixed, and $\lambda$ is sufficiently large. In particular, under such conditions on $\lambda$ the function
$\Phi$ is a strict contraction so that, in particular, for any function $\widetilde W\in C^0([0,X];\CM^2)$
$$
\|\widetilde W_1-\widetilde W\|_{L^\infty(0,X)}
\leq \frac{1}{1-K\,|\lambda|^{-1}}\|\widetilde W-\Phi(\widetilde W)\|_{L^\infty(0,X)}
$$
where $K$ is a uniform constant. Thus with the same constant $K$ we have the estimates
$$
\|\widetilde W_1\|_{L^\infty(0,X)}
\leq \frac{|\lambda|\sqrt{H_s}}{1-K\,|\lambda|^{-1}}
\frac{F(F-2)}{2(F+1)}
$$
and
$$
\left\|\widetilde W_1-\frac{F(F-2)}{4(F+1)}\begin{pmatrix}2\lambda\sqrt{H_s}\\0\end{pmatrix}\right\|_{L^\infty(0,X)}
\leq \frac{K}{|\lambda|-K}\frac{|\lambda|\sqrt{H_s}}{1-K\,|\lambda|^{-1}}
\frac{F(F-2)}{2(F+1)}\,.
$$

By undoing change of variables $(h,q)=(w_2,cw_2-w_1$, $(w_1,w_2)^T=W=P_0P_1W_1$, we have proved the following proposition.

\begin{proposition}\label{hflem} Let $\theta\in [0,(F-2)/2\sqrt{H_s})$. The solution $(h,q)$ to \eqref{linearizedeq}-\eqref{eigen-normalization} satisfies
\be\label{hfapprox}
e^{-\int_{x_s}^x(\lambda\,\tilde\mu_+(y)+\tilde \gamma_+(y))d y}\begin{pmatrix}h\\q\end{pmatrix}(x)=
\lambda\sqrt{H_s}\,\frac{F(F-2)}{2(F+1)}\begin{pmatrix}1\\\frac{Q(x)}{H(x)}-\sqrt{p'(H(x))}\end{pmatrix}+O(1)
\ee
as $|\lambda|\to\infty$ with $\Re(\lambda)\geq-\theta$, where 
$$\tilde\mu_+=\left(\sqrt{P'(H)}+\frac{q_0}{H}\right)^{-1}$$
and $\tilde \gamma_+$ is defined through \eqref{e:B1}-\eqref{e:P0}-\eqref{e:C0}-\eqref{e:tilde}.
\end{proposition}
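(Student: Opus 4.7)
The plan is to follow the staircase of block-diagonalizing substitutions $W=(ch-q,h)^T$, $W_0=P_0^{-1}W$, $W_1=P_1^{-1}W_0$ introduced just above the statement. The first two substitutions put the interior ODE into the near-diagonal form $(p'(H)-q_0^2/H^2)\,W_0'=(\lambda\Gamma+C_0)\,W_0$ with $\Gamma=\diag(\mu_+,\mu_-)$, and $P_1=I_2+\lambda^{-1}\tilde P_1$ is chosen precisely so that $[\lambda\Gamma,P_1]$ cancels the off-diagonal part of $C_0$. After conjugation the system becomes fully diagonal at the $O(\lambda)$ and $O(1)$ levels, the only remaining coupling being the $C_1/\lambda$ term, which moreover vanishes at the sonic point $x_s$. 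From this diagonal form, a Duhamel representation around the $\mu_+$ mode dictates the ansatz
$$\widetilde W_1(x)=e^{-\int_{x_s}^x(\lambda\tilde\mu_+(y)+\tilde\gamma_+(y))\,dy}\,W_1(x),$$
whose first component should be $O(|\lambda|)$ and whose second should be $O(1)$.

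The core of the proof is then the contraction-mapping argument $\widetilde W_1=\Phi(\widetilde W_1)$ displayed before the statement. The normalization \eqref{eigen-normalization} translates (via $P_0^{-1}$ at $x_s$ together with $P_1(x_s)=I_2$) into $W_1(x_s)=\tfrac{F(F-2)}{4(F+1)}(2\lambda\sqrt{H_s},0)^T$, which is exactly $\Phi(0)$. The crucial uniform-in-$\lambda$ estimate is that $\Phi$ is Lipschitz with constant $K/|\lambda|$ on $C^0([0,X];\CC^2)$. This requires both exponential kernels appearing in $\Phi$,
$$e^{-\int_y^x(\lambda\tilde\mu_+ +\tilde\gamma_+)(z)\,dz}\quad\text{and}\quad e^{-\lambda\int_y^x\tfrac{\mu_+-\mu_-}{p'(H)-q_0^2/H^2}(z)\,dz-\int_y^x(\tilde\gamma_+-\tilde\gamma_-)(z)\,dz},$$
to remain bounded on all relevant pairs $(x,y)\in[0,X]^2$ for $\Re(\lambda)\geq-\theta$. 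This boundedness is secured by the sign match between $(x-x_s)$ and both $\tilde\mu_+(x)$ and $(\mu_+-\mu_-)/(p'(H)-q_0^2/H^2)(x)$, combined with the standing hypothesis $\theta<(F-2)/(2\sqrt{H_s})$, which is exactly what prevents $\Re(\lambda\tilde\mu_+)$ from producing an exponentially large factor near $x_s$. Once contraction is established, the fixed point $\widetilde W_1$ exists uniquely and agrees with $\Phi(0)$ up to an error $O(1)$.

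Undoing the transformations by $W=P_0P_1W_1$ and $(h,q)=(w_2,cw_2-w_1)$ then yields \eqref{hfapprox}. Because $P_1=I_2+O(1/\lambda)$ and the leading part of $\widetilde W_1$ lives entirely in its first component, only the first column of $P_0$, namely $(q_0/H+\sqrt{p'(H)},1)^T$, contributes at order $\lambda$; using the profile relation $Q=cH-q_0$, this column converts into the stated $(1,Q/H-\sqrt{p'(H)})^T$ under the inversion $(h,q)=(w_2,cw_2-w_1)$.

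The hardest conceptual point is the handling of the sonic point: the factorizations \eqref{e:tilde} must genuinely be smooth through $x_s$ (a consequence of $\mu_+(x_s)=0$ and the explicit structure of $C_0$ computed in the preamble), and the normalization \eqref{eigen-normalization} must be compatible with the regular-singular structure at $x_s$ so that a unique analytic solution exists and verifies the fixed-point equation across the singularity on all of $[0,X]$ rather than on only one side of $x_s$. This analyticity is exactly what is established in Appendix~\ref{s:local}, so the estimates here then combine with it to deliver the stated uniform asymptotics.
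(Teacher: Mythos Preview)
Your proposal follows exactly the paper's argument: the same chain of substitutions $W\to W_0\to W_1\to\widetilde W_1$, the same contraction map $\Phi$ with Lipschitz constant $K/|\lambda|$ coming from the sign alignment of $\tilde\mu_+$ and $(\mu_+-\mu_-)/(p'(H)-q_0^2/H^2)$ with $(x-x_s)$, and the same final inversion to recover $(h,q)$. There is one small slip: $P_1(x_s)\neq I_2$, since $(C_0)_{2,1}(x_s)=\tfrac{4}{3}\tfrac{F+1}{F}\neq 0$ gives $\tilde P_1(x_s)$ a nonzero $(2,1)$ entry; the correct computation is $W_1(x_s)=P_1(x_s)^{-1}W_0(x_s)$ with $P_1(x_s)^{-1}=\begin{pmatrix}1&0\\-\tfrac{1}{\lambda}\tfrac{2}{3}\tfrac{F+1}{\sqrt{H_s}}&1\end{pmatrix}$, and it is exactly this extra off-diagonal term that kills the second component of $W_0(x_s)$ to produce $W_1(x_s)=\tfrac{F(F-2)}{4(F+1)}(2\lambda\sqrt{H_s},0)^T$. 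Your stated value of $W_1(x_s)$ is nonetheless correct, so the rest of the argument goes through unchanged.
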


\subsection{Co-periodic and subharmonic real-frequency instability indices}\label{s:mfi}

Sending $\Re(\lambda)\to\infty$ in \eqref{lopatinskii}, we recover through the above proposition the results in \cite{N2} that potential unstable frequencies have bounded real parts. This is essentially necessary to local well-posedness, or, in other words, Hadamard stability, as proved in \cite{N3}.
We capture this in the following result.

\begin{proposition}\label{lopprop}
Uniformly in $\xi\in[-\pi/X,\pi/X]$, the Evans-Lopatinsky determinant defined in \eqref{lopatinskii} satisifes
\be\label{lopdef}
\Delta(\lambda,\xi)\sim
\gamma_0\lambda^2\,e^{\int_{x_s}^X(\lambda\,\tilde\mu_+(y)+\tilde \gamma_+(y))d y}\,
\ee
as $\Re(\lambda)\to\infty$, where the constant $\gamma_0$ is defined explicitly as
$$
\gamma_0:=\sqrt{H_s}\,\frac{F(F-2)}{2(F+1)}
\{H\}\,\left(\frac{\sqrt{H_+}}{F}+\frac{q_0}{H_+}\right)^2. 
$$
In particular there is an upper 
bound on real parts of unstable frequencies.
\end{proposition}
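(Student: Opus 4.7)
The plan is to feed the high-frequency approximation of Proposition~\ref{hflem} into the determinant \eqref{lopatinskii} and isolate the dominant balance as $\Re(\lambda)\to+\infty$. First I would note that $\tilde\mu_+=(\sqrt{p'(H)}+q_0/H)^{-1}$ is strictly positive on $[0,X]$, so since $x_s\in(0,X)$ the weight $e^{\int_{x_s}^x(\lambda\tilde\mu_++\tilde\gamma_+)dy}$ is exponentially large at $x=X^-$ and exponentially small at $x=0^+$ once $\Re(\lambda)$ is large. Combined with \eqref{hfapprox} this shows, uniformly in $\xi\in[-\pi/X,\pi/X]$,
\[
\{h\}_\xi=h(X^-)-e^{i\xi X}h(0^+)\sim h(X^-),\qquad \{q\}_\xi\sim q(X^-),
\]
since the phase $e^{i\xi X}$ is bounded while $(h,q)(0^+)$ carries a vanishingly small weight.

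Next I would exploit the direction prescribed by \eqref{hfapprox}. With $v=Q/H$ and $a=\sqrt{p'(H)}=\sqrt{H}/F$, the leading part of $(h,q)(X^-)$ is aligned with $(1,v(X^-)-a(X^-))^T=(1,c+\mu_-(H_+))^T$ and carries the scalar factor $\lambda\sqrt{H_s}\tfrac{F(F-2)}{2(F+1)}e^{\int_{x_s}^X(\lambda\tilde\mu_++\tilde\gamma_+)dy}$. Using the profile identity $\{Q\}=c\{H\}$ (from $Q=cH-q_0$), the first column of the matrix in \eqref{lopatinskii} equals $\lambda\{H\}(1,c)^T$ modulo $O(1)$. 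A short calculation based on
\[
-v^2+a^2+(2v-c)(v-a)=(v-a)(v-a-c)=(c+\mu_-)\mu_-
\]
shows that evaluating the second column against $(1,c+\mu_-(H_+))^T$ yields $\mu_-(H_+)(1,c+\mu_-(H_+))^T\,h(X^-)$ at leading order. The $2\times2$ determinant then collapses via
\[
\det\begin{pmatrix}1 & 1\\ c & c+\mu_-(H_+)\end{pmatrix}=\mu_-(H_+),
\]
producing a clean factor $\mu_-(H_+)^2$. Substituting the explicit prefactor of $h(X^-)$ recovers the claimed asymptotic with $\gamma_0=\sqrt{H_s}\tfrac{F(F-2)}{2(F+1)}\{H\}\,\mu_-(H_+)^2$, which matches the stated expression since $\mu_-(H_+)=-(q_0/H_++\sqrt{H_+}/F)$.

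The Hadamard-type upper bound then falls out immediately: $\gamma_0\neq 0$ because $F>2$ throughout the existence range, $\{H\}=H_+-H_-\neq 0$ on any nontrivial wave, and $\mu_-(H_+)\neq 0$; meanwhile positivity of $\tilde\mu_+$ on $[x_s,X]$ forces $|\lambda^2 e^{\lambda\int_{x_s}^X\tilde\mu_+ dy}|\to\infty$ as $\Re(\lambda)\to+\infty$, so $|\Delta(\lambda,\xi)|\to\infty$ uniformly in $\xi$ and $\Delta(\cdot,\xi)$ cannot vanish for $\Re(\lambda)$ sufficiently large. The main piece of bookkeeping I expect to be careful about is verifying that the two subleading contributions I am discarding --- the $O(1)$ correction to the first column coupled to the leading second column, and the $O(e^{\int})$ remainder of $(h,q)(X^-)$ coupled to the leading first column --- both yield $O(\lambda\,e^{\int})$, hence remain genuinely dominated by the $O(\lambda^2\,e^{\int})$ principal term. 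Once that routine check is discharged, no further obstacle remains.
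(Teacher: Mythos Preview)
Your proposal is correct and follows essentially the same approach as the paper's own proof: both invoke Proposition~\ref{hflem} to see that the second column of \eqref{lopatinskii} is dominated by its value at $x=X^-$ while the first column is dominated by $\lambda(\{H\},\{Q\})^T=\lambda\{H\}(1,c)^T$, then reduce to a $2\times 2$ determinant and simplify. Your organization of the algebra via the identity $-v^2+a^2+(2v-c)(v-a)=(v-a)(v-a-c)$ and the factorization through $\mu_-(H_+)$ is slightly cleaner than the paper's direct expansion of $Q(X)=cH_+-q_0$, and your explicit check that the cross terms are $O(\lambda\,e^{\int})$ rather than $O(\lambda^2\,e^{\int})$ fills in bookkeeping the paper leaves implicit, but the underlying argument is the same.
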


\begin{proof}
For $\Re(\lambda)\to \infty$, the second column in \eqref{lopatinskii} is dominated by
the exponentially large value at $x=X$, whereas, the first column of the Evans-Lopatinsky determinant is dominated by $(\lambda\{H\},\lambda\{Q\})^T$. Combining these observations, we obtain \eqref{lopdef} with
$$
\gamma_0=\det\begin{pmatrix}
\{H\}&-c+\frac{Q(X)}{H_+}-\frac{\sqrt{H_+}}{F}\\
c\{H\}&\frac{H_+}{F^2}-\frac{Q(X)^2}{H_+^2}+\left(\frac{2Q(X)}{H_+}-c\right)\left(\frac{Q(X)}{H_+}-\frac{\sqrt{H_+}}{F}\right)\end{pmatrix}
$$
which is seen to coincide with the above value by direct expansion of $Q(X)=cH_+-q_0$.
\end{proof}

The above proposition may be used to derive co-periodic and subharmonic real-frequency instability indices.  Indeed, note that from the uniqueness of solutions$(h,q)$ of 
\eqref{linearizedeq}-\eqref{eigen-normalization} it follows that 
$(h,q)$ is real whenever $\lambda$ is real and hence we have that $\Delta(\lambda,0)$ and $\Delta(\lambda,\pi/X)$ are real for $\lambda\in\RM$.  
By Proposition~\ref{lopprop} we deduce that both $\Delta(\lambda,0)$ and $\Delta(\lambda,\pi/X)$ are positive when $\lambda\in\RM$ is positive and sufficiently large. 
Using the already established expansions
\begin{align*}
\Delta(\lambda,0)&=\,\lambda^2\d_\lambda\hat\Delta(0,0)\,+O(|\lambda|^3)\\
\Delta\left(\lambda,\frac{\pi}{X}\right)&=\,-2\lambda\frac{\d_\xi\hat\Delta(0,0)}{i\,X}\,+O(|\lambda|^2),
\end{align*}
valid as $\lambda\to0$, we deduce by the intermediate value theorem that the sign of $\d_\lambda\hat\Delta(0,0)$ determines modulo two the number of co-periodic - that is, associated with $\xi=0$ - real unstable frequencies, and that the sign of $i\d_\xi\hat\Delta(0,0)$ determines modulo two the number of subharmonic - that is, associated with $\xi=\pi/X$ - real unstable frequencies. 
Incidentally, note that if $\d_\lambda\hat\Delta(0,0)$ is positive (as suggested by our numerical observations) then by \eqref{second} the
quantity $\Delta(0,\pi/X)$ has the opposite sign of $\alpha$.  Moreover, recall that both $\d_\lambda\hat\Delta(0,0)$ and $\d_\xi\hat\Delta(0,0)$ are directly related to the Whitham modulation system and are hence explicitly computable: see Section \ref{s:comparison} above.

Taken all together, the above considerations prove Theorem~\ref{lfthm}.

\subsection{High-frequency stability index} \label{s:hfi}
Using the result of Proposition \ref{hflem} without assuming that $\Re(\lambda)$ is large, we now study high-frequency stability.  
In this case, we cannot use exponential growths to discard parts of the expansions as in Proposition \ref{lopprop} and, consequently, the expansion \eqref{lopdef} is modified into
\begin{align}
\Delta(\lambda,\xi)
&=
\lambda^2\,e^{-\lambda\int_0^{x_s}\tilde\mu_++\int_{x_-}^X\tilde \gamma_++i\xi X}
\gamma_0\\
&\qquad\times\left(e^{\lambda\int_0^{X}\tilde\mu_+-i\xi X}\left(1+O\left(|\lambda|^{-1}\right)\right)
-\left(\frac{\frac{\sqrt{H_-}}{F}+\frac{q_0}{H_-}}{\frac{\sqrt{H_+}}{F}+\frac{q_0}{H_+}}\right)^2
e^{-\int_0^X\tilde \gamma_+}+O\left(|\lambda|^{-1}\right)\right)
\nonumber
\end{align}
uniformly in $\xi\in[-\pi/X,\pi/X]$ when $|\lambda|\to\infty$ with $\Re(\lambda)\geq-\theta$, $\theta$ being held fixed in $[0,(F-2)/2\sqrt{H_s})$. Defining the high-frequency index
\begin{equation}
\mathbf{I}:=\left(\frac{\frac{\sqrt{H_-}}{F}+\frac{q_0}{H_-}}{\frac{\sqrt{H_+}}{F}+\frac{q_0}{H_+}}\right)^2
e^{-\int_0^X\tilde \gamma_+}
\end{equation}
where $\tilde \gamma_+$ is derived from \eqref{e:B1}-\eqref{e:P0}-\eqref{e:C0}-\eqref{e:tilde}, we obtain the following characterization of high-frequency spectrum.

\begin{proposition}\label{hfprop}
If $\mathbf{I}\leq e^{-\frac{F-2}{2\sqrt{H_s}}\int_0^{X}\tilde\mu_+}$, then for any $\theta\in[0,(F-2)/2\sqrt{H_s})$ there exists a $R_0>0$ such that the system
\eqref{linearizedeq}-\eqref{eigen-normalization} has no spectrum in the set
$$\{\,\lambda\,:\,\Re(\lambda)\geq-\theta\ \textrm{and}\ |\lambda|\geq R_0\}\,.$$
Conversely, if $\mathbf{I}> e^{-\frac{F-2}{2\sqrt{H_s}}\int_0^{X}\tilde\mu_+}$ then for any 
$$\theta\in\left(\frac{\ln(\mathbf{I})}{\int_0^{X}\tilde\mu_+},\frac{F-2}{2\sqrt{H_s}}\right)$$ there exists a $R_0>0$ such that the spectrum in
$$\{\,\lambda\,:\,\Re(\lambda)\geq-\theta\ \textrm{and}\ |\lambda|\geq R_0\}$$
consists exactly of two curves, made of simple Floquet eigenvalues and locally parametrized by $\xi$, asymptoting to the vertical line with equation 
$$
\Re(\lambda)\,=\,\frac{\ln(\mathbf{I})}{\int_0^{X}\tilde\mu_+}\,.
$$
\end{proposition}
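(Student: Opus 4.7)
The plan is to extract the dominant balance from the high-frequency expansion displayed just before the proposition and to apply a Rouch\'e-type comparison. Since the prefactor $\gamma_0\lambda^2 e^{-\lambda\int_0^{x_s}\tilde\mu_+ + \int_{x_-}^X\tilde\gamma_+ + i\xi X}$ never vanishes, on $\{\Re(\lambda)\geq-\theta,\ |\lambda|\geq R_0\}$ the zeros of $\Delta(\cdot,\xi)$ coincide with those of
\begin{equation*}
G(\lambda,\xi)\;:=\;e^{\lambda T - i\xi X}\bigl(1+\varepsilon_1(\lambda,\xi)\bigr)\;-\;\mathbf{I}\;+\;\varepsilon_2(\lambda,\xi),
\end{equation*}
where $T:=\int_0^X\tilde\mu_+>0$ and, by Proposition~\ref{hflem}, $\varepsilon_j(\lambda,\xi)=O(|\lambda|^{-1})$ uniformly in $\xi\in[-\pi/X,\pi/X]$ and $\Re(\lambda)\geq-\theta$, for any fixed $\theta\in[0,(F-2)/(2\sqrt{H_s}))$. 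The unperturbed equation $e^{\lambda T - i\xi X}=\mathbf{I}$ has solutions $\lambda_k^{(0)}(\xi):=\frac{\ln\mathbf{I}}{T}+i\frac{\xi X+2\pi k}{T}$, $k\in\ZM$, all lying on the vertical line $\Re(\lambda)=\ln(\mathbf{I})/T$.

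In the first regime, $\mathbf{I}\leq e^{-(F-2)T/(2\sqrt{H_s})}$ places this line strictly to the left of $\{\Re(\lambda)\geq-\theta\}$ for any admissible $\theta$, with a gap uniform in $\xi$. One then has $|e^{\lambda T-i\xi X}|\geq e^{-\theta T}>(1+\eta)\mathbf{I}$ on that region for some $\eta>0$ independent of $\xi$, and enlarging $R_0$ makes $|\varepsilon_j|$ small enough to conclude $|G(\lambda,\xi)|\geq c>0$, proving the first claim. In the converse regime, for $\theta$ chosen in a suitable range the asymptotic line $\Re(\lambda)=\ln(\mathbf{I})/T$ lies strictly to the right of $-\theta$. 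Around each $\lambda_k^{(0)}(\xi)$ place a closed disk $D_k(\xi)$ of small fixed radius $\rho$; the Taylor expansion $e^{\lambda T-i\xi X}-\mathbf{I}=T\mathbf{I}\bigl(\lambda-\lambda_k^{(0)}(\xi)\bigr)+O(\rho^2)$ yields $|e^{\lambda T-i\xi X}-\mathbf{I}|\geq c_0\rho$ on $\partial D_k(\xi)$ uniformly in $k,\xi$. Fixing $R_0$ so that $|\varepsilon_j|<c_0\rho/2$ on $|\lambda|\geq R_0$, Rouch\'e's theorem yields exactly one zero $\lambda_k(\xi)$ of $G(\cdot,\xi)$ in each such $D_k(\xi)$, which is simple since $\partial_\lambda G=T\mathbf{I}+O(|\lambda|^{-1})\neq 0$ there. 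The implicit function theorem then promotes this into a smooth $\xi$-dependence with $\lambda_k(\xi)=\lambda_k^{(0)}(\xi)+O(|k|^{-1})$, so the resulting family of curves asymptotes to the vertical line $\Re(\lambda)=\ln(\mathbf{I})/T$.

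It remains to rule out additional zeros. Outside the disks $D_k(\xi)$ but inside a strip $-\theta\leq\Re(\lambda)\leq\sigma_\star$ (for any fixed $\sigma_\star$), the $(2\pi/T)$-periodicity in $\Im(\lambda)$ of $\lambda\mapsto e^{\lambda T}$ combined with the disk radius choice provides a uniform lower bound on $|e^{\lambda T-i\xi X}-\mathbf{I}|$; while for $\Re(\lambda)\geq\sigma_\star$ sufficiently large, the first term of $G$ dominates $\mathbf{I}$. In both cases the $\varepsilon_j$ are negligible once $|\lambda|\geq R_0$, which closes the argument. The main technical obstacle is securing uniformity of the Rouch\'e and implicit function estimates simultaneously in $k\in\ZM$ and $\xi\in[-\pi/X,\pi/X]$; this is exactly the content of the uniform high-frequency approximation furnished by Proposition~\ref{hflem}.
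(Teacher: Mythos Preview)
Your argument is correct and follows exactly the route the paper takes: factor off the nonvanishing prefactor and apply Rouch\'e's theorem to compare the zeros of $\lambda^{-2}e^{\lambda\int_0^{x_s}\tilde\mu_+-\int_{x_-}^X\tilde\gamma_+-i\xi X}\gamma_0^{-1}\Delta(\lambda,\xi)$ with those of $e^{\lambda T-i\xi X}-\mathbf{I}$, uniformly in $\xi$. The paper states this in a single sentence; you have supplied the quantitative details (lower bounds on $|e^{\lambda T-i\xi X}-\mathbf{I}|$ away from the lattice, the disk-by-disk Rouch\'e count, and the implicit-function step for simplicity and smooth $\xi$-dependence) that the paper leaves implicit. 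One small point worth making explicit to match the ``two curves'' wording: since $\Delta(\cdot,\pi/X)=\Delta(\cdot,-\pi/X)$, your arcs satisfy $\lambda_k(\pi/X)=\lambda_{k+1}(-\pi/X)$ and therefore concatenate into two connected curves, one in each half of the imaginary axis.
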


\begin{proof}
This follows directly through Rouch\'e's theorem from a comparison of zeros $\lambda$ of
$$
\lambda^{-2}\,e^{\lambda\int_0^{x_s}\tilde\mu_+-\int_{x_-}^X\tilde \gamma_+-i\xi X}
\gamma_0^{-1}\,\Delta(\lambda,\xi)
$$
with those of
$$
e^{\lambda\int_0^{X}\tilde\mu_+-i\xi X}-\mathbf{I}\,.
$$
uniformly in $\xi$ in the aforementioned zone of $\lambda$s.
\end{proof}

Observe now that Theorem~\ref{hfthm} is a simple corollary of Proposition~\ref{hfprop}. 

\medskip

In order to compute the high-frequency index , observe that the integral in the definition of the index $\mathbf{I}$ may be written as an integral between $H_-$ and $H_+$ of a rational function of $h$ whose denominator is easily factored so that an exact formula for $\mathbf{I}$ may be algebraically determined. Yet, we omit it here since the resulting formula is both very long and not very instructive.

Instead, we fix $H_s=1$ and evaluate numerically the high-frequency stability index $\mathbf{I}=\mathbf{I}(F,\underline{H}_-)$.
The outcome of this numerical computation is the observation that, across the entire parameter-range of existence, one has $\mathbf{I}(F,\underline{H}_-)<1$, indicating high-frequency 
stability of all periodic traveling wave solutions of \eqref{sv_intro}.
See Figure~\ref{highindex} for a depiction of the
typical behavior of the high-frequency stability index as parameters $\underline{H}_-$ and $F$ are varied.

\begin{figure}[htbp]
\begin{center}
\includegraphics[scale=.6]{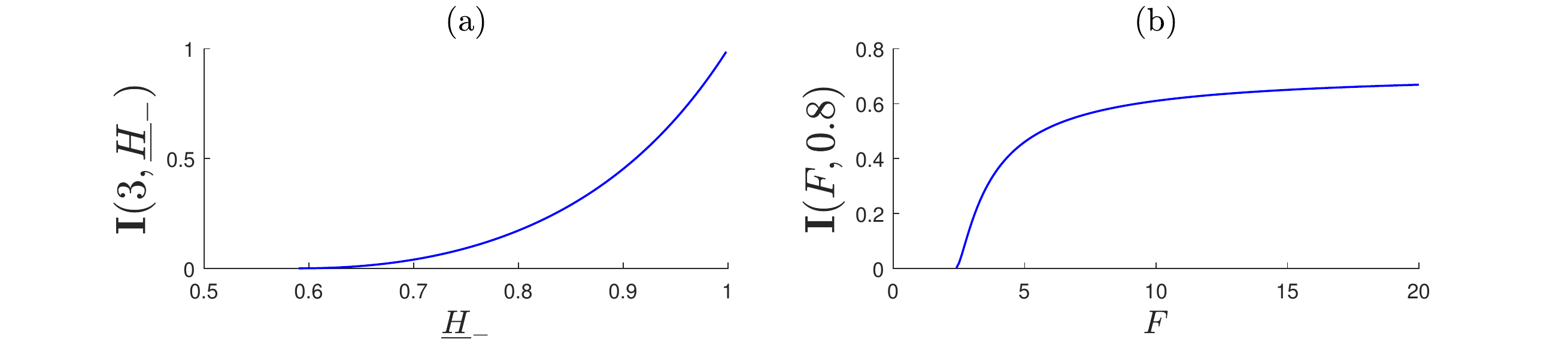}
\end{center}
\caption{Numerical evaluation of $\mathbf{I}(F,\underline{H}_-)$ for (a) $F=3$, $\underline{H}_-\in(H_{hom}(3),1]$ and for (b) $\underline{H}_-=0.8$, $F\in[2.4,20]$.}
\label{highindex}
\end{figure}

\section{Numerical investigations and stability diagram}\label{s:numerics}
In this section, we describe how we have carried out our numerical investigations, providing complete stability diagrams expounded in the introduction in Figure~\ref{fig_rosetta} and  Figure~\ref{fig_compare}. While some details on the implementation are provided in Appendix~\ref{s:computations}, our focus here is on methodology.

The general framework introduced in Section~\ref{s:evans-lop} and the exact factorization from Section~\ref{s:Lowboundary} lead to the problem of finding roots of $\hat{\Delta}(\cdot,\xi)$ 
for any $\xi\in[-\pi/X,\pi/X]$, where $\hat{\Delta}$ is as defined in \eqref{e:factored}.

Let us first explain how to compute $(h,q)$ (or its reduced version $(\tilde h,\tilde q)$) from equations \eqref{linearizedeq}-\eqref{eigen-normalization} and, in particular, 
how to deal with the presence of the singular point at $x=x_s$.  At the numerical level, we follow the pattern of the existence proof provided in Section~\ref{s:general}, that is, in a small neighborhood of the singular point we carry out a specific, asymptotic treatment of the problem and then solve outward from the boundary of the neighboring zone with a standard ODE solver. Near the sonic point, we use the analyticity of the sought solution to approximate it by a finite series whose coefficients are determined from the finite-order recursion relation imposed by the equation. At this stage, since coefficients of the eigen equation and $H'$ itself are expressed in terms of $H$, it is convenient to use a truncated version of an expansion in terms of $H(x)-H_s$ instead of $x-x_s$, that is, to approximate $(h,q)(x)$ with
a finite sum of the form
$$
\sum_{n=0}^{N-1} (H(x)-H_s)^n\,(a_n,b_n)
$$
where $(a_n,b_n)$ are determined recursively. Moreover the convergence of the corresponding series is readily deduced by applying the general theory in Section~\eqref{s:general} to the equation obtained from \eqref{linearizedeq} after the change of variable $x\mapsto H(x)$. Recurrence relations are obtained and solved by using a numerical symbolic solver. To use a relatively small number $N$ of terms in the expansion, the size of the neighborhood of the sonic point on which the expansion is used is chosen adaptively, so as to ensure reasonable accuracy of the power series expansion. This scheme turns out to be very robust.

Once the above approximations of $(h,q)$ has been achieved, our computations follow the by now classical road map to investigate stability of periodic waves from Evans function computations. See in particular \cite{BJNRZ1} for comparison and for some implementation details omitted here.

Explicitly, to determine stability, we fix $0<r<R$ with $r\ll1$ and $R\gg1$ and examine the presence of spectrum associated to \eqref{linearizedeq}-\eqref{eigen-normalization} within the set
$$
\Omega(r,R):=\{\,\lambda\,:\, \Re(\lambda)>0,r<|\lambda|<R\,\}\,.
$$
To this end, we compute for any $\xi\in[-\pi/X,\pi/X]$ a winding number for the associated reduced periodic Evans-Lopatinsky function $\hat{\Delta}(\cdot,\xi)$, i.e. we numerically compute the 
complex contour integral
$$
n(\xi;\Omega):=\frac{1}{2\pi i}\oint_{\d\Omega}\frac{\d_{\lambda}\hat\Delta(\lambda,\xi)}{\hat\Delta(\lambda,\xi)}~d\lambda
=\frac{1}{2\pi i}\oint_{\d\Omega}\d_\lambda\arg\left(\hat\Delta(\lambda,\xi)\right)~d\lambda\,.
$$
Since $\hat\Delta(\cdot,\xi)$ is complex analytic in $\lambda$, it is clear that this counts its number of zeros of $\hat{\Delta}(\cdot,\xi)$ within the set $\Omega$.  
To capture the structure of the full spectrum, this winding number must be computed for a suitably fine discretization of $\xi\in[-\pi/X,\pi/X]$.
Observe that a significant amount of redundant calculation is spared by noting that\footnote{We keep the $x$-notation for clarity but recall that actually we perform numerical computations after the change of variable $x\mapsto H(x)$ has been performed.} 
\begin{align*}
\hat{\Delta}(\lambda,\xi)
&=\det \begin{pmatrix}
\{ H\} &\ds-\lambda\int_{x_-}^{x_-+X}\hspace{-0.5em}\tilde h\,-\,(c \tilde h- \tilde q)(x_-)\\[1em]
\{\lambda Q-(H-\frac{Q^2}{H^2})\}
&\lambda\{(-\frac{Q^2}{H^2}+\frac{H}{F^2})\tilde h+(\frac{2Q}{H}-c) \tilde q\}
\,+\,\lambda\{Q\}\\
&\,+\,(H-\frac{Q^2}{H^2})(x_-)
\end{pmatrix}\\
&+e^{i\xi X}
\det \begin{pmatrix}
\{ H\} &\ds (c \tilde h- \tilde q)(x_-)\\[1em]
\{\lambda Q-(H-\frac{Q^2}{H^2})\}
&-\lambda\left(-\frac{Q^2}{H^2}+\frac{H}{F^2})\tilde h+(\frac{2Q}{H}-c) \tilde q\right)(x_-)\\
&\,-\,(H-\frac{Q^2}{H^2})(x_-)
\end{pmatrix}
\end{align*}
and that the computation of the two determinants above is independent of $\xi$.  For illustration, typical graphs of how the contour $\d\Omega(r,R)$ maps under $\hat\Delta(\cdot,\xi)$ 
are given in Figure~\ref{winding}.  We have used the above method to verify 
medium-frequency stability, 
for $2.3\le F\le 19$, discretizing uniformly into $6000$ points the area of parameters corresponding to waves having passed our low-frequency stability tests, choosing $r=0.01$, $R=400$ and $\xi$ in a $1000$-point uniform mesh of the interval $[-\pi/X,\pi/X]$. From these calculations was determined an additional 
``medium-frequency stability boundary" 
which crosses the low-frequency stability boundary I at about $F=16.3$; see the illustration in Figure \ref{fig_scheme}.

\begin{figure}[htbp]
\begin{center}
\includegraphics[scale=.5]{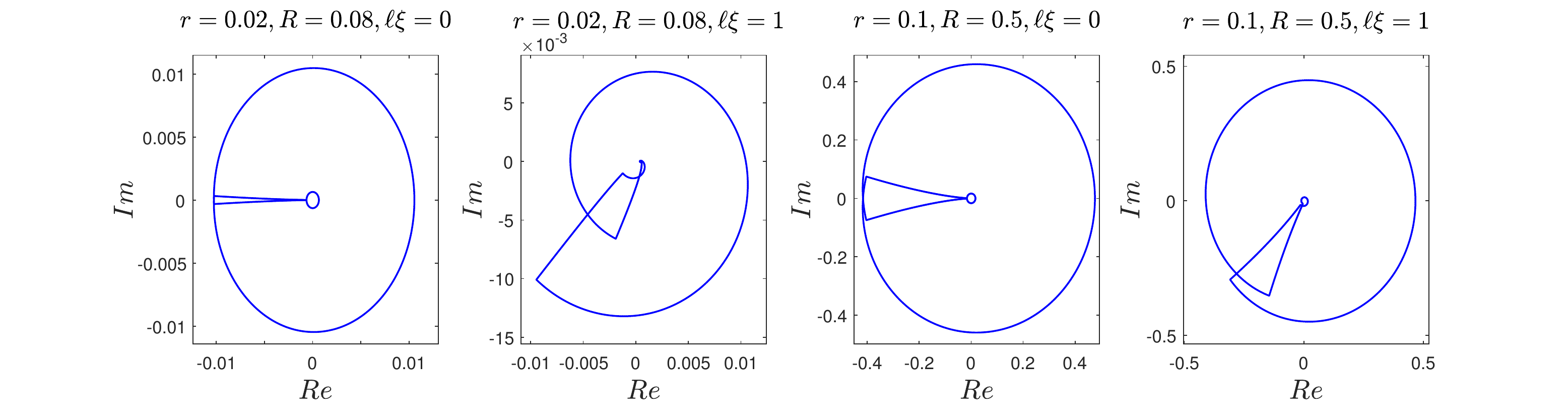}
\end{center}
\caption{Images of the contour $\d\Omega(r,R)$ under the function $\hat\Delta(\cdot,\xi)$ with $F=8$, $\underline{H}_-=0.22$ (and $H_s=1$, hence $\ell=X$). 
The calculated winding numbers are $0,1,0,0$, respectively.}
\label{winding}
\end{figure}

Both to improve accuracy by bisecting on $\underline H_-$ and to shed some light on the transition to instability, we have refined numerically all of the aforementioned stability boundaries. 
Concerning the low-frequency stability boundaries, we have tracked as a function of the Bloch frequency $\xi$ the small root of $\hat\Delta(\cdot,\xi)$ by running a classical root solver. Consistently with our theoretical analysis, we have observed that:
\begin{enumerate}
\item[i.] As $\underline{H}_-$ decreases and crosses the low-frequency stability boundary I, the spectra first appears to be a ``circle" on the right half plane (indicating instability),
gradually shrink to the origin (corresponding to $\alpha=0$), and finally grow to be a circle on the left half plane (indicating low-frequency stability). See Figure~\ref{spectrum}.(a).
\item[ii.] As $\underline{H}_-$ decreases and crosses the low-frequency stability boundary II, the spectra  near the origin initially curves into the left half plane (corresponding
to $\beta>0$), gradually becomes more and more vertical, and finally curves to lie on the right half plane (corresponding to $\beta<0$).  See Figure~\ref{spectrum}.(b).
\end{enumerate}
\begin{figure}[htbp]
\begin{center}
\includegraphics[scale=0.6]{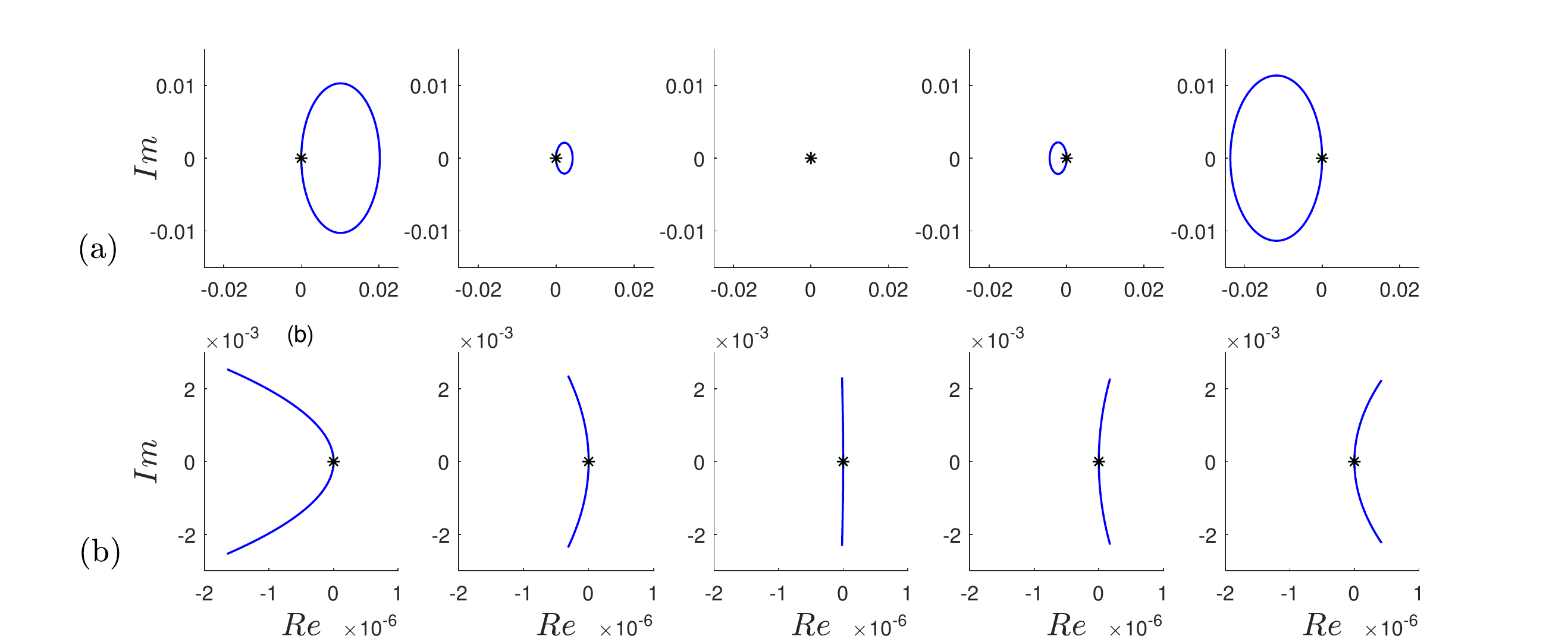}
\end{center}
\caption{(a) Plots of the spectrum near the origin with $H_s=1$ when $\underline{H}_-$ crosses the low-frequency stability boundary I at  $F=2.492779325091594,\;\underline{H}_-=0.806451612903226$; (b) Plots of the spectrum near the origin with $H_s=1$ when $\underline{H}_-$ crosses the low-frequency stability boundary II at $F=2.5,\;\underline{H}_-=0.745329985201548$.}
\label{spectrum}
\end{figure}

Whereas the above is merely a consistency check for the low-frequency stability boundaries, the computation of the 
medium-frequency
transition is more instructive. To detect and trace it we have again used a root solver, this time near roots that barely crossed the imaginary axis. The behavior of these critical spectral curves seems to depend on $F$.  Indeed, for $2<F\le 12.2$, the curves connect back to the origin as $\xi$ is varied, while for $12.3\le F\le 16.3$ the curves
cross real axis at about $-1.5$: see Figure~\ref{medfrespectrum} (a) and (b).  Furthermore, for $F\approx 2.74$ there appear to be multiple (local) curves that touch the imaginary axis away from the origin,
making the tracking of this boundary potentially difficult.
For example, as $F$ decreases through $2.74$ the location of the purely imaginary points away from the origin in the spectrum changes from
a pair farther away from the origin to a pair nearer the origin; see Figure~\ref{medfrespectrum}(c)-(e).  It is this switching between critical branches of spectrum that is responsible for the sharp edge of the stability boundary in Figure~\ref{fig_rosetta}(d).

\begin{figure}[htbp]
\begin{center}
\includegraphics[scale=.5]{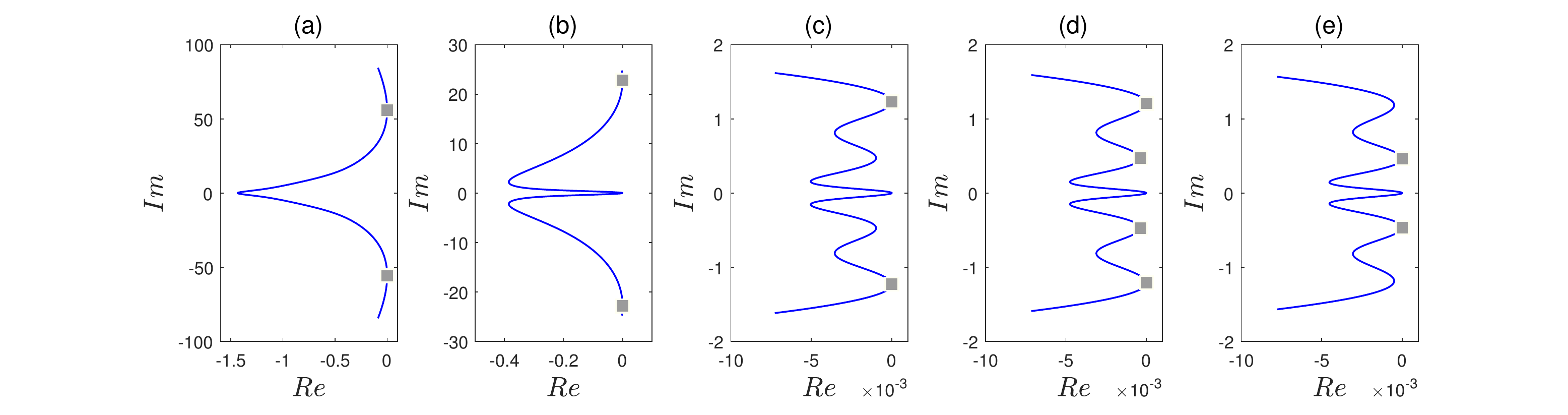}\\
\end{center}
\caption{Spectral plots for points on the 
mid-frequency boundary, with the purely imaginary (non-zero) points of the spectrum indicated with a small black box. Everywhere $H_s=1$. (a) $F=16.3, \underline{H}_-=0.0920122990378320$; (b) $F=8,\underline{H}_-=0.178726395676078$; (c) $F=2.75 \underline{H}_-=0.658783869495317$; (d) $F=2.74, \underline{H}_-=0.661884455727258$; (e) $F=2.73 \underline{H}_-=0.665012235912442$.  Note that panes (c)-(e) show a jump in the imaginary part of the critical purely imaginary points as $F$ decreases through $F\approx 2.74$. }
\label{medfrespectrum}
\end{figure}

\appendix

\section{Local solvability}\label{s:local}
In this appendix we treat solvability in Sobolev spaces or analytic setting of a class of singular ODEs. Our analysis significantly differs from, but obviously also overlaps with,
the classical treatment of regular-singular points of ODEs, as found in \cite{C}. We perform the required estimates first for a ``toy" model equation, and then for the general case at hand.

\subsection{Model problems}\label{s:model}
Starting with the principal singular part, we are led to consider the model constant-coefficient scalar equation
\be\label{modeleq}
xv'+a_0 v= g,
\ee
on a bounded interval $I\subset\RM$ containing $0$, where here $a_0\in\CM$ is a constant. 
Our goal here is to study the continuous solvability of the equation \eqref{modeleq} within different function classes and, in particular, allowing $a_0$ to have arbitrary large real part.

For exposition, we begin with the $L^p$-theory, $1<p\leq\infty$. In this case, for each $g\in L^p(I)$ and $\Re(a_0)>1/p$ there is at most one solution of \eqref{modeleq} 
since $x\mapsto |x|^{-a_0}$ does not belong to $L^p(I)$ on either side of zero.  We show now that under the same condition there is indeed exactly one solution in $L^p(I)$, 
depending continuously on $g\in L^p(I)$.   The claimed solution is given explicitly by
\be\label{special}
v(x)= \int_0^x \left(\frac{|y|}{|x|}\right)^{a_0} g(y)\,\frac{dy}{y}
=\int_0^1 t^{a_0} g(x\,t)\,\frac{dt}{t}\,.
\ee
Note that the condition $\Re(a_0)>1/p$ ensures that the map $y\mapsto |y|^{a_0}/y$ belongs to $L^{p'}(I)$, where $p'$ is Lebesgue-conjugate to $p$, i.e. $1/p+1/p'=1$, since then $p'(\Re(a_0)-1)>-1$. In particular, \eqref{special} makes sense\footnote{We warn the reader, however, that the corresponding pessimistic pointwise bound $|v(x)|\lesssim |x|^{-1/p}$ incorrectly suggests that $v\notin L^p(I)$.} whenever $x\neq0$.  Moreover, since $|x\,v(x)|\lesssim |x|^{1-1/p}\to0$ as $x\mapsto0$, one readily checks that \eqref{special} provides a distributional solution to \eqref{modeleq}. 
The continuity of the solution map follows from the elementary estimate
$$
\|v\|_{L^p(I)}
\leq\int_0^1 t^{\Re(a_0)} \|g(\cdot\,t)\|_{L^p(I)}\,\frac{dt}{t}
\leq \|g\|_{L^p(I)} \int_0^1 t^{\Re(a_0)-1/p}\,\frac{dt}{t}=\frac{1}{\Re(a_0)-1/p}\,\|g\|_{L^p(I)}\,,
$$
which proves the claim.

Extending to $W^{k,p}$-solvability, $k\in\NN^*$, $1<p\leq\infty$, we expect to relax the condition on $a_0$. 
The fastest way to check this claim is to observe that at this level of regularity \eqref{modeleq} may be equivalently written as
$$
x\,(v^{(k)})'+(a_0+k)\,v^{(k)}= g^{(k)}
$$
(to be solved with $v^{(k)}\in L^p(I)$), supplemented by initial data constraints
$$
(a_0+\ell) v^{(\ell)}(0)= g^{(\ell)}(0)\,,\quad0\leq \ell\leq k-1\,.
$$
As a result, when $\Re(a_0)>-k+1/p$ any $W^{k,p}$-solution satisfies
$$
\|v^{(k)}\|_{L^p(I)}
\leq\frac{1}{\Re(a_0)+k-1/p}\,\|g^{(k)}\|_{L^p(I)}\,,
$$
and the set of solutions is a singleton if $a_0\notin\{-(k-1),\cdots,0\}$, and a line parametrized by $v^{(-a_0)}(0)$ otherwise.

Finally, we consider the solvability of \eqref{modeleq} within the class of analytic functions on $I$.  Given an analytic function $g$ on $I$, then as above we find
that if $a_0\notin -\NN$ there is at most one solution of \eqref{modeleq} while if $a_0\in -\NN$ then any solution, if it exists, is determined only up to a multiple of $x\mapsto x^{-a_0}$. 
For simplicity, here we only discuss the case when $\Re(a_0)>-1$. Since $a_0\,v(0)=g(0)$ is a necessary condition for solvability, when $a_0=0$ we assume moreover that $g(0)=0$ and
otherwise choose $v(0)$ such that this constraint is satisfied (uniquely when $a_0\neq0$, arbitrarily otherwise).  In this case, it is easy to check that the $W^{1,\infty}$-formula
$$
v(x)\,=\,v(0)+\int_0^1 t^{a_0} \frac{g(x\,t)-g(0)}{t}\,dt
$$
actually defines an analytic solution on $I$ whenever $g$ is analytic on $I$.  In particular, the above solution formula may be extended to $x$ lying in a complex convex neighborhood $B$ of $0$,
defining a holomorphic extension satisfying the continuity estimate
$$
\max_{z\in B} \frac{|v(z)-v(0)|}{|z|^N} \leq \frac{1}{\Re(a_0)+N}\,\max_{z\in B} \frac{|g(z)-g(0)|}{|z|^N},
$$
where here $N$ is chosen so that $0$ is a root of $g-g(0)$ of order at least $N$. 

\

Having considered the scalar model \eqref{modeleq} above and with actual singular problem deriving from \eqref{sv_intro} in mind, 
we now consider the continuous solvability of a
triangular constant-coefficient system of the form
\be\label{modelsys}
\left\{
\begin{array}{rcl}
v_1'&=&g_1\\
x\,v_2'&=&L_0v_1\,-\,a_0\,v_2\,+\,g_2,
\end{array}\right.
\ee
considered again on a bounded interval $I\subset\RM$ containing $0$, where here 
$L_0\in\mathcal{M}_{1,n-1}(\CM)$ is a matrix, $a_0\in\CM$ is a constant,
$v_1$ is $\RM^{n-1}$-valued and $v_2$ is scalar-valued.  Given $1<p\leq\infty$ and given $g_1,g_2\in L^p(I)$, we find that if
$\Re(a_0)>1/p$ then the set of $L^p$-solutions of \eqref{modelsys} is an $(n-1)$-dimensinoal space, parametrized by $v_1(0)$, with solutions satisfying the continuity estimate
\begin{align*}
\|v_1\|_{L^p(I)}&\leq |v_1(0)|\,|I|^{1/p}+\|g_1\|_{L^p(I)}\,|I|\\
\|v_2\|_{L^p(I)}&\leq \frac{1}{\Re(a_0)-1/p}\,
\left[|L_0|\,|v_1(0)|\,|I|^{1/p}+|L_0|\|g_1\|_{L^p(I)}\,|I|+\|g_2\|_{L^p(I)}\right]\,.
\end{align*}
Likewise when $k\in\NN^*$ and $g_1,g_2\in W^{k,p}(I)$, we find that if 
\begin{equation}\label{sobolevcond}
\Re(a_0)>-k+1/p\,,\qquad 
a_0\notin\{-(k-1),\cdots,-1\}\qquad\text{and}\qquad
(a_0,L_0)\quad\text{is non-zero,} 
\end{equation}
then the set of $W^{k,p}(I)$-solutions is again an $(n-1)$-dimensional space, parametrized by values $(v_1(0),v_2(0))\in\mathbb{R}^n$ satisfying the condition
$$
0=L_0v_1(0)\,-\,a_0\,v_2(0)\,+\,g_2(0)
$$
with solutions satisfying the continuity estimate
\begin{align*}
\|v_1^{(k)}\|_{L^p(I)}&\leq |g_1^{(k-1)}(0)|\,|I|^{1/p}+\|g_1^{(k)}\|_{L^p(I)}\,|I|\\
\|v_2^{(k)}\|_{L^p(I)}&\leq \frac{1}{\Re(a_0)-1/p}\,
\left[|L_0|\,|g_1^{(k-1)}(0)|\,|I|^{1/p}+|L_0|\|g_1^{(k)}\|_{L^p(I)}\,|I|+\|g_2^{(k)}\|_{L^p(I)}\right]\,,
\end{align*}
and data at $x=0$ being determined via the relations
$$
v_1^{(\ell)}(0)=g_1^{(\ell-1)}(0)\,, 
\qquad
v_2^{(\ell)}(0)=\frac{1}{a_0+\ell}\left[L_0g_1^{(\ell-1)}(0)+g_2^{(\ell)}(0)\right],\quad
1\leq \ell\leq k-1\,.
$$

Finally, we consider the solvability of \eqref{modelsys} within the class of analytic functions on $I$ under the assumptions that
$$
\Re(a_0)>-1\,,\qquad\text{and}\qquad
(a_0,L_0)\quad\text{is non-zero.} 
$$
To this end, fix $(v_1(0),v_2(0))$ such that
$$
0=L_0v_1(0)\,-\,a_0\,v_2(0)\,+\,g_2(0)\,.
$$
Then an easy calculation shows there exists a unique analytic solution to \eqref{modelsys} starting from $(v_1(0),v_2(0))$ and its complex extension to a complex convex neighborhood of zero $B$ 
satisfying the continuity estimate
\begin{align*}
\max_{z\in B} \frac{|v_1(z)-v(0)|}{|z|^{N+1}} &\leq \frac{1}{N+1}\,\max_{z\in B} \frac{|g_1(z)|}{|z|^N}\\
\max_{z\in B} \frac{|v_2(z)-v(0)|}{|z|^{N+1}} &\leq \frac{1}{\Re(a_0)+N+1}\,
\left[\frac{|L_0|}{N+1}\,\max_{z\in B} \frac{|g_1(z)|}{|z|^N}+\max_{z\in B} \frac{|g_2(z)-g_2(0)|}{|z|^{N+1}}\right]\,,
\end{align*}
provided $(g_1,g_2)$ are holomorphic on $B$ and $0$ is a root of $g_1$ of order at least $N$ and of $g_2-g(0)$ of order at least $N+1$.

\subsection{The general case}\label{s:general}
We now replace \eqref{modelsys} with an inhomogeneous system in the form
\be\label{gensys}
\bp v_1'\\x\,v_2'\ep=\bp \tilde{A}&C\\ L & -a \ep \bp v_1\\v_2\ep+
\bp g_1\\g_2\ep
\ee
with smooth coefficients and show that the problem keeps the same structure as above, with $a_0:=a(0)$ and $L_0:=L(0)$. 

Concerning Sobolev solving, it stems from the estimates on the model system \eqref{modelsys} through a contraction argument that 
\eqref{gensys} is continuously solvable in $W^{k,p}(I)$ provided \eqref{sobolevcond} holds and 
$I=(-\eps,\eps)$ with $\eps$ sufficiently small (depending only on coefficients). This may then be extended to an arbitrary bounded interval containing $0$ by relying on the classical regular ODE theory to solve outside a small ball about 
the singular point $x=0$.

As for analytic solving when
$$
\Re(a(0))>-1\,,\qquad\text{and}\qquad
(a(0),L(0))\quad\text{is non-zero}, 
$$
with initial data $(v_1(0),v_2(0))$ satisfying the condition
$$
0=L(0)v_1(0)\,-\,a(0)\,v_2(0)\,+\,g_2(0)\,,
$$
we again only need to examine analyticity in a neighborhood of $0$. The latter also follows from estimates of the previous subsection through Picard iteration. Indeed if $(g_1,g_2)$ is holomorphic on $B$, a complex convex neighborhood of zero, the above estimates yield that, setting $(v_1,v_2)=\sum_{N\in\NN} (v_1^{(N)},v_2^{(N)})$ with $(v_1^{(0)},v_2^{(0)})$ constant equal to $(v_1(0),v_2(0))$,
there exists a constant $K$ depending only on coefficients (and blowing up when $\Re(a(0))\to-1$) such that
$$
\max_{z\in B} \frac{|(v_1^{(1)},v_2^{(1)})(z)|}{|z|}\leq K\,\,
\left[\|(v_1(0),v_2(0))\|+\max_{z\in B}|g_1(z)|+\max_{z\in B} \frac{|g_2(z)-g_2(0)|}{|z|}\right]
$$ 
and, for any integer $N\geq1$
$$
\max_{z\in B} \frac{|(v_1^{(N+1)},v_2^{(N+1)})(z)|}{|z|^{N+1}}\leq \frac{K}{N+1}\,\max_{z\in B} \frac{|(v_1^{(N)},v_2^{(N)})(z)|}{|z|^N}
$$
Thus, any integer $N\geq 1$ solutions of \eqref{gensys} satisfy the estimate
$$
\max_{z\in B} \frac{|(v_1^{(N)},v_2^{(N)})(z)|}{|z|^N}\leq \frac{K^N}{N!}\,\,
\left[\|(v_1(0),v_2(0))\|+\max_{z\in B}|g_1(z)|+\max_{z\in B} \frac{|g_2(z)-g_2(0)|}{|z|}\right]\,.
$$ 
This is sufficient to deduce the claimed convergence, providing an analytic solution of \eqref{gensys} provided the $g_j$ are themselves analytic.

\subsection{Application to \eqref{sv_intro}}\label{s:application}

We now demonstrate that the above general theory applies to the spectral problem associated with our roll waves of \eqref{sv_intro}, yielding that local solvability in $H^1$
for the system \eqref{linearizedeq}-\eqref{eigen-normalization} occurs whenever the spectral parameter $\lambda$ satisfies
$$
\Re(\lambda)\,>\,-\frac14\,\frac{F-2}{\sqrt{H_s}}
$$ 
as well as analytic solvability when 
$$
\Re(\lambda)\,>\,-\frac12\,\frac{F-2}{\sqrt{H_s}}\,.
$$ 

We begin by rewriting the inhomogeneous version of \eqref{linearizedeq} as
\begin{align*}
\lambda\,h+(q-ch)'&=f_1\\
\lambda\,q+\left(\left(p'(H)-\frac{Q^2}{H^2}\right)\,h+\left(\frac{2Q}{H}-c\right)\,q\right)'&=\d_hr(H,Q)\,h+\d_qr(H,Q)\,q\,+\,f_2
\end{align*}
where 
\[
p(h)=\frac{h^2}{2F^2}\qquad\textrm{and}\qquad r(h,q)=h-\frac{|q|\,q}{h^2}
\]
and where $f_1$ and $f_2$ are given functions.
By changing the unknown from $(h,q)$ to $(w_1,w_2)=(ch-q,h)$, the above system becomes
\begin{align*}
w_1'&=\lambda w_2-f_1\\
\left(p'(H)-\frac{q_0^2}{H^2}\right)w_2'&=
\left(-\left(p'(H)-\frac{q_0^2}{H^2}\right)'+\d_hr(H,Q)+c\d_qr(H,Q)-\frac{2\lambda\,q_0}{H}\right)w_2\\
&\quad+\left(\lambda-\d_qr(H,Q)-\left(\frac{2q_0}{H}\right)'\right)w_1+f_2+\left(\frac{2q_0}{H}-c\right)f_1
\end{align*}
where we have introduced the constant $q_0$ such that $Q-cH=-q_0$: see \eqref{UHrel}.

Now, let $x_s$ denote the unique point in $(0,X)$ where $H(x_s)=H_s$, where $H_s$ is such that $p'(H_s)-\frac{q_0^2}{H_s^2}=0$.  Recalling \eqref{smooth}, we see that differentiating the profile equation 
$$
\left(p'(H)-\frac{q_0^2}{H^2}\right)\,H'=r(H,Q)\,,\qquad Q-cH=-q_0\,,
$$
and evaluating at $x_s$ yields
$$
-\left(p'(H)-\frac{q_0^2}{H^2}\right)'(x_s)+\d_hr(H(x_s),Q(x_s))+c\d_qr(H(x_s),Q(x_s))\,=\,0\,.
$$
In particular,  factoring 
$$
\left(-p'(H)-\frac{q_0^2}{H^2}\right)(x)\,=\,(x-x_s)\,d(x)
$$
with $d$ nonvanishing on $(0,X)$, dividing the second resolvent equation by $d$, translating $x_s$ to $0$ and defining $(v_1,v_2)$, $(g_1,g_2)$ accordingly, we see the above inhomogeneous
system takes abstract general form \eqref{gensys} with
$$
a(0)\,=\,\frac{1}{d(x_s)}\,\frac{2\lambda\,q_0}{H(x_s)}
\,=\,\frac{2\lambda\,q_0}{H_s\,\left(p''(H_s)+\frac{2q_0^2}{H^3}\right)\,H'(x_s)}
\,=\,\frac{2\lambda\sqrt{H_s}}{F-2}
$$
and
$$
L(0)\,=\,\frac{1}{d(x_s)}\,\left(\lambda-\d_qr(H(x_s),Q(x_s))+\frac{2q_0}{H_s^2}H'(x_s)\right)
\,=\,\frac{F}{F-2}\left(\lambda+\frac{2}{3}\frac{F+1}{\sqrt{H_s}}\right)
$$
where we have used formulas computed in Section~\ref{s:existence}, $q_0=H_s^{3/2}/F$, $c=H_s^{1/2}(1+1/F)$ and $H'(x_s)=F(F-2)/3$. 
Notice that $(a(0),L(0))$ cannot vanish and that, since $F>2$, the conditions $\Re(a(0))>-1/2$ and $\Re(a(0))>-1$ correspond, respectively, to the announced constraints for $H^1$ and analytic solvability.

As an outcome of the above analysis, note that when 
$$
\Re(\lambda)\,>\,-\frac12\,\frac{F-2}{\sqrt{H_s}},
$$
taking $(f_1,f_2)\equiv0$ above implies the set of analytic solutions of \eqref{linearizedeq} can be parametrized by data at the sonic point $(h(x_s),q(x_s))$, which may be chosen arbitrarily provided it satisfies the condition
$$
-\frac{2\lambda\,q_0}{H_s}\,h(x_s)+\left(\lambda-\d_qr(H(x_s),Q(x_s))+\frac{2q_0}{H_s^2}H'(x_s)\right)(ch(x_s)-q(x_s))\,=\,0.
$$
Note the above condition may be written more explicitly as
$$
\left(\lambda\sqrt{H_s}(F-1)+\frac23(F+1)^2\right)\,h(x_s)
-\left(\lambda+\frac{2}{3}\frac{F+1}{\sqrt{H_s}}\right)F\,q(x_s)\,=\,0\,.
$$ 

\section{Computational framework}\label{s:computations}

\subsection{Computational environment} In carrying out our numerical investigations, we have used a Lenovo laptop with 8GB memory and a quad core AMD processor with 1.9GHz processing speed and a 2009 Mac Pro with 16GB memory and two quad-core Intel processors with 2.26 GHz processing speed for coding and debugging. The main parallelized computation is done in the compute nodes of IU Karst, a high-throughput computing cluster. It has $228$ compute nodes. Each node is an IBM NeXtScale nx360 M4 server equipped with two Intel Xeon E5-2650 v2 8-core processors and with 32 GB of RAM and 250 GB of local disk storage. 

\subsection{Computational time}
The following computational times are times elapsed in a single processor of IU Karst.
\begin{table}[htbp]
\centering
\label{table1}
\begin{tabular}{|l|l|l|l|l|l|l|}
\hline
   \diaghead{DDDDDD}
{$\lambda,\xi$}{$F,\underline{H}_-$}      & $3,\underline{H}_{hom}(3)+10^{-5}$ & $3,0.8$ & $8,\underline{H}_{hom}(8)+10^{-5}$ & $8,0.8$ & $16,\underline{H}_{hom}(16)+10^{-5}$ & $16,0.8$ \\ \hline
$0.001,0$ & 0.04s                              & 0.02s   & 0.09s                              & 0.01s   & 0.02s                                & 0.01s    \\ \hline
$1,0$     & 0.08s                              & 0.02s   & 0.14s                              & 0.01s   & 0.08s                                & 0.01s    \\ \hline
$1000,0$  & 6.06s                              & 2.95s   & 2.11s                               & 0.13s   & 1.25s                                & 0.03s    \\ \hline
\end{tabular}
\caption{Times to compute a single Evans-Lopatinsky determinant.}
\end{table}	
\begin{table}[htbp]
\centering
\small
\label{table2}
\begin{tabular}{|l|l|l|l|l|l|l|}
\hline
$F,\underline{H}_-$ & $3,\underline{H}_{hom}(3)+10^{-5}$ & $3,0.8$ & $8,\underline{H}_{hom}(8)+10^{-5}$ & $8,0.8$  & $16,\underline{H}_{hom}(16)+10^{-5}$ & $16,0.8$ \\ \hline
Time                &                                  92974s  &      9055s   &                                 24373s   & 173s  &   14002s  & 112s  \\ \hline
Stability           &                      stable            &       unstable  &                               unstable     & unstable &                             unstable         & unstable \\ \hline
\end{tabular}
\caption{Times using winding numbers to classify points as stable/unstable. In all situations, $r=0.01,R=400$ and $\ell\xi\in[-\pi:\frac{2\pi}{1000}:\pi]$ where the small half circle is divided uniformly into $1000$ pieces, the two straight line segments are each divided uniformly into $2000$ pieces, and the large half circle is divided uniformly into $2000$ pieces. The split Evans-Lopatinsky determinant is then evaluated at $7000$ points on the contour.}
\end{table}

%
%
%
%

\bibliographystyle{alphaabbr}
\bibliography{Ref-roll}

\newcommand{\etalchar}[1]{$^{#1}$}
\begin{thebibliography}{BGMR16}

\bibitem[AeM91]{A}
M.~B. Abd-el Malek.
\newblock Approximate solution of gravity-affected flow from planar sluice gate
  at high {F}roude number.
\newblock {\em J. Comput. Appl. Math.}, 35(1):83--97, 1991.

\bibitem[AGJ90]{AGJ}
J.~Alexander, R.~Gardner, and C.~Jones.
\newblock A topological invariant arising in the stability analysis of
  travelling waves.
\newblock {\em J. Reine Angew. Math.}, 410:167--212, 1990.

\bibitem[AMPZ00]{AMPZ}
A.~Azevedo, D.~Marchesin, B.~J. Plohr, and K.~Zumbrun.
\newblock Long-lasting diffusive solutions for systems of conservation laws.
\newblock {\em Mat. Contemp.}, 18:1--29, 2000.
\newblock VI Workshop on Partial Differential Equations, Part I (Rio de
  Janeiro, 1999).

\bibitem[BM04]{BM}
N.~J. Balmforth and S.~Mandre.
\newblock Dynamics of roll waves.
\newblock {\em J. Fluid Mech.}, 514:1--33, 2004.

\bibitem[BN95]{BN}
D.~E. Bar and A.~A. Nepomnyashchy.
\newblock Stability of periodic waves governed by the modified {K}awahara
  equation.
\newblock {\em Phys. D}, 86(4):586--602, 1995.

\bibitem[Bar14]{B}
B.~Barker.
\newblock Numerical proof of stability of roll waves in the small-amplitude
  limit for inclined thin film flow.
\newblock {\em J. Differential Equations}, 257(8):2950--2983, 2014.

\bibitem[BJN{\etalchar{+}}13]{BJNRZ1}
B.~Barker, M.~A. Johnson, P.~Noble, L.~M. Rodrigues, and K.~Zumbrun.
\newblock Nonlinear modulational stability of periodic traveling-wave solutions
  of the generalized {K}uramoto-{S}ivashinsky equation.
\newblock {\em Phys. D}, 258:11--46, 2013.

\bibitem[BJN{\etalchar{+}}17]{BJNRZ2}
B.~Barker, M.~A. Johnson, P.~Noble, L.~M. Rodrigues, and K.~Zumbrun.
\newblock Stability of viscous {S}t. {V}enant roll waves: from onset to
  infinite {F}roude number limit.
\newblock {\em J. Nonlinear Sci.}, 27(1):285--342, 2017.

\bibitem[BJRZ11]{BJRZ}
B.~Barker, M.~A. Johnson, L.~M. Rodrigues, and K.~Zumbrun.
\newblock Metastability of solitary roll wave solutions of the {S}t. {V}enant
  equations with viscosity.
\newblock {\em Phys. D}, 240(16):1289--1310, 2011.

\bibitem[BGMR16]{BMR}
S.~Benzoni-Gavage, C.~Mietka, and L.~M. Rodrigues.
\newblock Co-periodic stability of periodic waves in some {H}amiltonian {PDE}s.
\newblock {\em Nonlinearity}, 29(11):3241--3308, 2016.

\bibitem[BGNR14]{BNR}
S.~Benzoni-Gavage, P.~Noble, and L.~M. Rodrigues.
\newblock Slow modulations of periodic waves in {H}amiltonian {PDE}s, with
  application to capillary fluids.
\newblock {\em J. Nonlinear Sci.}, 24(4):711--768, 2014.

\bibitem[BGS07]{BenzoniGavage-Serre_multiD_hyperbolic_PDEs}
S.~Benzoni-Gavage and D.~Serre.
\newblock {\em Multidimensional hyperbolic partial differential equations}.
\newblock Oxford Mathematical Monographs. The Clarendon Press, Oxford
  University Press, Oxford, 2007.
\newblock First-order systems and applications.

\bibitem[BL02]{BL02}
A.~Boudlal and V.~Y. Liapidevskii.
\newblock Stability of roll waves in open channel flows.
\newblock {\em Comptes Rendus M\'ecanique}, 330(4):291--295, 2002.

\bibitem[Bro69]{Br1}
R.~R. Brock.
\newblock Development of roll-wave trains in open channels.
\newblock {\em J. Hydraul. Div.}, 95(4):1401--1428, 1969.

\bibitem[Bro70]{Br2}
R.~R. Brock.
\newblock Periodic permanent roll waves.
\newblock {\em J. Hydraul. Div.}, 96(12):2565--2580, 1970.

\bibitem[Cod61]{C}
E.~A. Coddington.
\newblock {\em An introduction to ordinary differential equations}.
\newblock Prentice-Hall Mathematics Series. Prentice-Hall, Inc., Englewood
  Cliffs, N.J., 1961.

\bibitem[Cor34]{C34}
V.~Cornish.
\newblock {\em Ocean waves and kindred geophysical phenomena}.
\newblock Cambridge University Press, Cambridge, 1934.

\bibitem[Dre49]{D49}
R.~F. Dressler.
\newblock Mathematical solution of the problem of roll-waves in inclined open
  channels.
\newblock {\em Comm. Pure Appl. Math.}, 2:149--194, 1949.

\bibitem[Erp62]{E}
J.~J. Erpenbeck.
\newblock Stability of step shocks.
\newblock {\em Phys. Fluids}, 5(5):604--614, 1962.

\bibitem[FM00]{FrancheteauMetivier}
J.~Francheteau and G.~M\'etivier.
\newblock Existence de chocs faibles pour des syst\`emes quasi-lin\'eaires
  hyperboliques multidimensionnels.
\newblock {\em Ast\'erisque}, 268:viii+198, 2000.
\newblock In French.

\bibitem[FSMA03]{FSMA}
B.~Freeze, S.~Smolentsev, N.~Morley, and M.~A. Abdou.
\newblock Characterization of the effect of {F}roude number on surface waves
  and heat transfer in inclined turbulent open channel water flows.
\newblock {\em Int. J. Heat and Mass Transfer}, 46(20):3765--3775, 2003.

\bibitem[Gar93]{G}
R.~A. Gardner.
\newblock On the structure of the spectra of periodic travelling waves.
\newblock {\em J. Math. Pures Appl. (9)}, 72(5):415--439, 1993.

\bibitem[H{\"a}r05]{H03}
J.~M. H{\"a}rterich.
\newblock Existence of rollwaves in a viscous shallow water equation.
\newblock In {\em E{QUADIFF} 2003}, pages 511--516. World Sci. Publ.,
  Hackensack, NJ, 2005.

\bibitem[Hua13]{Huang}
Z.~Huang.
\newblock {\em Open channel flow instabilities: modeling the spatial evolution
  of roll waves}.
\newblock PhD thesis, University of Southern California, 2013.

\bibitem[Jef25]{J25}
H.~Jeffreys.
\newblock The flow of water in an inclined channel of rectangular section.
\newblock {\em Phil. Mag.}, 49(293):793--807, 1925.

\bibitem[JLW05]{JLW}
H.~K. Jenssen, G.~Lyng, and M.~Williams.
\newblock Equivalence of low-frequency stability conditions for
  multidimensional detonations in three models of combustion.
\newblock {\em Indiana Univ. Math. J.}, 54(1):1--64, 2005.

\bibitem[JK00]{JK}
S.~Jin and M.~A. Katsoulakis.
\newblock Hyperbolic systems with supercharacteristic relaxations and roll
  waves.
\newblock {\em SIAM J. Appl. Math.}, 61(1):273--292, 2000.

\bibitem[JNRZ13]{JNRZ-RD-W}
M.~A. Johnson, P.~Noble, L.~M. Rodrigues, and K.~Zumbrun.
\newblock Nonlocalized modulation of periodic reaction diffusion waves: the
  {W}hitham equation.
\newblock {\em Arch. Ration. Mech. Anal.}, 207(2):669--692, 2013.

\bibitem[JNRZ14]{JNRZ13}
M.~A. Johnson, P.~Noble, L.~M. Rodrigues, and K.~Zumbrun.
\newblock Behavior of periodic solutions of viscous conservation laws under
  localized and nonlocalized perturbations.
\newblock {\em Invent. Math.}, 197(1):115--213, 2014.

\bibitem[JNRZ15]{JNRZ12}
M.~A. Johnson, P.~Noble, L.~M. Rodrigues, and K.~Zumbrun.
\newblock Spectral stability of periodic wave trains of the {K}orteweg--de
  {V}ries/{K}uramoto-{S}ivashinsky equation in the {K}orteweg--de {V}ries
  limit.
\newblock {\em Trans. Amer. Math. Soc.}, 367(3):2159--2212, 2015.

\bibitem[KR16]{KR}
B.~Kabil and L.~M. Rodrigues.
\newblock Spectral validation of the {W}hitham equations for periodic waves of
  lattice dynamical systems.
\newblock {\em J. Differential Equations}, 260(3):2994--3028, 2016.

\bibitem[Kra92]{Kr92}
C.~Kranenburg.
\newblock On the evolution of roll waves.
\newblock {\em J. Fluid Mech.}, 245:249--261, 1992.

\bibitem[LWZ12]{LWZ1}
O.~Lafitte, M.~Williams, and K.~Zumbrun.
\newblock The {E}rpenbeck high frequency instability theorem for
  {Z}eldovitch-von {N}eumann-{D}\"oring detonations.
\newblock {\em Arch. Ration. Mech. Anal.}, 204(1):141--187, 2012.

\bibitem[Lax57]{La}
P.~D. Lax.
\newblock Hyperbolic systems of conservation laws. {II}.
\newblock {\em Comm. Pure Appl. Math.}, 10:537--566, 1957.

\bibitem[Maj81]{M}
A.~Majda.
\newblock The existence and stability of multidimensional shock fronts.
\newblock {\em Bull. Amer. Math. Soc. (N.S.)}, 4(3):342--344, 1981.

\bibitem[Maj83a]{Majda_existence_multiD_shock-fronts}
A.~Majda.
\newblock The existence of multidimensional shock fronts.
\newblock {\em Mem. Amer. Math. Soc.}, 43(281):v+93, 1983.

\bibitem[Maj83b]{Majda_stability_multiD_shock-fronts}
A.~Majda.
\newblock The stability of multidimensional shock fronts.
\newblock {\em Mem. Amer. Math. Soc.}, 41(275):iv+95, 1983.

\bibitem[NM84]{NM}
D.~J. Needham and J.~H. Merkin.
\newblock On roll waves down an open inclined channel.
\newblock {\em Proc. Roy. Soc. London Ser. A}, 394(1807):259--278, 1984.

\bibitem[Nob03]{N1}
P.~Noble.
\newblock {\em M\'ethodes de vari\'et\'es invariantes pour les \'equations de
  Saint Venant et les syst\`emes hamiltoniens discrets}.
\newblock PhD thesis, Universit\'e Paul Sabatier Toulouse 3, 2003.
\newblock In French.

\bibitem[Nob06]{N2}
P.~Noble.
\newblock On the spectral stability of roll-waves.
\newblock {\em Indiana Univ. Math. J.}, 55(2):795--848, 2006.

\bibitem[Nob09]{N3}
P.~Noble.
\newblock Persistence of roll waves for the {S}aint {V}enant equations.
\newblock {\em SIAM J. Math. Anal.}, 40(5):1783--1814, 2008/09.

\bibitem[NR13]{NR}
P.~Noble and L.~M. Rodrigues.
\newblock Whitham's modulation equations and stability of periodic wave
  solutions of the {K}orteweg-de {V}ries-{K}uramoto-{S}ivashinsky equation.
\newblock {\em Indiana Univ. Math. J.}, 62(3):753--783, 2013.

\bibitem[OZ03]{OZ1}
M.~Oh and K.~Zumbrun.
\newblock Stability of periodic solutions of conservation laws with viscosity:
  analysis of the {E}vans function.
\newblock {\em Arch. Ration. Mech. Anal.}, 166(2):99--166, 2003.

\bibitem[RG12]{RG1}
G.~L. Richard and S.~L. Gavrilyuk.
\newblock A new model of roll waves: comparison with brock’s experiments.
\newblock {\em J. Fluid Mech.}, 698:374--405, 2012.

\bibitem[RG13]{RG2}
G.~L. Richard and S.~L. Gavrilyuk.
\newblock A new model of roll waves: comparison with brock’s experiments.
\newblock {\em J. Fluid Mech.}, 725:492--521, 2013.

\bibitem[Rod13]{R_HDR}
L.~M. Rodrigues.
\newblock {\em Asymptotic stability and modulation of periodic wavetrains.
  General theory \& applications to thin film flows}.
\newblock Habilitation {\`a} diriger des recherches, Universit\'e Lyon 1, 2013.

\bibitem[Rod15]{R_Roscoff}
L.~M. Rodrigues.
\newblock Space-modulated stability and averaged dynamics.
\newblock {\em Journ\'ees \'Equations aux d\'eriv\'ees partielles}, pages
  1--15, 6 2015.

\bibitem[{Rod}17]{R_linearKdV}
L.~M. {Rodrigues}.
\newblock {Linear Asymptotic Stability and Modulation Behavior near Periodic
  Waves of the Korteweg-de Vries Equation}.
\newblock {\em arXiv preprint}, arxiv-1706.05752, June 2017.

\bibitem[RZ16]{RZ}
L.~M. Rodrigues and K.~Zumbrun.
\newblock Periodic-coefficient damping estimates, and stability of
  large-amplitude roll waves in inclined thin film flow.
\newblock {\em SIAM J. Math. Anal.}, 48(1):268--280, 2016.

\bibitem[Ser99]{Serre-conservation-I}
D.~Serre.
\newblock {\em Systems of conservation laws. 1}.
\newblock Cambridge University Press, Cambridge, 1999.
\newblock Hyperbolicity, entropies, shock waves, Translated from the 1996
  French original by I. N. Sneddon.

\bibitem[Ser05]{Se}
D.~Serre.
\newblock Spectral stability of periodic solutions of viscous conservation
  laws: large wavelength analysis.
\newblock {\em Comm. Partial Differential Equations}, 30(1-3):259--282, 2005.

\bibitem[Smo83]{Sm}
J.~Smoller.
\newblock {\em Shock waves and reaction-diffusion equations}, volume 258 of
  {\em Grundlehren der Mathematischen Wissenschaften [Fundamental Principles of
  Mathematical Science]}.
\newblock Springer-Verlag, New York-Berlin, 1983.

\bibitem[TT79]{TT}
K.~Tamada and H.~Tougou.
\newblock Stability of roll-waves on thin laminar flow down an inclined plane
  wall.
\newblock {\em J. Phys. Soc. Japan}, 47(6):1992--1998, 1979.

\bibitem[Tou80]{T}
H.~Tougou.
\newblock Stability of turbulent roll-waves in an inclined open channel.
\newblock {\em J. Phys. Soc. Japan}, 48(3):1018--1023, 1980.

\bibitem[Whi74]{W}
G.~B. Whitham.
\newblock {\em Linear and nonlinear waves}.
\newblock Wiley-Interscience [John Wiley \& Sons], New York-London-Sydney,
  1974.
\newblock Pure and Applied Mathematics.

\bibitem[YK92]{YK}
J.~Yu and J.~Kevorkian.
\newblock Nonlinear evolution of small disturbances into roll waves in an
  inclined open channel.
\newblock {\em J. Fluid Mech.}, 243:575--594, 1992.

\bibitem[YKH00]{YKH}
J.~Yu, J.~Kevorkian, and R.~Haberman.
\newblock Weak nonlinear long waves in channel flow with internal dissipation.
\newblock {\em Stud. Appl. Math.}, 105(2):143--163, 2000.

\bibitem[Zum01]{Z3}
K.~Zumbrun.
\newblock Multidimensional stability of planar viscous shock waves.
\newblock In {\em Advances in the theory of shock waves}, volume~47 of {\em
  Progr. Nonlinear Differential Equations Appl.}, pages 307--516. Birkh\"auser
  Boston, Boston, MA, 2001.

\bibitem[Zum11]{Z1}
K.~Zumbrun.
\newblock Stability of detonation profiles in the {ZND} limit.
\newblock {\em Arch. Ration. Mech. Anal.}, 200(1):141--182, 2011.

\bibitem[Zum12]{Z2}
K.~Zumbrun.
\newblock High-frequency asymptotics and one-dimensional stability of
  {Z}el'dovich--von {N}eumann--{D}\"oring detonations in the small-heat release
  and high-overdrive limits.
\newblock {\em Arch. Ration. Mech. Anal.}, 203(3):701--717, 2012.

\end{thebibliography}

\end{document}